\numberwithin{theorem}{section}
\crefname{assumption}{Assumption}{Assumptions}
\crefname{remark}{Remark}{Remarks}
\title{Additive Schwarz Methods for Convex Optimization as Gradient Methods\thanks{Submitted to the editors DATE.
\funding{This research was supported by Basic Science Research Program through the National Research Foundation of Korea~(NRF) funded by the Ministry of Education~(2019R1A6A3A01092549).}
}}
\author{Jongho Park\thanks{Department of Mathematical Sciences, KAIST, Daejeon 34141, Korea 
  (\email{jongho.park@kaist.ac.kr}, \url{https://sites.google.com/view/jonghopark}).}}
\newcommand\gap{\hspace{0.1cm}}
\newcommand{\bu}{\mathbf{u}}
\newcommand{\bv}{\mathbf{v}}
\newcommand{\bw}{\mathbf{w}}
\newcommand{\bS}{\mathbf{S}}
\newcommand{\T}{\mathcal{T}}
\newcommand{\tF}{\tilde{F}}
\newcommand{\un}{u^{(n)}}
\newcommand{\unn}{u^{(n+1)}}
\newcommand{\Rw}{R_k^* w_k}
\newcommand{\dkn}{d_k^{(n)}}
\newcommand{\dkns}{d_k^{(n)*}}
\newcommand{\M}{M_{\tau, \omega}}
\newcommand{\Mn}{M_{\tau, \omega}^{(n)}}
\newcommand{\kASM}{\kappa_{\textrm{ASM}}}
\newcommand{\intO}{\int_{\Omega}}
\newcommand{\sumk}{\sum_{k=1}^{N}}
\newcommand{\minu}{\min_{u \in V}}
\newcommand{\mint}{\min_{t \in [0,\theta]}}
\let\div\relax
\DeclareMathOperator{\div}{div}
\DeclareMathOperator{\dom}{dom}
\DeclareMathOperator*{\argmin}{\arg\min}
\newcommand{\BigBox}{\vcenter{\hbox{$\mathlarger{\mathlarger{\mathlarger{\mathlarger{\Box}}}}$}}}
\DeclareMathOperator*{\bigsquare}{\BigBox}
\newcommand{\BigBoxInLine}{\vcenter{\hbox{$\mathlarger{\mathlarger{\mathlarger{\Box}}}$}}}
\DeclareMathOperator*{\bigsquareinline}{\BigBoxInLine}
\begin{document}

\maketitle

\begin{abstract}
This paper gives a unified convergence analysis of additive Schwarz methods for general convex optimization problems.
Resembling the fact that additive Schwarz methods for linear problems are preconditioned Richardson methods, we prove that additive Schwarz methods for general convex optimization are in fact gradient methods.
Then an abstract framework for convergence analysis of additive Schwarz methods is proposed.
The proposed framework applied to linear elliptic problems agrees with the classical theory.
We present applications of the proposed framework to various interesting convex optimization problems such as nonlinear elliptic problems, nonsmooth problems, and nonsharp problems.
\end{abstract}

\begin{keywords}
additive Schwarz method, gradient method, convex optimization, convergence analysis, domain decomposition method
\end{keywords}

\begin{AMS}
65N55, 65J15, 65K15, 90C25
\end{AMS}

\section{Introduction}
\label{Sec:Introduction}
Many modern iterative methods such as block relaxation methods, multigrid methods, and domain decomposition methods for linear systems belong to Schwarz methods, also known as subspace correction methods.
Because of this fact, constructing an abstract convergence theory for Schwarz methods has been considered an important task in the field of numerical analysis.
There is a vast literature on the convergence theory of Schwarz methods for linear systems.
The paper~\cite{Xu:1992} by Xu contains an outstanding survey on some early results on Schwarz methods.
Several variants of the convergence theory with various viewpoints were proposed in, e.g.,~\cite{FS:2001,GO:1995,XZ:2002}.
For a modern representation of the abstract convergence theory with historical remarks, one may refer to the monograph~\cite{TW:2005} by Toselli and Widlund.

While the convergence theory of Schwarz methods for linear elliptic problems seems to be almost complete, there has still been much research on convergence analysis of Schwarz methods for nonlinear and nonsmooth problems.
The papers~\cite{TE:1998,TX:2002} are important early results on Schwarz methods for nonlinear problems.
In~\cite{BTW:2003,BW:2000,Tai:2003}, Schwarz methods for variational inequalities which arise in quadratic optimization with constraints were proposed.
Convergence analysis for Schwarz methods was successfully extended to nonquadratic and nonsmooth variational inequalities in~\cite{Badea:2006} and~\cite{BK:2012}, respectively.
Recently, overlapping Schwarz methods for convex optimization problems lacking strong convexity were proposed in~\cite{CTWY:2015,Park:2019}, especially for total variation minimization problems arising in mathematical imaging.
On the other hand, it was shown in~\cite{LN:2017} that Schwarz methods may not converge to a correct solution in the case of nonsmooth convex optimization.

One of the most important observations done in the convergence theory of Schwarz methods for linear problems is that Schwarz methods can be viewed as preconditioned Richardson methods; see, e.g.,~\cite{TW:2005}.
This observation makes the convergence analysis of a method fairly simple; convergence is obvious by the well-known convergence results on the Richardson method and one only need to estimate the condition number of the linear system to obtain an estimate for the convergence rate.
However, such an observation does not exist for general nonlinear and nonsmooth problems.
Due to this situation, all of the aforementioned works on nonlinear problems provided proofs on why their methods converge to a solution correctly with some complex computations.
To the best of our knowledge, the only relevant result on nonlinear problems is~\cite{LP:2019b}; it says that block Jacobi methods for a constrained quadratic optimization problem can be regarded as preconditioned forward-backward splitting algorithms~\cite{BT:2009}.

In this paper, we show that additive Schwarz methods for general convex optimization can be represented as gradient methods.
In the field of mathematical optimization, there has been much research on gradient methods for solving convex optimization problems; for example, see~\cite{BT:2009,CP:2016,Nesterov:2013}.
Therefore, by observing that additive Schwarz methods are interpreted as gradient methods, we can borrow many valuable tools on convergence analysis from the field of mathematical optimization in order to analyze Schwarz methods.
Consequently, we propose a novel abstract convergence theory of additive Schwarz methods for convex optimization.
The proposed framework directly generalizes the classical convergence theory presented in~\cite{TW:2005} for linear elliptic problems to general convex optimization problems.
We also highlight that our framework gives a better convergence rate than existing works~\cite{Badea:2006,BK:2012,TX:2002} for some applications.

Various applications of the proposed convergence theory are presented in this paper.
A very broad range of convex optimization problems fits into our framework.
In particular, we provide examples of nonlinear elliptic problems, nonsmooth problems, and problems without sharpness, where those classes of problems were considered in existing works~\cite{TX:2002}, \cite{BTW:2003,Tai:2003}, \cite{BK:2012}, and~\cite{CTWY:2015,Park:2019}, respectively.

The rest of this paper is organized as follows.
In \cref{Sec:Pre}, we provide some useful tools of convex analysis required in this paper.
An abstract gradient method for solving general convex optimization is introduced in \cref{Sec:Gradient} with the convergence analysis motivated by~\cite{Nesterov:2013}.
In \cref{Sec:ASM}, we show that additive Schwarz methods for convex optimization are indeed gradient methods; a novel abstract convergence theory for additive Schwarz methods is proposed in this viewpoint.
One- and two-level overlapping domain decomposition settings and some important stable decomposition estimates are summarized in \cref{Sec:DD}.
Applications of the proposed convergence theory to various convex optimization problems are presented in \cref{Sec:Applications}.
We conclude this paper with remarks in~\cref{Sec:Conclusion}.

\section{Preliminaries}
\label{Sec:Pre}
In this section, we introduce notation and basic notions of convex analysis that will be used throughout the paper.

Let $V$ be a reflexive Banach space equipped with the norm $\| \cdot \|_V$.
The topological dual space of $V$ is denoted by $V^*$, and $\left< \cdot , \cdot \right>_{V^* \times V}$ denotes the duality pairing  of $V$, i.e.,
\begin{equation*}
\left< p, u \right>_{V^* \times V} = p(u), \quad u \in V, \gap p \in V^*.
\end{equation*}
We may omit the subscripts if there is no ambiguity.
We denote the collection of proper, convex, lower semicontinuous functionals from $V$ to $\overline{\mathbb{R}}$ by $\Gamma_0 (V)$.

The \textit{effective domain} of a proper functional $F$:~$V \rightarrow \overline{\mathbb{R}}$ is denoted by $\dom F$, i.e.,
\begin{equation*}
\dom F = \left\{ u \in V : F(u) < \infty \right\}.
\end{equation*}
For example, for a subset $K$ of $V$, its \textit{characteristic function} $\chi_K$:~$V \rightarrow \overline{\mathbb{R}}$ defined by
\begin{equation}
\label{chi}
\chi_K (u) = \begin{cases} 0 & \textrm{ if } u \in K, \\ \infty & \textrm{ if } u \not\in K \end{cases}
\end{equation}
has the effective domain $\dom \chi_K = K$.

A functional $F$:~$V \rightarrow \overline{\mathbb{R}}$ is said to be \textit{coercive} if
\begin{equation*}
F(u) \rightarrow \infty \quad\textrm{as}\quad \| u \| \rightarrow \infty.
\end{equation*}
If $F \in \Gamma_0 (V)$ is coercive, then the minimization problem
\begin{equation}
\label{min_F}
\min_{u \in V} F(u)
\end{equation}
has a solution $u^* \in V$ with $F(u^*) > -\infty$~\cite[Proposition~11.14]{BC:2011}.
If we further assume that $F$ is strictly convex, then the solution of~\cref{min_F} is unique.

For a convex functional $F$:~$V \rightarrow \overline{\mathbb{R}}$, the \textit{subdifferential} of $F$ at a point $u \in V$ is defined as
\begin{equation*}
\partial F(u) = \left\{ p \in V^* : F(v) \geq F(u) + \left< p , v- u \right> \quad \forall v \in V \right\}.
\end{equation*}
If $F$ is Frech\'{e}t differentiable at $u$, then the subdifferential $\partial F (u)$ agrees with the Frech\'{e}t derivative $F'(u)$, i.e., $\partial F(u) = \{ F'(u) \}$.
It is clear from the definition of subdifferential that $u^* \in V$ is a global minimizer of $F$ if and only if $0 \in \partial F(u^*)$.

If $F_k$:~$V \rightarrow \overline{\mathbb{R}}$, $1 \leq k \leq N$ are proper convex functionals, one can obtain directly from the definition of subdifferential that
\begin{equation}
\label{subdifferential1}
\partial \left( \sumk F_k \right) (u) \supseteq \sumk \partial F_k (u), \quad u \in V.
\end{equation} 
We have a similar result on the composition with a linear operator; let $W$ be a reflexive Banach space.
For a proper convex functional $F$:~$V \rightarrow \overline{\mathbb{R}}$ and a bounded linear functional $A$:~$W \rightarrow V$, one can show that
\begin{equation}
\label{subdifferential2}
\partial (F \circ A)(w) \supseteq A^* \partial F (Aw), \quad w \in W.
\end{equation}

The \textit{Legendre--Fenchel conjugate} $F^*$:~$V^* \rightarrow \overline{\mathbb{R}}$ of a functional $F$:~$V \rightarrow \overline{\mathbb{R}}$ is defined by
\begin{equation*}
F^*(p) = \sup_{u \in V} \left\{ \left< p, u \right> - F(u) \right\}.
\end{equation*}
Clearly, $F^*$ is convex lower semicontinuous regardless of whether $F$ is.
If we further assume that $F \in \Gamma_0 (V)$, then $\partial F$ and $\partial F^*$ are inverses of each other~\cite[Theorem~16.23]{BC:2011}, i.e., we have
\begin{equation}
\label{Legendre}
p \in \partial F(u) \gap \Leftrightarrow \gap u \in \partial F^* (p), \quad u \in V, \gap p \in V^*.
\end{equation}

For convex functionals $F_k$, $1 \leq k \leq N$, defined on $V$, the \textit{infimal convolution} of $F_k$ is given by
\begin{equation*}
\left( \bigsquare_{k=1}^N F_k \right) (v) = \inf \left\{ \sumk F_k (v_k ) : v = \sumk v_k, \gap v_k \in V \right\}.
\end{equation*}
It is easy to check that $\bigsquareinline_{k=1}^N F_k$ is convex.
If each $F_k$ is in $\Gamma_0 (V)$ and coercive, then we have $\bigsquareinline_{k=1}^N F_k \in \Gamma_0 (V)$~\cite[Proposition~12.14]{BC:2011}.

For another reflexive Banach space $W$ and a bounded linear operator $A$:~$V \rightarrow W$, the \textit{infimal postcomposition} $A \triangleright F$:~$W \rightarrow \overline{\mathbb{R}}$ of a convex functional $F$:~$V \rightarrow \overline{\mathbb{R}}$ by $A$ is given by
\begin{equation*}
\left( A \triangleright F \right)(w) = \inf \left\{ F(v) : Av = w, \gap v \in V \right\}.
\end{equation*}
If there does not exist $v \in V$ such that $Av = w$, then we set $(A \triangleright F)(w) = \infty$.
One can show that $A \triangleright F$ is also convex~\cite[Proposition~12.34]{BC:2011}.
If the adjoint $A^*$:~$W^* \rightarrow V^*$ of $A$ is surjective and $F \in \Gamma_0 (V)$, then we get $A \triangleright F \in \Gamma_0 (W)$~\cite[Lemma~2.6]{BC:2013}.
We have the following formulas for the convex conjugates for infimal convolution and infimal postcomposition~\cite[Proposition~13.21]{BC:2011}:
\begin{equation}
\label{dual_inf}
\left( \bigsquare_{k=1}^N F_k \right)^* = \sumk F_k^* \quad \textrm{and} \quad
(A \triangleright F)^* = F^* \circ A^*.
\end{equation}
We state a useful identity on infimal convolution and infimal postcomposition in \cref{Lem:infimal}, whose proof will be given in \cref{App:proof_infimal}.

\begin{lemma}
\label{Lem:infimal}
For a positive integer $N$, let $W_k$, $1 \leq k \leq N$, and $W$ be real vector spaces.
For linear operators $A_k$:~$W_k \rightarrow W$ and functionals $F_k$:~$W_k \rightarrow \overline{\mathbb{R}}$,  the following is satisfied:
\begin{equation*}
\left( \bigsquare_{k=1}^N (A_k \triangleright F_k) \right) (w) = \inf \left\{ \sumk F_k (w_k) : w = \sumk A_k w_k, \gap w_k \in W_k \right\}, \quad w \in W.
\end{equation*}
\end{lemma}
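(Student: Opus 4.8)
The plan is to unfold the two defining infima appearing on the left-hand side and then collapse everything into a single joint infimum over the product space $W_1 \times \cdots \times W_N$. Writing $G_k = A_k \triangleright F_k$ for brevity, the definition of infimal convolution gives
\[
\left( \bigsquare_{k=1}^N G_k \right)(w) = \inf \left\{ \sumk G_k(v_k) : w = \sumk v_k, \gap v_k \in W \right\},
\]
and the definition of infimal postcomposition unfolds each summand as $G_k(v_k) = \inf \{ F_k(w_k) : A_k w_k = v_k, \gap w_k \in W_k \}$, with the convention $G_k(v_k) = \infty$ whenever $v_k$ lies outside the range of $A_k$. Substituting the latter into the former exhibits the left-hand side as a nested infimum: an outer infimum over decompositions $(v_k)$ of $w$, and, for each fixed decomposition, a sum of independent inner infima over preimages $w_k$ of $v_k$.

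The key step is the elementary identity that a finite sum of independent infima equals the infimum of the sum,
\[
\sumk \inf_{w_k \in S_k} F_k(w_k) = \inf \left\{ \sumk F_k(w_k) : w_k \in S_k, \gap 1 \leq k \leq N \right\},
\]
applied with $S_k = \{ w_k \in W_k : A_k w_k = v_k \}$. This merges the inner infima into a single infimum over tuples $(w_k)$ constrained by $A_k w_k = v_k$ for every $k$. I then eliminate the auxiliary variables $(v_k)$: the constraints $A_k w_k = v_k$ determine each $v_k$ from $w_k$, so the outer constraint $\sumk v_k = w$ becomes $\sumk A_k w_k = w$. Hence the nested infimum reduces to the single infimum $\inf \{ \sumk F_k(w_k) : w = \sumk A_k w_k, \gap w_k \in W_k \}$, which is exactly the right-hand side.

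I expect the only real difficulty to be the bookkeeping forced by the extended real line, since the $F_k$ may take the values $\pm \infty$ and a preimage set $S_k$ may be empty. To keep every manipulation valid I would fix the usual addition convention on $\overline{\mathbb{R}}$ and verify the sum-of-infima identity directly by two inequalities: reading it as $(\text{sum of infima}) = (\text{infimum of sum})$, the inequality $\leq$ is immediate from $F_k(w_k) \geq \inf_{S_k} F_k$, summed over $k$ and then infimized over $(w_k)$, while the inequality $\geq$ follows by selecting, for each $k$ with finite infimum, a near-optimal $w_k$ and summing the near-optimal values (handling the $-\infty$ case separately). An empty $S_k$ makes the corresponding inner infimum $+\infty$, consistently with the postcomposition convention, so such decompositions simply do not lower the outer infimum and cause no trouble. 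Because no subtractions ever appear, a single fixed convention suffices throughout, and no topological or convexity hypotheses on the $F_k$ are required.
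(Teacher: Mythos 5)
Your proposal is correct, and it organizes the argument differently from the paper. The paper proves the two inequalities between the two sides head-on: ``$\leq$'' follows by bounding $\sum_{k=1}^N F_k(w_k) \geq \sum_{k=1}^N (A_k \triangleright F_k)(A_k w_k)$ for any admissible tuple, and ``$\geq$'' is established by a case analysis on the value $\Delta$ of the infimal convolution ($\Delta$ finite, $\Delta = +\infty$, $\Delta = -\infty$), using near-optimal selections at tolerances $\epsilon/2$ and $\epsilon/(2N)$ in the finite case and, when $\Delta = -\infty$, an $M$-argument that splits the indices into the set $\mathcal{I}_1$ where $(A_k \triangleright F_k)(w^k)$ is real and the set $\mathcal{I}_2$ where it equals $-\infty$. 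You instead factor the statement through two general principles --- a finite sum of independent infima equals the infimum of the sum, and an iterated infimum over nested constraints collapses to a joint infimum --- so that all extended-real bookkeeping is localized in one auxiliary identity, whose proof contains the same $\epsilon$-selection and separate $-\infty$ treatment that the paper carries out globally. Your version is more modular and the collapse step is convention-free; the paper's version avoids stating an auxiliary lemma but pays with the explicit case analysis on $\Delta$.

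One caveat: your closing remark that ``a single fixed convention suffices'' is too strong. Both your auxiliary identity and the lemma itself require the minimization convention $(+\infty) + (-\infty) = +\infty$. Indeed, take $N = 2$, $W_1 = W_2 = W = \mathbb{R}$, $A_1 = 0$, $A_2 = \mathrm{id}$, $F_1(x) = x$, and $F_2 \equiv +\infty$: the right-hand side is $+\infty$ because $F_1$ never attains the value $-\infty$, while the left-hand side contains the term $(A_1 \triangleright F_1)(0) + (A_2 \triangleright F_2)(w) = (-\infty) + (+\infty)$, which equals $-\infty$ under the opposite convention. Since you commit to ``the usual'' (upper) convention --- the same one the paper uses implicitly when it asserts that $\mathcal{I}_1 \cup \mathcal{I}_2$ exhausts all indices --- this is a point to make precise rather than a gap in your argument.
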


For a convex and Frech\'{e}t differentiable functional $F$:~$V \rightarrow \mathbb{R}$, the \textit{Bregman distance} of $F$ is defined by
\begin{equation*}
D_F (u , v) = F(u) - F(v) - \left< F'(v), u-v \right>, \quad u,v \in V.
\end{equation*}
Note that $D_F$ is convex and Frech\'{e}t differentiable with respect to its first argument, i.e., for fixed $v \in V$ the map $u \mapsto D_F (u, v)$ is Frech\'{e}t differentiable and convex.

\begin{remark}
\label{Rem:Hilbert}
Although the results in the references~\cite{BC:2011,BC:2013} we cited in this section are stated in the Hilbert space setting, they are still valid for reflexive Banach spaces.
Two main properties of Hilbert spaces used in~\cite{BC:2011,BC:2013} are the weak compactness of closed bounded sets and the equivalence between the strong and weak lower semicontinuity of convex functions, and they are also true for reflexive Banach spaces.
\end{remark}

\section{Gradient methods}
\label{Sec:Gradient} 
In this section, we propose an abstract gradient method that generalizes several existing first order methods for convex optimization.
As we will see in~\cref{Sec:ASM}, conventional additive Schwarz methods for convex optimization are interpreted as abstract gradient methods.
Therefore, the abstract gradient method and its convergence proof shall be very useful in the analysis of additive Schwarz methods.

Throughout this section, let $V$ be a reflexive Banach space.
We consider the following model problem:
\begin{equation}
\label{model_gradient}
\minu \left\{ E(u):= F(u) + G(u) \right\},
\end{equation}
where $F$:~$V\rightarrow \mathbb{R}$ is a Frech\'{e}t differentiable convex function and $G \in \Gamma_0 (V)$ is possibly nonsmooth.
We further assume that $E$ is coercive, so that a solution $u^* \in V$ of~\cref{model_gradient} exists.
The optimality condition of $u^*$ reads as
\begin{equation*}
F'(u^*) + \partial G(u^*) \ni 0,
\end{equation*}
or equivalently,
\begin{equation}
\label{optimality}
\left< F'(u^*) , u - u^* \right> + G(u) - G(u^* ) \geq 0, \quad u \in V.
\end{equation}

Let $B$:~$V \times V \rightarrow \overline{\mathbb{R}}$ be a functional which is proper, convex, and lower semicontinuous with respect to its first argument.
In addition, we assume that $B$ satisfies the following.

\begin{assumption}
\label{Ass:gradient}
There exists constants $q > 1$ and $\theta \in (0, 1]$ such that for any bounded and convex subset $K$ of $V$, we have 
\begin{multline*}
D_F (u, v) + G(u) \leq B (u, v) \\
\leq \frac{L_K}{q} \| u - v \|^q + \theta G \left( \frac{1}{\theta}u - \left( \frac{1}{\theta} - 1 \right) v \right) + (1 - \theta ) G(v) , \quad u,v \in K \cap \dom G,
\end{multline*}
where $L_K$ is a positive constant depending on $K$.
\end{assumption}

With the functional $B$ satisfying \cref{Ass:gradient}, the abstract gradient method for~\cref{model_gradient} is presented in \cref{Alg:gradient}.

\begin{algorithm}[]
\caption{Abstract gradient method for~\cref{model_gradient}}
\begin{algorithmic}[]
\label{Alg:gradient}
\STATE Choose $u^{(0)} \in \dom G$.
\FOR{$n=0,1,2,\dots$}
\item \vspace{-0.5cm}\begin{equation*}
\unn \in \argmin_{u \in V} \left\{ Q(u, \un):= F(\un) + \langle F'(\un), u - \un \rangle + B(u, \un ) \right\}
\end{equation*}\vspace{-0.4cm}
\ENDFOR
\end{algorithmic}
\end{algorithm}

Several fundamental first order methods for~\cref{model_gradient} can be represented as examples of \cref{Alg:gradient}.
Under the assumption that $F'$ is Lipschitz continuous with modulus $M > 0$, setting
\begin{equation*}
B(u,v) = \frac{1}{2\tau}\|u - v\|^2 + G(u)
\end{equation*}
for $\tau \in (0, 1/M]$ satisfies \cref{Ass:gradient} with $q = 2$, $\theta = 1$, $L_{K} = 1/\tau$ and yields the forward-backward splitting algorithm~\cite{BT:2009} for~\cref{model_gradient}.
If we further assume that $G = 0$, then it reduces to the classical fixed-step gradient descent method.

First, we claim that the energy of \cref{Alg:gradient} always decreases under \cref{Ass:gradient} in the following lemma; the proof will be given in \cref{App:proof_decreasing}.

\begin{lemma}
\label{Lem:decreasing}
Suppose that \cref{Ass:gradient} holds.
In \cref{Alg:gradient}, the sequence $\{ E(\un )\}$ is decreasing.
\end{lemma}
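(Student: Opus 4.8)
The plan is to recognize \cref{Alg:gradient} as a majorization--minimization scheme: the function $u \mapsto Q(u, \un)$ is a surrogate that majorizes $E$ everywhere yet coincides with $E$ at the current iterate $\un$. Granting these two facts, the energy decrease is immediate, since
\[
E(\unn) \leq Q(\unn, \un) \leq Q(\un, \un) = E(\un),
\]
where the middle inequality is just the defining minimality of $\unn$ in \cref{Alg:gradient}.

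First I would establish tightness at the base point, namely $B(v,v) = G(v)$ for every $v \in \dom G$, by sandwiching with \cref{Ass:gradient} evaluated at $u = v$. The lower bound gives $D_F(v,v) + G(v) \leq B(v,v)$, and since $D_F(v,v) = 0$ this reads $G(v) \leq B(v,v)$; the upper bound gives $B(v,v) \leq \frac{L_K}{q}\|v-v\|^q + \theta G(v) + (1-\theta)G(v) = G(v)$, because $\frac{1}{\theta}v - \left(\frac{1}{\theta}-1\right)v = v$ and $\|v-v\|^q = 0$. Hence $B(v,v) = G(v)$, and consequently $Q(\un, \un) = F(\un) + B(\un, \un) = E(\un)$.

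Next I would prove the majorization $E(u) \leq Q(u, \un)$, which is nothing but a rearrangement of the lower bound in \cref{Ass:gradient}. Expanding $D_F(u, \un)$ in the inequality $D_F(u,\un) + G(u) \leq B(u, \un)$ and adding $F(\un) + \langle F'(\un), u - \un \rangle$ to both sides turns the left-hand side into $E(u)$ and the right-hand side into $Q(u, \un)$. Applying this at $u = \unn$ yields $E(\unn) \leq Q(\unn, \un)$, completing the chain displayed above.

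The point requiring care---rather than a genuine obstacle---is the bookkeeping of effective domains and the choice of the bounded convex set $K$ to which \cref{Ass:gradient} is applied. I would argue inductively that each iterate lies in $\dom G$: assuming $\un \in \dom G$, minimality forces $Q(\unn, \un) \leq Q(\un, \un) = E(\un) < \infty$, and since the lower bound gives $G(\unn) \leq B(\unn, \un) - D_F(\unn, \un)$ with $D_F$ finite (as $F$ is real-valued), we conclude $\unn \in \dom G$; the base case is the prescribed $u^{(0)} \in \dom G$. With both $\un$ and $\unn$ in $\dom G$, I would take $K$ to be any bounded convex set containing them (for instance the segment joining them), so that every invocation of \cref{Ass:gradient} above is legitimate.
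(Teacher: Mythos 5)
Your proposal is correct and follows essentially the same route as the paper: the paper's proof is precisely your majorization--minimization chain $E(\unn) \le Q(\unn,\un) \le Q(\un,\un) = F(\un) + B(\un,\un) \le E(\un)$, with the first inequality from the lower bound in \cref{Ass:gradient}, the second from the minimality of $\unn$, and the last from the upper bound evaluated at $u = v = \un$. Your additions are only refinements of bookkeeping---you prove the tightness $B(v,v) = G(v)$ as an equality where the paper only needs one direction, and you spell out the choice of $K$ and the $\dom G$ induction, which the paper dispatches with ``it is obvious that there exists a bounded and convex subset $K$ of $V$ such that $\un, \unn \in K$.''
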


We note that $E (u^{(0)}) < \infty$ because $u^{(0)} \in \dom G$.
By \cref{Lem:decreasing} and the coercivity of $E$, the sequence $\{ \un \}$ generated by \cref{Alg:gradient} is contained in the bounded set
\begin{equation}
\label{K0}
K_0 = \left\{ u \in V : E(u) \leq E(u^{(0)}) \right\}.
\end{equation}
Clearly, $K_0$ is convex and $K_0 \subseteq \dom G$.
Since $K_0$ is bounded, there exists a constant $R_0 > 0$ such that
\begin{equation}
\label{R0}
K_0 \subseteq \left\{ u \in V : \| u - u^* \| \leq R_0 \right\}.
\end{equation}
In what follows, we omit the subscript $K_0$ from $L_{K_0}$ and write $L = L_{K_0}$.

We describe the convergence behavior of \cref{Alg:gradient}.
Although \cref{Alg:gradient} is written in a fairly general fashion, its convergence analysis can be done in a similar way to the vanilla gradient method described in~\cite{Nesterov:2013}.
The proof of the following convergence theorem for \cref{Alg:gradient} can be found in \cref{App:proof_gradient}.

\begin{theorem}
\label{Thm:gradient}
Suppose that \cref{Ass:gradient} holds.
In \cref{Alg:gradient}, if $E(u^{(0)}) - E(u^*) \geq \theta^{q-1} LR_0^q$, then
\begin{equation*}
E(u^{(1)}) - E(u^*) \leq \left( 1 - \theta \left( 1- \frac{1}{q} \right)\right) ( E(u^{(0)}) - E(u^*) ).
\end{equation*}
Otherwise, we have
\begin{equation*}
E(u^{(n)}) - E(u^*) \leq \frac{C_{q,\theta} L R_0^q}{(n+1)^{q-1}}, \quad n \geq 0,
\end{equation*}
where $C_{q,\theta}$ is a positive constant defined in~\cref{Cq} depending on $q$ and $\theta$ only, and $R_0$ was defined in~\cref{R0}.
\end{theorem}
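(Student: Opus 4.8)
The plan is to follow the template of Nesterov's gradient-method analysis: extract a single per-step energy inequality and then resolve the resulting scalar recurrence. Write $\delta_n = E(\un) - E(u^*) \geq 0$. The starting point is that, since $\unn$ minimizes $Q(\cdot,\un)$, for \emph{any} admissible test point $w$ we have $Q(\unn,\un) \leq Q(w,\un)$. For the test point I would take the segment $w_t = (1-t)\un + t u^*$ with $t \in [0,\theta]$; the restriction $t \leq \theta$ is exactly what makes $\frac{1}{\theta}w_t - (\frac{1}{\theta}-1)\un = (1 - \frac{t}{\theta})\un + \frac{t}{\theta}u^*$ a convex combination of $\un$ and $u^*$, so the awkward $G$-terms in the upper bound of \cref{Ass:gradient} can be controlled by convexity. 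Note $w_t \in K_0 \cap \dom G$, since $\un, u^* \in K_0$ and $K_0$ is convex, so \cref{Ass:gradient} applies with $K = K_0$.

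First I would bound $Q(\unn,\un)$ from below. Plugging the lower estimate $B(\unn,\un) \geq D_F(\unn,\un) + G(\unn)$ into the definition of $Q$ and expanding the Bregman distance, the two $\langle F'(\un), \unn - \un\rangle$ terms and the two $F(\un)$ terms cancel, leaving $Q(\unn,\un) \geq F(\unn) + G(\unn) = E(\unn)$. Next I would bound $Q(w_t,\un)$ from above: using the upper estimate for $B$ together with $\|u^* - \un\| \leq R_0$ from \cref{R0}, the convexity of $G$ (to collapse $\theta G((1-\frac{t}{\theta})\un + \frac{t}{\theta}u^*) + (1-\theta)G(\un)$ into $(1-t)G(\un) + tG(u^*)$), and the convexity of $F$ (to replace $\langle F'(\un), u^*-\un\rangle$ by $F(u^*) - F(\un)$), everything reorganizes into $Q(w_t,\un) \leq (1-t)E(\un) + tE(u^*) + \frac{L}{q}t^q R_0^q$. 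Chaining the three facts yields the master recurrence
\begin{equation*}
\delta_{n+1} \leq (1-t)\delta_n + \frac{L}{q}t^q R_0^q, \qquad t \in [0,\theta].
\end{equation*}

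It remains to minimize the right-hand side over $t$. The unconstrained minimizer is $t^* = (\delta_n/(LR_0^q))^{1/(q-1)}$, and the condition $t^* \gtrless \theta$ corresponds precisely to the dichotomy $\delta_n \gtrless \theta^{q-1}LR_0^q$ in the statement. When $\delta_n \geq \theta^{q-1}LR_0^q$, the convex objective is decreasing on $[0,\theta]$, so the minimum is at $t=\theta$; substituting and using $LR_0^q \leq \delta_n/\theta^{q-1}$ to absorb the power term gives $\delta_{n+1} \leq (1 - \theta(1 - \frac{1}{q}))\delta_n$, which is the first assertion at $n=0$. When $\delta_n < \theta^{q-1}LR_0^q$, the minimum is at $t^*$, and a short computation using $(t^*)^{q-1} = \delta_n/(LR_0^q)$ yields
\begin{equation*}
\delta_{n+1} \leq \delta_n\left( 1 - \left(1 - \frac{1}{q}\right)\left(\frac{\delta_n}{LR_0^q}\right)^{\frac{1}{q-1}} \right).
\end{equation*}

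For the sublinear estimate I would exploit that $\{\delta_n\}$ is decreasing by \cref{Lem:decreasing}: in the regime $\delta_0 < \theta^{q-1}LR_0^q$ every iterate satisfies the same strict inequality, so the displayed nonlinear recurrence holds for all $n$. Normalizing $s_n = \delta_n/(LR_0^q)$ and setting $r = 1/(q-1)$, it reads $s_{n+1} \leq s_n(1 - \frac{q-1}{q}s_n^{r})$ with $\frac{q-1}{q}s_n^{r} < \theta \leq 1$; applying the elementary inequality $(1-x)^{-r} \geq 1 + rx$ on $[0,1)$ telescopes into $s_{n+1}^{-r} - s_n^{-r} \geq r\cdot\frac{q-1}{q} = \frac{1}{q}$. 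Summing and using $s_0^{-r} > 1/\theta$ gives $s_n^{-r} \geq \frac{1}{\theta} + \frac{n}{q} \geq \min(\frac{1}{\theta}, \frac{1}{q})(n+1)$, which inverts to the claimed bound $\delta_n \leq C_{q,\theta}LR_0^q/(n+1)^{q-1}$. I expect this last step to be the main obstacle: identifying the right monotone quantity ($s_n^{-r}$) and verifying that the Bernoulli-type inequality applies, i.e. that $\frac{q-1}{q}s_n^{r} < 1$ holds throughout, which is exactly where the decreasing property and the threshold $\theta^{q-1}LR_0^q$ are essential. The per-step inequality itself is routine once the test point $w_t$ and the $t \leq \theta$ convexity trick are in place.
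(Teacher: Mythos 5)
Your proposal is correct and follows essentially the same route as the paper's own proof in \cref{App:proof_gradient}: your test points $w_t=(1-t)u^{(n)}+tu^*$, $t\in[0,\theta]$, with the $\theta$-rescaling convexity trick for the $G$-terms reproduce exactly the paper's chain of inequalities~\cref{gradient1}, and your constrained one-dimensional minimization over $t$, including the dichotomy at $\delta_n=\theta^{q-1}LR_0^q$, is precisely the content of \cref{Lem:q_min}. The only deviation is cosmetic: where the paper invokes \cref{Lem:recur} to solve the recurrence $\delta_{n+1}\leq\delta_n\bigl(1-(1-\frac{1}{q})(\delta_n/(LR_0^q))^{\frac{1}{q-1}}\bigr)$, you prove it inline via a Bernoulli-type telescoping of $\delta_n^{-1/(q-1)}$ --- which is valid, since the required bound $\frac{q-1}{q}(\delta_n/(LR_0^q))^{\frac{1}{q-1}}<1$ does follow from \cref{Lem:decreasing} and $\delta_0<\theta^{q-1}LR_0^q$ --- yielding the same $O(1/n^{q-1})$ rate with the explicit constant $\max(\theta,q)^{q-1}$ in place of the one in~\cref{Cq}, a harmless discrepancy since both constants depend only on $q$ and $\theta$.
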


\Cref{Thm:gradient} means that the convergence rate of the energy error of \cref{Alg:gradient} is $O(1/n^{q-1})$. 
If the functional $E$ in~\cref{model_gradient} is \textit{sharp}, then a better convergence rate can be obtained.
The sharpness condition of $F$ is summarized in \cref{Ass:sharp}.

\begin{assumption}[sharpness]
\label{Ass:sharp}
There exists a constant $p > 1$ such that for any bounded and convex subset $K$ of $V$ satisfying $u^* \in K$, we have
\begin{equation*}
\frac{\mu_K}{p} \| u - u^* \|^{p} \leq E(u) - E(u^*), \quad u \in K,
\end{equation*}
for some $\mu_K > 0$.
\end{assumption}

The inequality in \cref{Ass:sharp} is also known as the {\L}ojasiewicz inequality.
It is known that quite many kinds of functions satisfy \cref{Ass:sharp}; see~\cite{BDL:2007,XY:2013}.
Invoking~\cref{optimality}, one can obtain the following simple criterion to check whether \cref{Ass:sharp} holds.

\begin{proposition}
\label{Prop:uniform}
Consider the minimization problem~\cref{model_gradient}.
For any bounded and convex subset $K$ of $V$, assume that $F$ is uniformly convex with parameters $p$ and $\mu_K$ on $K$, i.e.,
\begin{equation}
\label{uniform}
D_F (u,v) \geq \frac{\mu_K}{p} \| u - v\|^p, \quad u,v \in K. 
\end{equation}
Then \cref{Ass:sharp} holds.
\end{proposition}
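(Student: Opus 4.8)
The plan is to obtain the sharpness bound of \cref{Ass:sharp} directly from the uniform convexity hypothesis \cref{uniform}, by specializing the latter to the minimizer $u^*$ and then disposing of the residual first-order terms with the optimality condition \cref{optimality}. Fix any bounded convex $K \subseteq V$ with $u^* \in K$. Because both $u$ and $u^*$ then lie in $K$, I may invoke \cref{uniform} with the second argument equal to $v = u^*$.

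First I would expand the Bregman distance in that specialized inequality: since $D_F(u, u^*) = F(u) - F(u^*) - \langle F'(u^*), u - u^* \rangle$, the hypothesis gives
\[
F(u) - F(u^*) - \langle F'(u^*), u - u^* \rangle \geq \frac{\mu_K}{p} \| u - u^* \|^p, \quad u \in K.
\]
Adding $G(u) - G(u^*)$ to both sides and recalling that $E = F + G$, this rearranges to
\[
E(u) - E(u^*) \geq \frac{\mu_K}{p} \| u - u^* \|^p + \Big( \langle F'(u^*), u - u^* \rangle + G(u) - G(u^*) \Big).
\]
The parenthesized quantity is exactly the left-hand side of \cref{optimality}, hence nonnegative for every $u \in V$, and in particular for $u \in K$. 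Dropping it yields the desired estimate $\frac{\mu_K}{p}\|u - u^*\|^p \leq E(u) - E(u^*)$ on $K$, which is precisely \cref{Ass:sharp} with the same exponent $p$ and constant $\mu_K$.

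I do not expect a genuine obstacle: once the substitution $v = u^*$ is made, the argument is a short rearrangement. The only point that deserves a word is why $v = u^*$ is admissible, and this is exactly guaranteed by the restriction in \cref{Ass:sharp} to sets $K$ containing $u^*$; for sets not containing $u^*$ the assumption asserts nothing. The essence of the proof is thus the interplay between the pointwise lower bound furnished by uniform convexity and the variational inequality that characterizes the minimizer of \cref{model_gradient}.
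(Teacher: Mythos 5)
Your proof is correct and is exactly the argument the paper intends: the paper proves this proposition by the one-line remark ``invoking~\cref{optimality}'', which is precisely your step of writing $E(u)-E(u^*) = D_F(u,u^*) + \left< F'(u^*), u-u^*\right> + G(u)-G(u^*)$ and dropping the nonnegative optimality term. No gaps; the specialization $v=u^*$ is admissible for the sets $K$ considered in \cref{Ass:sharp}, just as you note.
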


We write $\mu = \mu_{K_0}$, where $K_0$ was defined in~\cref{K0}.
One can prove without major difficulty that $p$ should be greater than or equal to $q$ in order to satisfy  \cref{Ass:gradient,Ass:sharp} simultaneously.
Note that under \cref{Ass:sharp}, a solution of~\cref{model_gradient} is unique.

With \cref{Ass:gradient,Ass:sharp}, the following improved convergence theorem for \cref{Alg:gradient} is available; see \cref{App:proof_gradient_uniform} for the proof.

\begin{theorem}
\label{Thm:gradient_uniform}
Suppose that \cref{Ass:gradient,Ass:sharp} hold.
In \cref{Alg:gradient}, we have the following:
\begin{enumerate}
\item In the case $p = q$, we have
\begin{equation*}
\resizebox{0.9\textwidth}{!}{$ \displaystyle E(\un) - E(u^*) \leq \left( 1 - \left( 1- \frac{1}{q}  \right) \min \left\{ \theta , \left( \frac{\mu}{qL} \right)^{\frac{1}{q-1}} \right\} \right)^n ( E(u^{(0)}) - E(u^*) ), \gap n \geq 0.$}
\end{equation*}
\item In the case $p > q$, if $E(u^{(0)}) - E(u^*) \geq \theta^{\frac{p(q-1)}{p-q}} p^{\frac{q}{p-q}} (L^p / \mu^q )^{\frac{1}{p-q}}$, then
\begin{equation*}
E(u^{(1)}) - E(u^* ) \leq \left( 1 - \theta \left( 1 - \frac{1}{q} \right) \right) ( E(u^{(0)}) - E(u^*) ).
\end{equation*}
Otherwise, we have
\begin{equation*}
E(u^{(n)}) - E(u^*) \leq \frac{C_{p,q,\theta}(L^p/\mu^q)^{\frac{1}{p-q}}}{(n+1)^{\frac{p(q-1)}{p-q}}}, \quad n \geq 0,
\end{equation*}
where $C_{p,q,\theta}$ is a positive constant defined in~\cref{Cpq} depending on $p$, $q$, and $\theta$ only.
\end{enumerate}
\end{theorem}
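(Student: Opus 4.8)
The plan is to reduce the whole statement to a single one-step energy estimate and then optimize a scalar function of the step length, treating $p = q$ and $p > q$ separately. Write $\delta_n = E(\un) - E(u^*)$. First I would establish that, for every $t \in [0, \theta]$,
\begin{equation*}
\delta_{n+1} \leq (1 - t)\delta_n + \frac{L t^q}{q} \| \un - u^* \|^q .
\end{equation*}
To prove this, test the minimization defining $\unn$ with the point $w = (1-t)\un + t u^*$, which lies in $K_0 \cap \dom G$ by convexity of $K_0$. The lower bound in \cref{Ass:gradient} gives $Q(u, \un) \geq E(u)$ for all $u$, hence $E(\unn) \leq Q(\unn, \un) \leq Q(w, \un)$; for the last quantity I would use the upper bound in \cref{Ass:gradient}, noting the identity $\frac{1}{\theta} w - (\frac{1}{\theta} - 1)\un = (1 - \frac{t}{\theta})\un + \frac{t}{\theta} u^*$ so that convexity of $G$ collapses the three $G$-terms to $(1-t)G(\un) + t G(u^*)$, while convexity of $F$ gives $\langle F'(\un), u^* - \un \rangle \leq F(u^*) - F(\un)$. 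This is precisely the estimate underlying \cref{Thm:gradient}. Finally \cref{Ass:sharp} with $K = K_0$ supplies $\| \un - u^* \|^q \leq (p \delta_n / \mu)^{q/p}$, which I insert into the displayed estimate.

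In the case $p = q$ this yields $\delta_{n+1} \leq \phi(t)\delta_n$ with $\phi(t) = 1 - t + (L/\mu)t^q$. I would minimize $\phi$ over $[0,\theta]$: the unconstrained minimizer is $t^* = (\mu/(qL))^{1/(q-1)}$, where $\phi(t^*) = 1 - (1 - 1/q)t^*$. Taking $t = \min\{\theta, t^*\}$ and checking the case $\theta \leq t^*$ separately (there $\theta^{q-1} \leq \mu/(qL)$ forces $\phi(\theta) \leq 1 - (1 - 1/q)\theta$) gives a per-step factor at most $1 - (1 - 1/q)\min\{\theta, (\mu/(qL))^{1/(q-1)}\}$; iterating over $n$ is then immediate.

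In the case $p > q$ the estimate becomes $\delta_{n+1} \leq (1-t)\delta_n + \frac{L}{q}(p/\mu)^{q/p} t^q \delta_n^{q/p}$. Its minimizer in $t$ is $t^*(\delta_n) = (\delta_n^{(p-q)/p} / (L (p/\mu)^{q/p}))^{1/(q-1)}$, and a direct manipulation shows that $t^*(\delta_0) \geq \theta$ is exactly the threshold condition $\delta_0 \geq \theta^{p(q-1)/(p-q)} p^{q/(p-q)} (L^p/\mu^q)^{1/(p-q)}$. If $\delta_0$ meets the threshold I take $t = \theta$; the threshold inequality forces $\frac{L\theta^q}{q}\|u^{(0)} - u^*\|^q \leq \frac{\theta}{q}\delta_0$, which gives the one-step linear decrease $\delta_1 \leq (1 - \theta(1 - 1/q))\delta_0$. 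If $\delta_0$ is below the threshold, then by \cref{Lem:decreasing} every $\delta_n$ remains below it, so $t^*(\delta_n) \leq \theta$ is admissible; substituting $t = t^*(\delta_n)$ and simplifying as in the $p=q$ computation produces the recursion $\delta_{n+1} \leq \delta_n - \beta \delta_n^{1+\alpha}$ with $\alpha = (p-q)/(p(q-1))$ and an explicit $\beta > 0$ built from $L, \mu, p, q$.

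The final and most delicate step is to turn the recursion $\delta_{n+1} \leq \delta_n - \beta \delta_n^{1+\alpha}$ into the stated algebraic rate with the correct constant. I would exploit convexity of $x \mapsto x^{-\alpha}$: since $\delta_{n+1} \leq \delta_n$ it gives $\delta_{n+1}^{-\alpha} - \delta_n^{-\alpha} \geq \alpha \delta_n^{-\alpha - 1}(\delta_n - \delta_{n+1}) \geq \alpha\beta$, so $\delta_n^{-\alpha} \geq \delta_0^{-\alpha} + \alpha\beta n$. The below-threshold hypothesis on $\delta_0$ implies $\delta_0^{-\alpha} \geq \alpha\beta$, whence $\delta_n^{-\alpha} \geq \alpha\beta(n+1)$ and $\delta_n \leq (\alpha\beta)^{-1/\alpha}(n+1)^{-1/\alpha}$; since $1/\alpha = p(q-1)/(p-q)$ and $\beta^{-1/\alpha}$ carries the factor $(L^p/\mu^q)^{1/(p-q)}$, the remaining numerical powers can be absorbed into a constant $C_{p,q,\theta}$ depending only on $p$, $q$, and $\theta$, finishing the argument. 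The main obstacles are the bookkeeping that matches the threshold with the identity $t^*(\delta_0) = \theta$ and the verification that the below-threshold hypothesis yields $\delta_0^{-\alpha} \geq \alpha\beta$; both are routine but must be handled carefully to recover the precise constant.
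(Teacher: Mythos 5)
Your proposal is correct and follows essentially the same route as the paper: the same one-step estimate obtained by testing the minimization at $w=(1-t)\un+tu^*$ and invoking both inequalities of \cref{Ass:gradient} (this is the paper's derivation of its inequality~\cref{gradient1}), the same insertion of \cref{Ass:sharp}, the same scalar minimization over $t\in[0,\theta]$ with the identical threshold identification, and the same use of monotonicity (\cref{Lem:decreasing}) to propagate the below-threshold regime. The only genuine deviation is the last step: where the paper invokes the recursion lemma (\cref{Lem:recur}, cited from the literature) to convert $\delta_{n+1}\leq\delta_n-\beta\delta_n^{1+\alpha}$ into the $O\bigl((n+1)^{-1/\alpha}\bigr)$ rate, you prove this directly via convexity of $x\mapsto x^{-\alpha}$, telescoping $\delta_n^{-\alpha}\geq\delta_0^{-\alpha}+\alpha\beta n$, and your verification that the below-threshold hypothesis yields $\delta_0^{-\alpha}\geq\alpha\beta$ does check out (one computes $\alpha\beta T^{\alpha}=(p-q)\theta/(pq)<1$ for the threshold value $T$). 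As a consequence your multiplicative constant has a different form from the one in~\cref{Cpq} --- yours is $(\alpha\beta)^{-1/\alpha}$, depending only on $p$ and $q$, neither uniformly larger nor smaller than the paper's --- but it has the same dependency structure, so the theorem as stated (with the constant defined by the proof) is fully recovered.
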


In \Cref{Thm:gradient,Thm:gradient_uniform}, the decay rate of the energy error $E(\un ) - E(u^*)$ depends on only $p$, $q$, $\theta$, $L$, and $\mu$ if the initial energy error $E (u^{(0)}) - E(u^*)$ is small enough.
Therefore, in applications, it is enough to estimate those variables to get the convergence rate of the algorithm.

\section{Additive Schwarz methods for convex optimization}
\label{Sec:ASM}
This section is devoted to an abstract convergence theory of additive Schwarz methods for convex optimization~\cref{model_gradient}.
We present an additive Schwarz method for~\cref{model_gradient} based on an abstract framework of space decomposition.
Then we show that the proposed method is an instance of~\cref{Alg:gradient}.
Such an observation makes the convergence analysis of the proposed method straightforward.

First, we present a space decomposition setting.
Throughout this section, an index $k$ runs from $1$ to $N$.
Let $V_k$ be a reflexive Banach space and $R_k^*$:~$V_k \rightarrow V$ be a bounded linear operator such that
\begin{equation}
\label{decomposition}
V = \sumk R_k^* V_k
\end{equation}
and its adjoint $R_k$:~$V^* \rightarrow V_k^*$ is surjective.
For example, if $V_k$ is a subspace of $V$, then one may choose $R_k^*$ as the natural embedding.

In our framework, we allow inexact local solvers.
Let $d_k$:~$V_k \times V \rightarrow \overline{\mathbb{R}}$ and $G_k$:~$V_k \times V \rightarrow \overline{\mathbb{R}}$ be functionals which are proper, convex, and lower semicontinuous with respect to their first arguments.
Local problems of the proposed method shall have the following general form:
\begin{equation}
\label{local_general}
\min_{w_k \in V_k} \left\{ F(v) + \left< F'(v), R_k^* w_k \right> + \omega d_k (w_k, v) + G_k (w_k , v) \right\} 
\end{equation}
for some $v \in V$ and $\omega > 0$.
In the case of exact local solvers, we set
\begin{equation} \label{exact_local} \begin{split}
d_k (w_k, v) &= D_F (v + \Rw , v), \\
G_k (w_k, v) &= G(v + \Rw )
\end{split} \end{equation}
for $w_k \in V_k$, $v \in V$, and we set $\omega = 1$.
Then~\cref{local_general} becomes
\begin{equation*}
\min_{w_k \in V_k} E(v + \Rw ).
\end{equation*}

We present a general additive Schwarz method for~\cref{model_gradient} with local problems~\cref{local_general} in \cref{Alg:ASM}.
The constants $\tau_0$ and $\omega_0$ in \cref{Alg:ASM} will be defined in \cref{Ass:convex,Ass:local}, respectively.

\begin{algorithm}[]
\caption{Additive Schwarz method for~\cref{model_gradient}}
\begin{algorithmic}[]
\label{Alg:ASM}
\STATE Choose $u^{(0)} \in \dom G$, $\tau \in (0, \tau_0 ]$, and $\omega \geq \omega_0$.
\FOR{$n=0,1,2,\dots$}
\item \vspace{-0.5cm} \begin{equation*}
\resizebox{0.9\textwidth}{!}{$ \displaystyle \begin{split}
w_k^{(n+1)} &\in \argmin_{w_k \in V_k} \left\{ F(\un) + \langle F'(\un), \Rw \rangle
 + \omega d_k (w_k, \un ) + G_k ( w_k, \un) \right\}, \gap 1 \leq k \leq N, \\
\unn &= \un + \tau \sumk \Rw^{(n+1)}
\end{split} $}
\end{equation*} \vspace{-0.4cm}
\ENDFOR
\end{algorithmic}
\end{algorithm}

In order to ensure convergence of \cref{Alg:ASM}, the following three conditions should be considered: stable decomposition, strengthened convexity, and local stability.

\begin{assumption}[stable decomposition]
\label{Ass:stable}
There exists a constant $q > 1$ such that for any bounded and convex subset $K$ of $V$, the following holds:
for any $u, v \in K \cap \dom G$, there exists $w_k \in V_k$, $1\leq k \leq N$, such that
\begin{equation*}
u-v = \sumk \Rw ,
\end{equation*}
\begin{equation*}
\sumk d_k (w_k, v) \leq \frac{C_{0,K}^q}{q} \| u-v \|^q ,
\end{equation*}
and
\begin{equation}
\label{stable_nonsmooth}
\sumk G_k ( w_k, v) \leq G \left( u \right) + (N-1) G(v),
\end{equation}
where $C_{0, K}$ is a positive constant depending on $K$.
\end{assumption}

Similar assumptions to \cref{Ass:stable} for Schwarz methods can be found in existing works, e.g.,~\cite[Assumption~1 and equation~(7)]{BK:2012}.
In those works, several function decompositions tailored for particular applications were proposed.
We will see in \cref{Sec:Applications} that \cref{Ass:stable} is compatible with them.
We also note that the assumption~\cref{stable_nonsmooth} for the nonsmooth part $G$ of~\cref{model_gradient} is essential; a counterexample for the convergence of Schwarz methods for a problem not satisfying~\cref{stable_nonsmooth} was introduced in~\cite[Claim~6.1]{LN:2017}. 

\begin{assumption}[strengthened convexity]
\label{Ass:convex}
There exists a constant $\tau_0 \in (0, 1]$ which satisfies the following:
for any $v \in V$, $w_k \in V_k$, $1 \leq k \leq N$, and $\tau \in (0, \tau_0 ]$, we have
\begin{equation*}
\left( 1 - \tau N \right) E(v) + \tau \sumk E (v + \Rw) \geq E \left( v + \tau \sumk \Rw \right).
\end{equation*}
\end{assumption}

By the convexity of $E$, \cref{Ass:convex} is valid $\tau_0 = 1/N$.
However, a smaller value for $\tau_0$ independent of $N$ can be found by, for example, the coloring technique; details will be given in \cref{Sec:DD}.

\begin{assumption}[local stability]
\label{Ass:local}
There exists a constant $\omega_0 > 0$ which satisfies the following:
for any $v \in \dom G$, and $w_k \in V_k$, $1 \leq k \leq N$, we have
\begin{align*}
D_F ( v + \Rw, v ) &\leq \omega_0 d_k (w_k, v), \\
G(v + \Rw ) &\leq G_k (w_k , v).
\end{align*}
\end{assumption}

In the case of exact solvers, i.e.,~\cref{exact_local}, \cref{Ass:local} is trivial with $\omega_0 = 1$.
In general, as explained in~\cite{TW:2005}, \cref{Ass:local} gives a one-sided measure of approximation properties of the local solvers.
One can use any local solvers satisfying \cref{Ass:local} for \cref{Alg:ASM}.

For convergence analysis of \cref{Alg:ASM}, we introduce a functional $\M : V \times V \rightarrow \overline{\mathbb{R}}$:
for two positive real numbers $\tau$ and $\omega$, the functional $\M$ is defined as
\begin{equation}
\label{M}
\resizebox{\textwidth}{!}{$\displaystyle \M (u, v) = \tau \inf \left\{ \sumk (\omega d_k + G_k) (w_k, v)  : u-v = \tau \sumk \Rw, \gap w_k \in V_k \right\}
 + \left(1- \tau N \right) G( v), \gap u, v \in V.$}
\end{equation}
The following lemma summarizes important properties of $\M$.

\begin{lemma}
\label{Lem:M}
For $\tau$, $\omega > 0$, the functional $\M$:~$V \times V \rightarrow \mathbb{R}$ defined in~\cref{M} is convex and lower semicontinuous with respect to its first argument.
\end{lemma}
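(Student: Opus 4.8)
The plan is to show that $\M(\cdot, v)$ is the result of composing and combining convex, lower semicontinuous operations in a way that preserves both properties. The key observation is that the infimum defining $\M$ in~\cref{M} has exactly the structure of an infimal convolution of infimal postcompositions, which is precisely the object treated in~\cref{Lem:infimal}. So the first step is to rewrite $\M$ using that identity. Fix $v \in V$ and regard $\tau$, $\omega$ as fixed positive constants.

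First I would define, for each $k$, a functional $H_k$:~$V_k \rightarrow \overline{\mathbb{R}}$ by $H_k(w_k) = (\omega d_k + G_k)(w_k, v)$, which is proper, convex, and lower semicontinuous by the standing hypotheses on $d_k$ and $G_k$ (a positive multiple plus a sum of such functionals). I would also introduce the scaled operators $\tilde{R}_k^* := \tau R_k^*$:~$V_k \rightarrow V$. Then the infimum in~\cref{M}, viewed as a function of $u$, is
\begin{equation*}
u \mapsto \inf \left\{ \sumk H_k(w_k) : u - v = \sumk \tilde{R}_k^* w_k, \gap w_k \in V_k \right\}.
\end{equation*}
By \cref{Lem:infimal} applied with $W_k = V_k$, $W = V$, $A_k = \tilde{R}_k^*$, and $F_k = H_k$, this equals $\bigl( \bigsquareinline_{k=1}^N (\tilde{R}_k^* \triangleright H_k) \bigr)(u - v)$. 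Hence, as a function of its first argument,
\begin{equation*}
\M(\cdot, v) = \tau \left( \bigsquare_{k=1}^N (\tilde{R}_k^* \triangleright H_k) \right)(\,\cdot\, - v) + (1 - \tau N) G(v),
\end{equation*}
where the last term is a constant (in $u$).

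Next I would assemble convexity and lower semicontinuity from the building blocks. Each infimal postcomposition $\tilde{R}_k^* \triangleright H_k$ is convex because $H_k$ is convex and infimal postcomposition preserves convexity, as recorded in \cref{Sec:Pre}. The infimal convolution of convex functionals is convex, also noted in \cref{Sec:Pre}. A positive scalar multiple preserves convexity, and adding the constant $(1-\tau N)G(v)$ does not affect it; so $\M(\cdot, v)$ is convex. For lower semicontinuity I would pass to conjugates using~\cref{dual_inf}, computing
\begin{equation*}
\left( \bigsquare_{k=1}^N (\tilde{R}_k^* \triangleright H_k) \right)^* = \sumk (\tilde{R}_k^* \triangleright H_k)^* = \sumk H_k^* \circ (\tilde{R}_k^*)^* = \sumk H_k^* \circ \tau R_k,
\end{equation*}
which is a sum of convex lower semicontinuous functionals (each $H_k^*$ is lower semicontinuous as a conjugate, and composition with the bounded linear $\tau R_k$ preserves this), hence itself convex and lower semicontinuous; taking its conjugate back recovers $\bigsquareinline_{k=1}^N (\tilde{R}_k^* \triangleright H_k)$ as a biconjugate, which is automatically lower semicontinuous. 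The affine change of variable $u \mapsto u - v$ and multiplication by $\tau > 0$ preserve lower semicontinuity, finishing the argument.

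\textbf{The main obstacle} I anticipate is not the abstract combinatorics of convex operations but the bookkeeping needed to invoke~\cref{Lem:infimal} cleanly: that lemma is stated for real vector spaces and plain functionals with no topology, so I must be careful that the rewrite it provides is a genuine identity of functionals before layering the Banach-space lower-semicontinuity arguments on top. A subtler point is the claim in the statement that $\M$ maps into $\mathbb{R}$ rather than $\overline{\mathbb{R}}$; strictly I would only expect this to hold on the relevant effective domain (where a valid decomposition $u - v = \tau\sumk \Rw$ exists and the local energies are finite), so I would either note that lower semicontinuity is the essential conclusion and holds in the $\overline{\mathbb{R}}$-valued sense, or restrict attention to $\dom \M(\cdot, v)$. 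I expect the lower-semicontinuity half to require the most care, since infimal convolution does not preserve lower semicontinuity in general without a coercivity or exactness hypothesis — which is exactly why I route the proof through the conjugate/biconjugate identities in~\cref{dual_inf} rather than arguing directly on the infimum.
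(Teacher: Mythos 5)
Your opening reduction --- fixing $v$, setting $H_k = (\omega d_k + G_k)(\cdot, v)$, and invoking \cref{Lem:infimal} to write the infimum in \cref{M} as $\bigl( \bigsquareinline_{k=1}^N ((\tau R_k^*) \triangleright H_k) \bigr)(u-v)$ --- is exactly the paper's first step, and your convexity argument is correct. The lower-semicontinuity half, however, is circular. The step ``taking its conjugate back recovers $\bigsquareinline_{k=1}^N ((\tau R_k^*) \triangleright H_k)$ as a biconjugate'' is the Fenchel--Moreau theorem, $f^{**} = f$, which is valid precisely when $f$ is proper, convex \emph{and lower semicontinuous} --- the very property you are trying to prove. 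Without it one has only $f^{**} \leq f$, where $f^{**}$ is the closed convex envelope of $f$, and the two differ exactly at the points where $f$ fails to be lower semicontinuous. So your (correct) computation of the conjugate via \cref{dual_inf} identifies the lower semicontinuous envelope of $\M(\cdot,v)$, not $\M(\cdot,v)$ itself. No route through conjugates can close this gap, because conjugation cannot distinguish a function from its closure.

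Moreover, the hypotheses your argument actually uses (each piece proper, convex, lower semicontinuous, plus the conjugation formulas) are not sufficient for the conclusion, so this is not a presentational issue. Take $N=2$, $V = V_1 = V_2 = \mathbb{R}^2$, $R_k^*$ the identity, $d_k \equiv 0$, $G \equiv 0$, and let $G_k(\cdot,v)$ be the indicators of the closed convex sets $C_1 = \{(x,y) : x>0, \ y \geq 1/x\}$ and $C_2 = \mathbb{R} \times \{0\}$; then the infimum in \cref{M} is the indicator of a translate of $\mathbb{R} \times (0,\infty)$, which is not lower semicontinuous because $C_1 + C_2$ is not closed. The paper closes the proof at exactly the point where yours stalls, and with a different tool: since \cref{Lem:infimal} exhibits the infimal convolution as a \emph{single} infimal postcomposition of the separable sum $(w_k)_k \mapsto \sumk H_k(w_k)$ by the combined operator $(w_k)_k \mapsto \tau \sumk \Rw$, it applies \cite[Lemma~2.6]{BC:2013}, which guarantees $A \triangleright F \in \Gamma_0$ when $F \in \Gamma_0$ and the adjoint $A^*$ is surjective, the surjectivity being supplied by the standing assumption on the $R_k$. (In the counterexample above, the relevant combined adjoint is the diagonal map $p \mapsto (\tau p, \tau p)$, which is not surjective onto $V_1^* \times V_2^*$; adjoint surjectivity, i.e., injectivity plus closed range of the postcomposition operator, is precisely what excludes such pathologies.) Your proposal never uses the surjectivity of the $R_k$ anywhere, which is the structural tell of the gap. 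To repair it, replace the biconjugation step by an appeal to \cite[Lemma~2.6]{BC:2013}, or prove lower semicontinuity directly by a closed-range or weak-compactness argument; it cannot be recovered from \cref{dual_inf} alone.
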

\begin{proof}
For convenience, we fix $v \in V$ and write
\begin{equation*}
\M (u)  = \M(u,v), \quad d_k (w_k) = d_k (w_k, v), \quad G_k (w_k) = G_k (w_k, v) 
\end{equation*}
for $u \in V$ and $w_k \in V_k$.
By \cref{Lem:infimal} we have
\begin{equation} 
\label{M1}
\resizebox{\textwidth}{!}{$ \displaystyle \begin{split} \M (u) &= \tau \inf \left\{ \sumk (\omega d_k + G_k ) (w_k) : u- v = \tau \sumk \Rw, \gap w_k \in V_k\right\} + (1- \tau  N) G(v) \\
&= \tau \left( \bigsquare_{k=1}^N \left((\tau R_k^*) \triangleright (\omega d_k + G_k ) \right) \right) (u-v ) + (1- \tau  N) G(v). \end{split} $}
 \end{equation}
Since $R_k$ is surjective, by~\cite[Lemma~2.6]{BC:2013} we get the desired result.
\end{proof}

The following lemma, named the \textit{generalized additive Schwarz lemma}, shows that \cref{Alg:ASM} in fact belongs to a class of \cref{Alg:gradient} with $B(u,v) = \M (u,v)$.

\begin{lemma}[generalized additive Schwarz lemma]
\label{Lem:ASM}
Let $\{ \un \}$ be the sequence generated by \cref{Alg:ASM}.
Then it satisfies
\begin{equation}
\label{unn_ASM}
\unn \in \argmin_{u \in V} \left\{ F(\un) + \langle F'(\un), u - \un \rangle + \M (u, u^{(n)}) \right\}, \quad n \geq 0,
\end{equation}
where $\M (u, \un)$ was given in~\cref{M}.
\end{lemma}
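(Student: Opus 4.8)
The plan is to unfold the definition of $\M$ inside the objective of~\cref{unn_ASM} and recognize that the outer minimization over $u \in V$ fuses with the infimum over subspace representations hidden in $\M$ into a single joint minimization over the local unknowns $(w_k)_{k=1}^N$. Fixing $v = \un$ and writing $\Phi(u) = F(\un) + \langle F'(\un), u - \un \rangle + \M(u, \un)$ for the objective in~\cref{unn_ASM}, I would first note that whenever $u - v = \tau \sumk \Rw$ for some $w_k \in V_k$, linearity of the duality pairing gives $\langle F'(v), u - v \rangle = \tau \sumk \langle F'(v), \Rw \rangle$; this identity holds for \emph{every} admissible representation, so the linear term can be carried inside the infimum defining $\M$ without changing its value.

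Next I would combine the two layers of minimization. Since $\tau > 0$ and the terms $F(v)$ and $(1 - \tau N) G(v)$ do not depend on the $w_k$, minimizing $\Phi$ over $u$ amounts to the joint infimum
\[
\min_{u \in V} \Phi(u) = F(v) + (1 - \tau N) G(v) + \tau \inf_{w_k \in V_k} \sumk \Big\{ \langle F'(v), \Rw \rangle + (\omega d_k + G_k)(w_k, v) \Big\},
\]
where the decomposition $V = \sumk R_k^* V_k$ guarantees that every admissible $u$ arises from some tuple $(w_k)$. The crucial structural point is that this infimum is \emph{separable} across $k$: the $k$-th summand involves only $w_k$, so the joint infimum splits into $N$ independent problems $\inf_{w_k \in V_k}\{\langle F'(v), \Rw \rangle + \omega d_k(w_k,v) + G_k(w_k,v)\}$. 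Restoring the constant $F(v)$, each of these is precisely the local problem solved in \cref{Alg:ASM}, so $w_k^{(n+1)}$ is one of its minimizers.

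To conclude, I would verify the two inequalities that pin $\unn$ as a global minimizer of $\Phi$. For the upper bound, $\unn - v = \tau \sumk R_k^* w_k^{(n+1)}$ is a feasible representation, so the definition of $\M$ yields $\M(\unn, v) \le \tau \sumk (\omega d_k + G_k)(w_k^{(n+1)}, v) + (1 - \tau N) G(v)$, whence $\Phi(\unn)$ is bounded above by the separated value computed in the previous step. For the lower bound, fix any competitor $u$ admitting a representation of $u - v$ (otherwise $\M(u,v) = +\infty$ and the inequality is trivial); using that each $w_k^{(n+1)}$ minimizes the $k$-th summand shows $\Phi(u)$ dominates the same separated value, and passing to the infimum over representations preserves this. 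Chaining the two bounds gives $\Phi(\unn) \le \Phi(u)$ for all $u$, i.e.~$\unn \in \argmin_{u \in V} \Phi(u)$.

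The main obstacle is bookkeeping around the nested infimum in $\M$ rather than anything deep: because that infimum need not be attained, I cannot simply substitute a minimizing representation for an arbitrary $u$. Handling this cleanly means using the explicit tuple $(w_k^{(n+1)})$ only as a feasible (upper-bounding) representation of $\unn$, while using an arbitrary representation of a competitor $u$ for the lower bound; the separability across $k$ together with $\tau > 0$ is exactly what makes both bounds collapse to the same constant. \Cref{Lem:infimal,Lem:M} underwrite the well-definedness of $\M$ used throughout.
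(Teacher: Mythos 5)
Your proof is correct, but it takes a genuinely different route from the paper's. The paper argues through convex duality: it invokes \cref{Lem:M} to ensure $\M(\cdot,\un)$ is convex and lower semicontinuous, rewrites the optimality condition of~\cref{unn_ASM} as $u \in \partial M_{\tau,\omega}^{(n)*}(-F'(\un))$ via the Legendre--Fenchel inversion~\cref{Legendre}, computes the conjugate $M_{\tau,\omega}^{(n)*}(p) = \tau \sumk \dkns(R_k p) + \langle p, \un \rangle$ using \cref{Lem:infimal} and~\cref{dual_inf}, and then chains the local optimality conditions $w_k^{(n+1)} \in \partial \dkns(-R_k F'(\un))$ through the subdifferential inclusions~\cref{subdifferential1,subdifferential2}. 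You instead stay entirely on the primal side: you fuse the outer minimization over $u$ with the inner infimum defining $\M$, exploit separability across $k$ to split the joint problem into the $N$ local problems of \cref{Alg:ASM}, and pin down $\unn$ by an upper bound (feasibility of the tuple $(w_k^{(n+1)})$) and a lower bound (each $w_k^{(n+1)}$ minimizes its own summand); your care about possible non-attainment of the inner infimum is exactly the right bookkeeping. What your approach buys: it is more elementary and needs fewer hypotheses---you never actually use \cref{Lem:M} or \cref{Lem:infimal} (despite citing them), nor the surjectivity of $R_k$, nor lower semicontinuity of $\M$, since everything reduces to manipulating infima. What the paper's approach buys: the explicit conjugate formula~\cref{ASM_preconditioner} derived inside its proof is reused later, in the subsection relating the framework to the classical additive Schwarz theory, to identify $\M$ with the quadratic form of the additive Schwarz preconditioner and hence \cref{Alg:ASM} with a preconditioned Richardson method; so the duality computation is not just proof overhead but carries the paper's main structural message.
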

\begin{proof}
Choose any $n \geq 0$.
We write
\begin{align*}
\dkn (w_k) &= (\omega d_k + G_k ) (w_k, \un ), \quad w_k \in V_k, \\
\Mn (u) &= \M (u, \un), \quad\quad\quad\quad\quad u \in V
\end{align*}
for convenience.
The optimality condition of~\cref{unn_ASM} is given by
\begin{equation*}
F'(\un) + \partial \Mn (u) \ni 0,
\end{equation*}
or equivalently,
\begin{equation*}
u \in \partial M_{\tau, \omega}^{(n)*}(- F'(\un))
\end{equation*}
by~\cref{Legendre} and \cref{Lem:M}.
Therefore, it suffices to show that
\begin{equation}
\label{ASM_WTS}
\unn \in \partial M_{\tau, \omega}^{(n)*}(- F'(\un)).
\end{equation}

The optimality condition for $w_k^{(n+1)}$ reads as
\begin{equation*}
R_k F'(\un) + \partial \dkn(w_k^{(n+1)}) \ni 0.
\end{equation*}
Since $\dkn \in \Gamma_0 (V_k)$, one can obtain the following explicit formula for $w_k^{(n+1)}$:
\begin{equation}
\label{wk}
w_k^{(n+1)} \in \partial \dkns ( - R_k F'(\un) ).
\end{equation}
Summation of~\cref{wk} over $1\leq k \leq N$ yields
\begin{equation}
\label{ASM1}
\unn \in \un + \tau \sumk R_k^*  \partial \dkns  ( - R_k F'(\un) ).
\end{equation}
On the other hand, dualizing~\cref{M1} with $v = \un$ yields
\begin{equation}
\label{ASM_preconditioner} \begin{split}
M_{\tau,\omega}^{(n)*} ( p) &= \tau \left( \bigsquare_{k=1}^N ( (\tau R_k^*) \triangleright \dkn ) \right)^* \left(\frac{1}{\tau }  p\right) + \langle p, \un \rangle \\
&\stackrel{\cref{dual_inf}}{=} \tau \left( \sumk \dkns \circ (\tau R_k) \right) \left( \frac{1}{\tau} p \right) + \langle p, \un \rangle \\
&= \tau \sumk \dkns \left( R_k p \right) + \langle p, \un \rangle
\end{split} \end{equation}
for $p \in V^*$.
Consequently, by~\cref{subdifferential1,subdifferential2} we have
\begin{equation}
\label{ASM2}
\partial M_{\tau,\omega}^{(n)*}  (p) \supseteq \tau \sumk R_k^* \partial \dkns \left( R_k p \right) + \un.
\end{equation}
If we substitute $p$ by $- F'(\un)$ in~\cref{ASM2}, we get
\begin{equation}
\label{ASM3}
\partial M_{\tau,\omega}^{(n)*} (-F' (\un) ) \supseteq \tau \sumk R_k^*  \partial \dkns ( -R_k F' (\un) ) + \un.
\end{equation}
Combining~\cref{ASM1,ASM3}, we have~\cref{ASM_WTS}.
\end{proof}

Thanks to \cref{Lem:ASM}, it suffices to verify that \cref{Ass:gradient} holds when $B(u,v) = \M (u,v)$ in order to show the convergence of \cref{Alg:ASM}.
In the following, we prove that \cref{Ass:stable,Ass:convex,Ass:local} are sufficient to ensure \cref{Ass:gradient}.

\begin{lemma}
\label{Lem:M_gradient}
Suppose that \cref{Ass:stable,Ass:convex,Ass:local} hold.
Let $\tau \in (0, \tau_0]$ and $\omega \geq \omega_0$.
For any bounded and convex subset $K$ of $V$, we have
\begin{multline*}
D_F (u, v) + G(u) \leq \M (u, v) \\
\leq \frac{\omega C_{0,K'}^q}{q \tau^{q-1}} \| u -v \|^q + \tau G \left( \frac{1}{\tau}u - \left( \frac{1}{\tau} - 1 \right) v \right) + (1-\tau ) G(v), \quad u,v \in K \cap \dom G,
\end{multline*}
where the functional $M_{\tau, \omega}$ was given in~\cref{M} and
\begin{equation}
\label{K'}
K' = \left\{ \frac{1}{\tau} u - \left( \frac{1}{\tau} - 1 \right) v : u,v \in K \right\}.
\end{equation}
\end{lemma}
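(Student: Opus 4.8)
The plan is to establish the two inequalities separately, thereby verifying that the choice $B(u,v) = \M(u,v)$ fulfills \cref{Ass:gradient} with the same exponent $q$ as in \cref{Ass:stable}, with $\theta = \tau$ and $L_K = \omega C_{0,K'}^q/\tau^{q-1}$. Throughout, I would abbreviate the rescaled point as $\tilde{u} = \frac{1}{\tau} u - (\frac{1}{\tau} - 1) v = v + \frac{1}{\tau}(u-v)$, so that $u = (1-\tau) v + \tau \tilde{u}$ and $u - v = \tau(\tilde{u} - v)$; this $\tilde u$ is exactly the argument of $G$ appearing on the right-hand side. I would also record at the outset that $K'$ in \cref{K'} is the image of the convex bounded set $K \times K$ under a bounded linear map, hence is itself convex and bounded, and that $K \subseteq K'$ (take the two arguments equal), so that both $v$ and $\tilde u$ lie in $K'$ whenever $u,v \in K$.

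For the lower bound $D_F(u,v) + G(u) \leq \M(u,v)$, I would fix an arbitrary admissible decomposition $w_k \in V_k$ with $u - v = \tau \sumk \Rw$ and prove the bound for the corresponding infimand; passing to the infimum then yields the claim. Writing $u_k = v + \Rw$, the identity $u = (1 - \tau N) v + \tau \sumk u_k$ lets me apply \cref{Ass:convex} to $E = F + G$ to get $E(u) \leq (1 - \tau N) E(v) + \tau \sumk E(u_k)$. I would then expand each $F(u_k)$ via the Bregman identity $F(u_k) = F(v) + \langle F'(v), \Rw \rangle + D_F(u_k, v)$, collect the linear terms into $\langle F'(v), u - v \rangle$ using $\tau \sumk \Rw = u - v$, and invoke \cref{Ass:local} together with $\omega \geq \omega_0$ to bound $D_F(u_k,v) \leq \omega_0 d_k(w_k,v) \leq \omega d_k(w_k,v)$ and $G(u_k) \leq G_k(w_k,v)$. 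Once the $G(v)$ terms combine to $(1 - \tau N) G(v)$, rearranging gives $D_F(u,v) + G(u) \leq \tau \sumk (\omega d_k + G_k)(w_k,v) + (1-\tau N) G(v)$, which is precisely the infimand defining $\M(u,v)$.

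For the upper bound I would instead construct a single good decomposition. Assuming $\tilde u \in \dom G$ (otherwise the right-hand side is infinite and there is nothing to prove), I would apply \cref{Ass:stable} to the pair $\tilde u, v \in K' \cap \dom G$ to obtain $w_k \in V_k$ with $\tilde u - v = \sumk \Rw$, with $\sumk d_k(w_k,v) \leq \frac{C_{0,K'}^q}{q}\|\tilde u - v\|^q$, and with $\sumk G_k(w_k,v) \leq G(\tilde u) + (N-1) G(v)$. Since $u - v = \tau(\tilde u - v) = \tau \sumk \Rw$, this family is admissible for $\M(u,v)$, so $\M(u,v) \leq \tau \omega \sumk d_k(w_k,v) + \tau \sumk G_k(w_k,v) + (1 - \tau N) G(v)$. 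Substituting the two stable-decomposition bounds, using $\|\tilde u - v\| = \frac{1}{\tau}\|u - v\|$ to turn $\tau \omega \frac{C_{0,K'}^q}{q}\|\tilde u - v\|^q$ into $\frac{\omega C_{0,K'}^q}{q\tau^{q-1}}\|u-v\|^q$, and simplifying $\tau(N-1)G(v) + (1-\tau N)G(v) = (1-\tau)G(v)$, I would recover exactly the stated right-hand side.

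The step I expect to require the most care is the upper bound, specifically the realization that \cref{Ass:stable} should be applied not to $(u,v)$ but to the rescaled pair $(\tilde u, v)$; this is the structural reason the constant $C_{0,K'}$ (with $K'$ rather than $K$) and the term $G(\tilde u)$ appear, and it is what makes the quotient $\|u-v\|^q/\tau^{q-1}$ emerge with the correct power of $\tau$. The remaining points are routine bookkeeping: admissible decompositions always exist because $V = \sumk R_k^* V_k$; cases in which some $G$-value is infinite make the corresponding infimand trivially dominant; and the convexity and boundedness of $K'$ must be confirmed so that \cref{Ass:stable} genuinely applies on it.
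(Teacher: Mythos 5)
Your proposal is correct and follows essentially the same route as the paper's proof: the lower bound comes from taking an arbitrary admissible decomposition, combining \cref{Ass:convex} with the Bregman expansion and \cref{Ass:local}, and passing to the infimum, while the upper bound comes from applying \cref{Ass:stable} to the rescaled pair $(\bar{u}, v)$ in $K'$ and noting that the resulting decomposition is admissible for $\M(u,v)$. Your explicit handling of the side conditions (convexity and boundedness of $K'$, the containment $K \subseteq K'$, and the case $\bar{u} \notin \dom G$) is a slightly more careful bookkeeping of details the paper leaves implicit, but the substance is identical.
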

\begin{proof}
Take any $w_k \in V_k$ such that
\begin{equation}
\label{M_gradient1}
u - v = \tau \sumk \Rw.
\end{equation}
By \cref{Ass:local} we get
\begin{subequations} \label{M_gradient2}
\begin{equation} 
\begin{split}
\tau \sumk \omega d_k (w_k , v) &\geq \tau \sumk D_F (v + \Rw, v) \\
&= \tau\sumk F(v + \Rw) - \tau N F(v) - \left< F'(v), u-v \right>
\end{split} \end{equation}
and
\begin{equation}
\tau \sumk G_k (w_k, v) \geq \tau \sumk G( v + \Rw ).
\end{equation}
\end{subequations}
Then using \cref{Ass:convex}, we have
\begin{equation*} \begin{split}
&\tau \sumk (\omega d_k + G_k) (w_k, v)  + \left( 1 -  \tau N \right) G(v) \\
&\quad\quad \stackrel{\cref{M_gradient2}}{\geq} \left( 1 - \tau N \right) E(v) + \tau \sumk E (v + \Rw) - F(v) - \left< F'(v) , u-v \right> \\
&\quad\quad\geq E(u) - F(v) - \left< F'(v), u-v \right> \\
&\quad\quad = D_F (u, v) + G(v) .
\end{split} \end{equation*}
Taking the infimum on the left-hand side of the above equation over all $w_k$ satisfying~\cref{M_gradient1} yields
\begin{equation*}
D_F (u, v) + G(u) \leq \M (u, v).
\end{equation*}

On the other hand, let
\begin{equation*}
\bar{u} = \frac{1}{\tau}u - \left( \frac{1}{\tau} - 1 \right) v.
\end{equation*}
Since $\bar{u}, v \in K'$, by \cref{Ass:stable}, there exist $\bar{w}_k \in V_k$, $1 \leq k \leq N$, such that
\begin{equation}
\label{M_gradient3}
\bar{u} - v = \sumk R_k^* \bar{w}_k,
\end{equation}
\begin{equation}
\label{M_gradient4}
\sumk d_k (\bar{w}_k , v) \leq \frac{C_{0,K'}^q}{q} \left\|  \bar{u} - v \right\|^q ,
\end{equation}
and
\begin{equation*}
\sumk G_k (\bar{w}_k, v) \leq G(\bar{u} ) + (N-1) G(v).
\end{equation*}
Note that
\begin{equation*}
u-v = \tau (\bar{u} - v ) = \tau \sumk R_k^* \bar{w}_k.
\end{equation*}
By~\cref{M_gradient3,M_gradient4}, it follows that
\begin{equation*} \begin{split}
\M (u,v) &\leq \tau \sumk ( \omega d_k + G_k ) (\bar{w}_k, v ) + ( 1- \tau N ) G(v) \\
&\leq \frac{\tau \omega C_{0,K'}^q }{q} \left\| \bar{u} - v \right\|^q + \tau G(\bar{u}) + (1-\tau ) G(v) \\
&= \frac{\omega C_{0,K'}^q}{q \tau^{q-1}} \| u -v \|^q + \tau G \left( \frac{1}{\tau}u - \left( \frac{1}{\tau} - 1 \right) v \right) + (1-\tau ) G(v).
\end{split} \end{equation*}
Now, the proof is complete.
\end{proof}

\Cref{Lem:M_gradient} means that  \cref{Alg:ASM} satisfies \cref{Ass:gradient} under \cref{Ass:stable,Ass:convex,Ass:local} with $\theta = \tau$, $L_K = \omega C_{0,K'}^q / \tau^{q-1}$.
Then by \cref{Lem:decreasing}, the energy sequence $\{ E( \un )\}$ generated by \cref{Alg:ASM} always decreases.
Thus, if we define the set $K_0 \subseteq V$ as~\cref{K0}, the sequence $\{ \un \}$ is contained in $K_0$.
Recall that we can choose $R_0 > 0$ satisfying~\cref{R0}.
In the following, we write $C_0 = C_{0,K_0'}$, where $K_0'$ is defined in the same way as~\cref{K'}.
If $F$ additionally satisfies \cref{Ass:sharp}, we write $\mu = \mu_{K_0}$.
We define the \textit{additive Schwarz condition number} $\kASM$ as follows:
\begin{equation}
\label{kASM}
\kASM = \frac{\omega C_0^q}{\tau^{q-1}}.
\end{equation}
Then the value of $\kASM$ depends on $\tau$, $\omega$, and $u^{(0)}$.
By \cref{Thm:gradient,Thm:gradient_uniform}, the following convergence theorems for \cref{Alg:ASM} are straightforward.

\begin{theorem}
\label{Thm:ASM}
Suppose that \cref{Ass:stable,Ass:convex,Ass:local} hold.
In \cref{Alg:ASM}, if $E(u^{(0)}) - E(u^*) \geq  \tau^{q-1} R_0^q \kASM$, then
\begin{equation*}
E(u^{(1)}) - E(u^*) \leq \left( 1 - \tau \left( 1 - \frac{1}{q} \right) \right) ( E(u^{(0)}) - E(u^*) ).
\end{equation*}
Otherwise, we have
\begin{equation*}
E(u^{(n)}) - E(u^*) \leq \frac{C_{q, \tau} R_0^q \kASM}{(n+1)^{q-1}}, \quad n \geq 0,
\end{equation*}
where $C_{q, \tau}$ is a positive constant defined in~\cref{Cq} depending on $q$ and $\tau$ only, $R_0$ was defined in~\cref{R0}, and $\kASM$ was defined in~\cref{kASM}.
\end{theorem}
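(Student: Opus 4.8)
The plan is to prove \Cref{Thm:ASM} as a direct corollary of the abstract convergence result \Cref{Thm:gradient}, exploiting the two structural lemmas already established. First I would invoke \Cref{Lem:ASM}, which shows that the sequence $\{ \un \}$ generated by \cref{Alg:ASM} coincides with the sequence produced by \cref{Alg:gradient} for the specific choice $B(u,v) = \M(u,v)$. This reduces the task to checking that the hypothesis of \cref{Thm:gradient}, namely \cref{Ass:gradient}, holds for this particular $B$, and then to translating the resulting estimates into the notation of the present section.

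The verification of \cref{Ass:gradient} is exactly the content of \Cref{Lem:M_gradient}: under \cref{Ass:stable,Ass:convex,Ass:local}, the functional $\M$ satisfies the two-sided bound required by \cref{Ass:gradient} with the parameter identifications $\theta = \tau$ and $L_K = \omega C_{0,K'}^q / \tau^{q-1}$, where $K'$ is the dilated set of~\cref{K'}. The decreasing property \Cref{Lem:decreasing} then confines $\{ \un \}$ to the sublevel set $K_0$ of~\cref{K0}, so that the operative constant is $L = L_{K_0} = \omega C_0^q / \tau^{q-1}$, which is precisely $\kASM$ by its definition in~\cref{kASM}; the radius $R_0$ of~\cref{R0} is unchanged.

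With these substitutions in hand, the conclusion follows by inserting $\theta = \tau$ and $L = \kASM$ into \Cref{Thm:gradient}. The threshold $\theta^{q-1} L R_0^q$ becomes $\tau^{q-1} R_0^q \kASM$, so the dichotomy of \Cref{Thm:ASM} matches that of \Cref{Thm:gradient}; the contraction factor $1 - \theta(1 - \frac{1}{q})$ becomes $1 - \tau(1 - \frac{1}{q})$; and the sublinear bound $C_{q,\theta} L R_0^q / (n+1)^{q-1}$ becomes $C_{q,\tau} R_0^q \kASM / (n+1)^{q-1}$, since the constant $C_{q,\theta}$ of~\cref{Cq} depends on $q$ and $\theta = \tau$ only. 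This yields both cases of the statement verbatim.

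I anticipate no genuine analytic obstacle, as the substantive work was carried out in \Cref{Lem:ASM,Lem:M_gradient} together with \Cref{Thm:gradient}. The only point demanding care is bookkeeping: one must confirm that the sublevel set $K_0$ driving the definition of $\kASM$ coincides with the set arising in the derivation of \Cref{Thm:gradient} when \cref{Alg:ASM} is regarded as an instance of \cref{Alg:gradient}, and that the auxiliary dilation $K_0'$ enters only through $C_0 = C_{0,K_0'}$. Once this identification is made explicit, the estimate is immediate.
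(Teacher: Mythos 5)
Your proposal is correct and follows exactly the paper's own route: the paper derives \Cref{Thm:ASM} as an immediate consequence of \Cref{Thm:gradient}, using \Cref{Lem:ASM} to identify \Cref{Alg:ASM} as an instance of \Cref{Alg:gradient} with $B = \M$ and \Cref{Lem:M_gradient} to verify \Cref{Ass:gradient} with $\theta = \tau$ and $L = \omega C_0^q/\tau^{q-1} = \kASM$. Your bookkeeping on $K_0$, $K_0'$, and $C_0 = C_{0,K_0'}$ matches the paper's conventions, so nothing is missing.
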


\begin{theorem}
\label{Thm:ASM_uniform}
Suppose that \cref{Ass:stable,Ass:convex,Ass:local,Ass:sharp} hold.
In \cref{Alg:ASM}, we have the following:
\begin{enumerate}
\item In the case $p = q$, we have
\begin{equation*}
\resizebox{0.9\textwidth}{!}{$ \displaystyle E(\un) - E(u^*) \leq \left(1 - \left( 1- \frac{1}{q} \right) \min \left\{ \tau, \left( \frac{\mu}{q \kASM} \right)^{\frac{1}{q-1}} \right\} \right)^n ( E(u^{(0)}) - E(u^*) ), \gap n \geq 0.$}
\end{equation*}
\item In the case $p > q$, if $E(u^{(0)}) - E(u^*) \geq p^{\frac{q}{p-q}} \tau^{\frac{p(q-1)}{p-q}} (\kASM^p / \mu^q)^{\frac{1}{p-q}}$, then
\begin{equation*}
E(u^{(1)}) - E(u^* ) \leq \left(1 - \tau \left( 1- \frac{1}{q} \right)\right) ( E(u^{(0)}) - E(u^*) ).
\end{equation*}
Otherwise, we have
\begin{equation*}
E(u^{(n)}) - E(u^*) \leq \frac{C_{p,q,\tau} \left( \kASM^p / \mu^q \right)^{\frac{1}{p-q}}}{(n+1)^{\frac{p(q-1)}{p-q}}}, \quad n \geq 0,
\end{equation*}
where $C_{p,q,\tau}$ is a positive constant defined in~\cref{Cpq} depending on $p$, $q$, and $\tau$ only, and $\kASM$ was defined in~\cref{kASM}.
\end{enumerate}
\end{theorem}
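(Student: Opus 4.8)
The plan is to obtain \cref{Thm:ASM_uniform} as a direct specialization of \cref{Thm:gradient_uniform}, exploiting the structural results already established for \cref{Alg:ASM}. The key point is that nothing in the additive Schwarz setting needs a fresh analysis: the abstract gradient machinery of \cref{Sec:Gradient} applies verbatim once the functional $B$ is identified with $\M$.

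First I would invoke \cref{Lem:ASM}, the generalized additive Schwarz lemma, which shows that the iterates $\{\un\}$ produced by \cref{Alg:ASM} coincide with those of \cref{Alg:gradient} for the particular choice $B(u,v) = \M(u,v)$. Consequently every conclusion of \cref{Thm:gradient_uniform} transfers to \cref{Alg:ASM}, provided its hypotheses---\cref{Ass:gradient} together with \cref{Ass:sharp}---are verified for this $B$. The sharpness hypothesis \cref{Ass:sharp} is assumed outright, with $\mu = \mu_{K_0}$ on the sublevel set $K_0$ of~\cref{K0}.

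Second I would appeal to \cref{Lem:M_gradient}: under \cref{Ass:stable,Ass:convex,Ass:local}, the functional $\M$ satisfies \cref{Ass:gradient} on every bounded convex $K$ with parameters $\theta = \tau$ and $L_K = \omega C_{0,K'}^q / \tau^{q-1}$, where $K'$ is the dilated set of~\cref{K'}. Since the energy sequence decreases (by \cref{Lem:decreasing} applied through \cref{Lem:ASM}), the entire orbit lies in $K_0$, so the relevant constant is $L = L_{K_0} = \omega C_0^q / \tau^{q-1}$ with $C_0 = C_{0,K_0'}$. Comparing with the definition~\cref{kASM} yields the central identity $L = \kASM$, which is what converts the abstract Lipschitz-type constant into the additive Schwarz condition number.

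Finally I would substitute the dictionary $\theta \mapsto \tau$, $L \mapsto \kASM$, $\mu \mapsto \mu$ into the three conclusions of \cref{Thm:gradient_uniform}. In the case $p=q$ the geometric factor $1-(1-1/q)\min\{\theta,(\mu/(qL))^{1/(q-1)}\}$ becomes exactly the stated rate in terms of $\tau$ and $\kASM$. In the case $p>q$, the threshold $\theta^{p(q-1)/(p-q)}p^{q/(p-q)}(L^p/\mu^q)^{1/(p-q)}$ and the sublinear bound $C_{p,q,\theta}(L^p/\mu^q)^{1/(p-q)}/(n+1)^{p(q-1)/(p-q)}$ transform into the displayed expressions, with $C_{p,q,\theta}=C_{p,q,\tau}$. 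Since the derivation is purely substitutional, there is no genuine analytic obstacle; the only step demanding real care is the bookkeeping in the third paragraph that matches $L_{K_0}$ with $\kASM$ through $C_0 = C_{0,K_0'}$, ensuring the condition-number notation of~\cref{kASM} is precisely the constant delivered by \cref{Lem:M_gradient}.
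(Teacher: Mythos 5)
Your proposal is correct and follows exactly the paper's own route: the paper proves \cref{Thm:ASM_uniform} precisely by combining \cref{Lem:ASM} (identifying \cref{Alg:ASM} as \cref{Alg:gradient} with $B = \M$) and \cref{Lem:M_gradient} (verifying \cref{Ass:gradient} with $\theta = \tau$, $L = \omega C_0^q/\tau^{q-1} = \kASM$), then reading off the conclusions of \cref{Thm:gradient_uniform}. Your bookkeeping of the substitution $\theta \mapsto \tau$, $L \mapsto \kASM$ is accurate, so nothing is missing.
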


In \cref{Thm:ASM,Thm:ASM_uniform}, we observe that the asymptotic convergence rate of \cref{Alg:ASM} becomes faster as $\kASM$ becomes smaller.
Therefore, getting sharp estimates for $C_0$, $\tau_0$, and $\omega_0$ is important in the analysis of additive Schwarz methods.
We will consider in \cref{Sec:Applications} how to estimate those constants.

\subsection{Relation to the classical additive Schwarz theory}
Here, we show that the additive Schwarz framework proposed in this section is a generalization of the classical theory for linear elliptic problems developed in~\cite{TW:2005}.
Throughout this section, $H$ denotes a Hilbert space.

Let $a(\cdot , \cdot )$:~$H \times H \rightarrow \mathbb{R}$ be a continuous and symmetric positive definite~(SPD) bilinear form on $H$, and $f \in H^*$.
We consider the variational problem
\begin{equation*}
a(u, v) = \left< f, v \right>, \quad v \in H.
\end{equation*}
The above problem is standard in the field of elliptic partial differential equations.
By the Lax--Milgram theorem, a unique solution $u^* \in H$ of the above problem is characterized as a solution of the minimization problem
\begin{equation}
\label{model_linear}
\min_{u \in  H} \left\{ F(u) := \frac{1}{2} a (u, u) - \left<f , u \right> \right\}.
\end{equation}
If one has
\begin{equation*}
a(u,v) = \left< Au, v \right>, \quad u, v \in H
\end{equation*}
for some continuous and SPD linear operator $A$:~$H \rightarrow H^*$, the energy functional $F(u)$ in~\cref{model_linear} is Frech\'{e}t differentiable with the derivative $F'(u) = Au - f \in H^*$ for $u \in H$.
Hence,~\cref{model_linear} is a particular instance of~\cref{model_gradient} and the theory developed in \cref{Sec:ASM} is applicable.
In this case, the Bregman distance of $F$ is given by
\begin{equation*}
D_F (u, v) = \frac{1}{2} a(u-v, u-v), \quad u, v \in H.
\end{equation*}
We equip $H$ with the energy norm $\| u \|_A = \sqrt{a(u, u)}$.
Then \cref{Ass:sharp} is true with $p = 2$ and $\mu_K = 1$ for all bounded and convex $K \subseteq H$.

In what follows, let an index $k$ run from $1$ to $N$.
Similarly to~\cref{decomposition}, we assume that $H$ admits a decomposition
\begin{equation*}
H = \sumk R_k^* H_k,
\end{equation*}
where $H_k$ is a Hilbert space and $R_k^*$:~$H_k \rightarrow H$ is a bounded linear operator with the surjective adjoint.
We set $d_k$ in~\cref{local_general} by
\begin{equation*}
d_k (w_k, v) = \frac{1}{2} \tilde{a}_k(w_k, w_k ), \quad w_k \in H_k,
\end{equation*}
for some continuous and SPD bilinear form $\tilde{a}_k (\cdot , \cdot)$ on $H_k$.
Note that the above definition of $d_k (w_k , v)$ is independent of $v$, so that we may simply write $d_k (w_k) = d_k (w_k , v)$ for $w_k \in V_k$ and $v \in V$.
In this setting, \cref{Ass:stable} with $q = 2$ is reduced to the following.

\begin{assumption}
\label{Ass:stable_linear}
There exists a constant $C_0 > 0$ which satisfies the following: for any $w \in H$, there exists $w_k \in H_k$, $1 \leq k \leq N$ such that
\begin{equation*}
w = \sumk \Rw
\end{equation*}
and
\begin{equation}
\label{stable_linear}
\sumk \tilde{a}_k ( w_k, w_k) \leq C_0^2 \| w \|_A^2.
\end{equation}
\end{assumption}

Compared to \cref{Ass:stable}, the dependency on the subset $K$ is dropped in \cref{Ass:stable_linear} since all terms in~\cref{stable_linear} are 2-homogeneous.
Then \cref{Ass:stable_linear} exactly agrees with~\cite[Assumption~2.2]{TW:2005}.
The following assumption is what \cref{Ass:convex} is reduced to.

\begin{assumption}
\label{Ass:convex_linear}
There exists a constant $\tau_0 > 0$ which satisfies the following:
for any $w_k \in H_k$, $1 \leq k \leq N$, and $\tau \in (0, \tau_0 ]$, we have
\begin{equation*}
a\left( \sumk \Rw, \sumk \Rw \right) \leq \frac{1}{\tau} \sumk a (\Rw, \Rw ).
\end{equation*}
\end{assumption}

We recall that~\cite[Assumption~2.3]{TW:2005}, also known as strengthened Cauchy--Schwarz inequalities on spaces $\{ H_k \}$, is written as follows:
there exists constants $\epsilon_{ij} \in [0, 1]$, $1 \leq i, j \leq N$, such that
\begin{equation}
\label{CS}
|a ( R_i^* w_i, R_j^* w_j ) | \leq \epsilon_{ij} a ( R_i^* w_i , R_i^* w_i )^{1/2} a (R_j^* w_j , R_j^* w_j )^{1/2}, \quad w_i \in H_i, \gap w_j \in H_j .
\end{equation}
Suppose that~\cref{CS} holds.
Then by the same argument as~\cite[Lemma~2.6]{TW:2005}, for $w_k \in H_k$ we have
\begin{equation*} \begin{split}
a\left( \sumk \Rw, \sumk \Rw \right) &= \sum_{i=1}^N \sum_{j=1}^N a ( R_i^* w_i, R_j^* w_j ) \\
&\leq \sum_{i=1}^N \sum_{j=1}^N \epsilon_{ij} a ( R_i^* w_i , R_i^* w_i )^{1/2} a (R_j^* w_j , R_j^* w_j )^{1/2} \\
&\leq \rho (\mathcal{E}) \sumk a ( \Rw, \Rw ),
\end{split} \end{equation*}
where $\rho (\mathcal{E} )$ is the spectral radius of the matrix $\mathcal{E} = [ \epsilon_{ij} ]_{i,j=1}^{N}$.
Therefore, $\tau_0 = 1 / \rho (\mathcal{E})$ satisfies \cref{Ass:convex_linear}.
In a trivial case of~\cref{CS} when $\epsilon_{ij} = 1$ for all $i$ and $j$, we have $\rho (\mathcal{E}) = N$ and it agrees with the trivial case $\tau_0 = 1/N$ of \cref{Ass:convex_linear} noted in the previous section.
In this sense, we can say that \cref{Ass:convex} is a generalization of~\cite[Assumption~2.3]{TW:2005}.
Finally, we consider a reduced version of \cref{Ass:local}.

\begin{assumption}
\label{Ass:local_linear}
There exists a constant $\omega_0 > 0$ which satisfies the following: for any $w_k \in H_k$, $1 \leq k \leq N$, we have
\begin{equation*}
a(R_k^* w_k, R_k^* w_k ) \leq \omega_0 \tilde{a}_k (w_k, w_k ).
\end{equation*}
\end{assumption}

\Cref{Ass:local_linear} has the same form as~\cite[Assumption~2.4]{TW:2005}.
In summary, \cref{Ass:stable,Ass:convex,Ass:local} can be regarded as generalizations of the three assumptions in the abstract convergence theory of Schwarz methods for linear elliptic problems presented in~\cite{TW:2005}.

Next, we claim that \cref{Lem:ASM} is a direct generalization of the well-known \textit{additive Schwarz lemma}~(see~\cite[Lemma~2.5]{TW:2005}), which plays a key role in the convergence analysis of Schwarz methods for linear problems.
Let
\begin{equation*}
\tilde{a}_k ( w_k, w_k ) = \langle \tilde{A}_k w_k, w_k \rangle, \quad w_k \in V_k,
\end{equation*}
for some continuous and SPD linear operator $\tilde{A}_k$:~$H_k \rightarrow H_k^*$.
We readily obtain
\begin{equation*}
d_k^* (p_k) = \frac{1}{2} \langle p_k, \tilde{A}_k^{-1} p_k \rangle, \quad p_k \in V_k^*.
\end{equation*}
For fixed $v \in V$, we write $\M (u) = \M (u, v)$, where $\M (u,v)$ was defined in~\cref{M}.
That is,
\begin{equation*} \begin{split}
\M (u) &= \tau \inf \left\{ \sumk \frac{\omega}{2} \tilde{a}_k (w_k, w_k ) : u-v = \tau \sumk \Rw, \gap w_k \in V_k \right\} \\
&= \frac{\omega}{2\tau} \inf \left\{ \sumk \tilde{a}_k (w_k, w_k ) : u-v = \sumk \Rw, \gap w_k \in V_k \right\}.
\end{split}\end{equation*}
By the same argument as~\cref{ASM_preconditioner}, we have
\begin{equation}
\label{ASM_preconditioner1}
\M^* (p) = \frac{\tau}{2\omega} \left< p,  \left( \sumk R_k^* \tilde{A}_k^{-1} R_k \right) p \right> + \left< p, v \right> , \quad p \in V^*.
\end{equation}
Dualizing~\cref{ASM_preconditioner1} yields
\begin{equation}
\label{ASM_preconditioner2}
\M(u) = \frac{\omega}{2 \tau} \left< \left( \sumk R_k^* \tilde{A}_k^{-1} R_k \right)^{-1}(u-v), u-v \right>, \quad u \in V.
\end{equation}
That is, the functional $\M$ is in fact a scaled quadratic form induced by the \textit{additive Schwarz preconditioner} $M$:~$V \rightarrow V^*$, which is defined by
\begin{equation*}
M = \left( \sumk R_k^* \tilde{A}_k^{-1} R_k \right)^{-1}.
\end{equation*}
Consequently, \cref{Lem:ASM,ASM_preconditioner2} imply that \cref{Alg:ASM} for~\cref{model_linear} is the preconditioned Richardson method
\begin{equation*}
\unn = \un - \frac{\tau}{\omega} M^{-1} (A\un - f), \quad n \geq 0.
\end{equation*}
Let $P_{\mathrm{ad}} = M^{-1} A$ be the additive operator introduced in~\cite[Section~2.2]{TW:2005}.
Then we have
\begin{equation*}
a(P_{\mathrm{ad}}^{-1}u, u) = \left< Mu, u \right> = \inf \left\{ \sumk \tilde{a}_k (u_k, u_k ): u = \sumk R_k^* u_k, \gap u_k \in V_k \right\},
\end{equation*}
which is the conclusion of the classical additive Schwarz lemma.
In this sense, we call \cref{Lem:ASM} the generalized additive Schwarz lemma. 
 
Under \cref{Ass:stable_linear,Ass:convex_linear,Ass:local_linear}, one can easily prove using~\cref{ASM_preconditioner2} that
\begin{equation*}
\frac{\tau_0 }{\omega_0} \| w \|_A^2 \leq \left< Mw, w \right> \leq C_0^2 \| w \|_A^2.
\end{equation*}
Therefore, the condition number of the preconditioned operator $P_{\mathrm{ad}}$ is bounded by $\omega_0 C_0^2 / \tau_0$.
This bound agrees with~\cite[Theorem~2.7]{TW:2005}.
Moreover, it agrees with~\cref{kASM} in the case $\tau = \tau_0$ and $\omega = \omega_0$.
Therefore, the additive Schwarz condition number $\kASM$ introduced in~\cref{Sec:ASM} generalizes the condition number of $P_{\mathrm{ad}}$.

\section{Overlapping domain decomposition}
\label{Sec:DD}
In this section, we present overlapping domain decomposition settings for finite element spaces that will be used in this paper.
In the remainder of the paper, let $\Omega$ be a bounded polygonal domain in $\mathbb{R}^d$, where $d$ is a positive integer.
The notation $A \lesssim B$ means that there exists a constant $c > 0$ such that $A \leq c B$, where $c$ is independent of the parameters $H$, $h$, and $\delta$ which are related to the geometry of domain decomposition and will be defined later.
We also write $A \approx B$ if $A \lesssim B$ and $B \lesssim A$.

As a coarse mesh, let $\T_H$ be a quasi-uniform triangulation of $\Omega$ with $H$ the maximal element diameter.
We refine the coarse mesh $\T_H$ to obtain a quasi-uniform triangulation $\T_h$ with $h<H$, which plays a role of a fine mesh.
Let $S_H (\Omega) \subset W_0^{1, \infty} (\Omega) $ and $S_h (\Omega) \subset W_0^{1, \infty} (\Omega)$ be the continuous, piecewise linear finite element spaces on $\T_H$ and $\T_h$ with the homogeneous essential boundary condition, respectively.
For sufficiently smooth functions, the nodal interpolation operators $I_H$ and $I_h$ onto $S_H (\Omega)$ and $S_h (\Omega)$, respectively, are well-defined.

We decompose $\Omega$ into $N$ nonoverlapping subdomains $\{ \Omega_k \}_{k=1}^N$ such that each $\Omega_k$ is the union of some coarse elements in $\T_H$, and the number of coarse elements  consisting of $\Omega_k$ is uniformly bounded.
For each $\Omega_k$, we make a larger region $\Omega_k'$ by adding layers of fine elements with the width $\delta$.
We define $S_h (\Omega_k ') \subset W_0^{1, \infty}(\Omega_k')$ as the continuous, piecewise linear finite element space on the $\T_h$-elements in $\Omega_k'$ with the homogeneous essential boundary condition.

In the additive Schwarz framework presented in \cref{Sec:ASM}, we set
\begin{equation}
\label{Vk}
V = S_h (\Omega) \quad \textrm{and}\quad V_k = S_h (\Omega_k'), \quad 1 \leq k \leq N.
\end{equation} 
We also define
\begin{equation*}
V_0 = S_H (\Omega).
\end{equation*}
We take $R_k^*$:~$V_k \rightarrow V$ as the natural extension operator for $1 \leq k \leq N$, and $R_0^*$:~$V_0 \rightarrow V$ as the natural interpolation operator.
Then it is clear that
\begin{equation}
\label{1L}
V = \sumk R_k^* V_k
\end{equation}
and
\begin{equation}
\label{2L}
V = R_0^* V_0 + \sumk R_k^* V_k.
\end{equation}
We say that an additive Schwarz method is said to be \textit{one-level} if it uses the space decomposition~\cref{1L}, while it is called \textit{two-level} if it uses~\cref{2L}.

\subsection{Coloring technique}
In \cref{Sec:ASM}, we noted that a constant $\tau_0$ in \cref{Ass:convex} larger than $1/N$ can be found by the coloring technique.
Now, we explain the details.
In the proposed method, we say that two spaces $V_i$ and $V_j$ are \textit{of the same color} if
\begin{multline}
\label{color}
E (v + R_i^* w_i + R_j^* w_j ) - E(v) \\
= \left( E(v + R_i^* w_i) - E(v) \right) + \left( E(v + R_j^* w_j) - E(v) \right), \quad v \in V, \gap w_i \in V_i, \gap w_j \in V_j.
\end{multline}
Inductively, one can prove the following: if $V_{k_1},\dots, V_{k_m}$ are of the same color, then we have
\begin{equation}
\label{color2}
E \left( v + \sum_{i=1}^{m} R_{k_i}^* w_{k_i} \right) - E(v) = \sum_{i=1}^{m} \left( E ( v + R_{k_i}^* w_{k_i} ) - E(v) \right), \quad v \in V, \gap w_{k_i} \in V_{k_i}.
\end{equation}
Assume that the local spaces $\{ V_k \}_{k=1}^N$ are classified into $N_c$ colors according to~\cref{color} for some $N_c \leq N$.
Let $\mathcal{I}_j$, $1 \leq j \leq N_c$ be the set of the indices $k$ such that $V_k$ is of the color $j$.
Then for $\tau \in (0, 1/N_c ]$, $v \in V$, and $w_k \in V_k$, we have
\begin{equation*} \begin{split}
(1-&\tau N) E(v) + \tau \sumk E(v + \Rw ) - E \left( v + \tau \sumk \Rw \right) \\
&\stackrel{\cref{color2}}{=} \tau  \sum_{j=1}^{N_c} \left( E \left( v + \sum_{k \in \mathcal{I}_j} w_k \right) - E(v) \right) - \left( E \left( v + \tau \sum_{j=1}^{N_c} \sum_{k \in \mathcal{I}_j} \Rw \right) - E(v) \right) \\
&= \tau  \sum_{j=1}^{N_c} E \left( v + \sum_{k \in \mathcal{I}_j} w_k \right) + (1 - \tau N_c ) E(v) - E \left( v + \tau \sum_{j=1}^{N_c} \sum_{k \in \mathcal{I}_j} \Rw \right)  \\
&\geq 0,
\end{split} \end{equation*}
where the last inequality is due to the convexity of $E$ and $1 - \tau N_c \geq 0$.
Therefore, \cref{Ass:convex} is true with $\tau_0 = 1/N_c$.

In most applications, $E(u)$ has the integral structure and naturally satisfies~\cref{color}.
As a descriptive example, let $V$ and $V_k$ be given by~\cref{Vk} and
\begin{equation*}
E(u) = \frac{1}{s} \intO |\nabla u|^s \,dx - \left< f, u \right>
\end{equation*}
for $s > 1$ and $f \in V^*$.
Then it is obvious that $V_i$ and $V_j$ are of the same color if $\bar{\Omega}_i'  \cap \bar{\Omega}_j' = \emptyset$.
Hence, for suitable overlap parameter $\delta$, we have
\begin{equation*}
N_c \leq \begin{cases} 2 & \textrm{ if } d = 1, \\ 4 & \textrm{ if } d = 2,\\ 8 & \textrm{ if } d = 3. \end{cases}
\end{equation*}
For two-level methods, we have $\tau_0 = 1/(N_c + 1)$ because of the coarse space $V_0$.
In summary, we have
\begin{equation}
\label{tau0}
\tau_0 = \begin{cases} \frac{1}{N_c} & \textrm{ for one-level}~\cref{1L}, \\ \frac{1}{N_c + 1} & \textrm{ for two-level}~\cref{2L}. \end{cases}
\end{equation}

We conclude the section by observing a special case when $V = H$ is a Hilbert space and
\begin{equation*}
E(u) = \frac{1}{2} \left< Au, u \right> - \left< f, u \right>
\end{equation*}
for a continuous, symmetric, positive definite linear operator $A$:~$H \rightarrow H^*$ and $f \in H^*$.
Then~\cref{color} reduces to
\begin{equation*}
\left< A R_i^* w_i , R_j^* w_j \right> = 0, \quad w_i \in V_i, \gap w_j \in V_j,
\end{equation*}
which agrees with~\cite[Section~2.5.1]{TW:2005}.
In this sense, the proposed coloring technique generalizes the theory developed in~\cite{TW:2005}.

\subsection{One-level domain decomposition}
First, we consider the one-level domain decomposition~\cref{1L}.
By~\cite[Lemma~3.4]{TW:2005}, we can choose a continuous and piecewise linear partition of unity $\{ \theta_k \}_{k=1}^N$ for $\Omega$ subordinate to the covering $\{ \Omega_k' \}_{k=1}^N$ satisfying~\cite[equations~(3.2) and~(3.3)]{TW:2005}.
Invoking~\cite[Lemmas~3.4 and~3.9]{TW:2005}, the following lemma is straightforward under the space decomposition~\cref{1L}.

\begin{lemma}
\label{Lem:1L}
Assume that the space $V$ is decomposed according to~\cref{1L}. 
For $w \in V$, we choose $w_k \in V_k$, $1 \leq k \leq N$ such that
\begin{equation}
\label{1L_decomp}
R_k^* w_k =  I_h (\theta_k w ).
\end{equation}
Then for $s \geq 1$, we have $w = \sumk \Rw$ and
\begin{equation*}
\sumk \| \Rw \|_{W^{1,s} ( \Omega )} \lesssim C_{N_c} \left( 1 + \frac{1}{\delta} \right) \| w \|_{W^{1,s} ( \Omega )},
\end{equation*}
where $C_{N_c}$ is a positive constant depending on the number of colors $N_c$ only.
\end{lemma}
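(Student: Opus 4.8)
The plan is to verify the two assertions separately: first the algebraic identity $w = \sumk \Rw$, and then the quantitative stability bound, with the gradient of the partition of unity supplying the $(1 + 1/\delta)$ factor and the coloring supplying the constant $C_{N_c}$.

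First I would establish $w = \sumk \Rw$ using the partition of unity. Since $\{\theta_k\}$ satisfies $\sumk \theta_k \equiv 1$ on $\Omega$ and each $\theta_k$ vanishes outside $\Omega_k'$ by \cite[equations~(3.2) and~(3.3)]{TW:2005}, at every fine-mesh node $x_i$ one has $(\Rw)(x_i) = I_h(\theta_k w)(x_i) = \theta_k(x_i)\, w(x_i)$, whence $\sumk (\Rw)(x_i) = \left( \sumk \theta_k(x_i) \right) w(x_i) = w(x_i)$. Both $\sumk \Rw$ and $w$ lie in $S_h(\Omega)$ and agree at every node, so they coincide. The same nodal reasoning shows that $I_h(\theta_k w)$ vanishes at all nodes outside $\Omega_k'$, hence $w_k \in V_k = S_h(\Omega_k')$ is well defined, which justifies the choice~\cref{1L_decomp}.

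The heart of the proof is the local estimate $\|\Rw\|_{W^{1,s}(\Omega)} = \|I_h(\theta_k w)\|_{W^{1,s}(\Omega_k')} \lesssim (1 + 1/\delta) \|w\|_{W^{1,s}(\Omega_k')}$ for each $k$, the $W^{1,s}$ analogue of the $H^1$ estimate in \cite[Lemma~3.9]{TW:2005}. For the $L^s$ part I would use $0 \leq \theta_k \leq 1$ together with the $L^s$-stability of the nodal interpolant on the finite element space (norm equivalence on the reference element plus affine scaling) to get $\|I_h(\theta_k w)\|_{L^s(\Omega_k')} \lesssim \|w\|_{L^s(\Omega_k')}$. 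For the gradient part I would work element by element, applying a discrete product rule to $I_h(\theta_k w)$: on each $\T_h$-element the interpolant of the product of the two piecewise-linear functions $\theta_k$ and $w$ is controlled, via inverse inequalities and local interpolation-error bounds, by $\|\theta_k \nabla w\|_{L^s} + \|w \nabla \theta_k\|_{L^s}$; invoking $0 \leq \theta_k \leq 1$ and the gradient bound $\|\nabla \theta_k\|_{L^\infty} \lesssim 1/\delta$ from \cite[equations~(3.2) and~(3.3)]{TW:2005} then yields $\|\nabla I_h(\theta_k w)\|_{L^s(\Omega_k')} \lesssim \|\nabla w\|_{L^s(\Omega_k')} + \tfrac{1}{\delta} \|w\|_{L^s(\Omega_k')}$. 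Combining the two parts gives the claimed local bound.

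Finally I would sum the local bounds over $k$ and exploit the finite-overlap structure of $\{\Omega_k'\}$: grouping the indices by the $N_c$ colors, within each color the supports are mutually disjoint, so the contributions of a single color are controlled by one global $W^{1,s}(\Omega)$ norm, and there are only $N_c$ colors. This produces a factor depending on $N_c$ alone and completes the estimate. The main obstacle I anticipate is the element-wise gradient estimate above: unlike the Hilbert-space case $s = 2$ treated in \cite{TW:2005}, the product $\theta_k w$ is only piecewise polynomial, and one must handle its nodal interpolation in the $W^{1,s}$ norm for general $s \geq 1$ by inverse estimates and scaling, keeping all constants independent of $h$, $H$, $\delta$, and $N$. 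The other delicate point is controlling the passage from the per-color sums to a single global norm so that the resulting constant genuinely depends on $N_c$ only.
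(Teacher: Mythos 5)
Your first two steps are correct and essentially reproduce what the paper itself relies on: the paper gives no detailed proof of this lemma but simply invokes \cite[Lemmas~3.4 and~3.9]{TW:2005}, which correspond exactly to your nodal-value identity and your local bound $\|I_h(\theta_k w)\|_{W^{1,s}(\Omega)} \lesssim (1 + 1/\delta)\|w\|_{W^{1,s}(\Omega_k')}$. The genuine gap is the final summation step---precisely the point you yourself flagged as delicate. Disjointness of the supports within one color makes the \emph{$s$-th powers} of the norms additive,
\begin{equation*}
\sum_{k \in \mathcal{I}_j} \|w\|_{W^{1,s}(\Omega_k')}^s \leq \|w\|_{W^{1,s}(\Omega)}^s ,
\end{equation*}
but the lemma's left-hand side is the sum of the norms themselves, and passing from this $\ell^s$-type bound to the $\ell^1$-sum costs a factor $m_j^{1-1/s}$ by H\"older's inequality, where $m_j \approx N/N_c$ is the number of subdomains of color $j$. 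Thus for $s>1$ your argument produces a constant that grows like $N^{1-1/s}$, not a constant depending on $N_c$ only; the assertion that ``the contributions of a single color are controlled by one global $W^{1,s}(\Omega)$ norm'' is valid only when $s=1$.

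Moreover, this loss is not repairable bookkeeping: with the decomposition \cref{1L_decomp} the factor $N^{1-1/s}$ genuinely occurs. Take $d=1$, $\delta \approx H \approx 1/N$, and $w \in S_h(\Omega)$ equal to $1$ on the middle half of $\Omega$ with fixed linear ramps to the boundary, so that $\|w\|_{W^{1,s}(\Omega)} \approx 1$. For each of the $\approx N$ interior subdomains one has $I_h(\theta_k w) = \theta_k$ (the partition of unity is piecewise linear on $\T_h$), and since $\theta_k$ must decay from $1$ to $0$ across a strip of width $\lesssim \delta$, Jensen's inequality gives $\|R_k^* w_k\|_{W^{1,s}(\Omega)} \geq \| \nabla \theta_k \|_{L^s(\Omega)} \gtrsim \delta^{1/s-1}$. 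Hence the left-hand side of the claimed estimate is $\gtrsim N\delta^{1/s-1} \approx N^{2-1/s}$, while the right-hand side is $\approx C_{N_c}(1+1/\delta) \approx N$; the ratio $N^{1-1/s}$ is unbounded for $s>1$. What your local estimates combined with the coloring argument actually prove is the $s$-th-power form
\begin{equation*}
\sum_{k=1}^{N} \|R_k^* w_k\|_{W^{1,s}(\Omega)}^s \lesssim C_{N_c} \left( 1 + \frac{1}{\delta^s} \right) \|w\|_{W^{1,s}(\Omega)}^s ,
\end{equation*}
the natural $W^{1,s}$ analogue of the classical $H^1$ estimate of \cite{TW:2005}; the first-power inequality as literally stated cannot be reached from it---or, by the example above, from any argument---with a constant independent of $N$ when $s>1$.
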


\subsection{Two-level domain decomposition}
There are several results on stable decompositions for the two-level domain decomposition~\cref{2L} which are counterparts to \cref{Lem:1L}.
If we choose a coarse component $w_0 \in V_0$ of $w\in V$ by the $L^2$-projection technique, we obtain the following estimate~\cite[Lemma~4.1]{TX:2002}.

\begin{lemma}
\label{Lem:2L}
Assume that the space $V$ is decomposed according to~\cref{2L}. 
For $w \in V$, let $w_0 \in V_0$ such that $R_0^* w_0$ is the $L^2$-projection of $w$, i.e.,
\begin{subequations}
\label{2L_decomp}
\begin{equation}
\intO (R_0^* w_0 - w) R_0^* \phi_0 \,dx  = 0, \quad \phi_0 \in V_0.
\end{equation}
Then we choose $w_k \in V_k$, $1 \leq k \leq N$ such that
\begin{equation}
R_k^* w_k =  I_h (\theta_k (w - R_0^* w_0 ) ).
\end{equation}
\end{subequations}
For $s \geq 1$, we have $w = R_0^* w_0 + \sumk \Rw$ and
\begin{equation*}
\|R_0^* w_0 \|_{W^{1,s} (\Omega)} +  \sumk \| \Rw \|_{W^{1,s} ( \Omega )} \lesssim C_{N_c} \left( 1 + \left(\frac{H}{\delta} \right)^{\frac{s-1}{s}} \right) \| w \|_{W^{1,s} ( \Omega )},
\end{equation*}
where $C_{N_c}$ is a positive constant depending on the number of colors $N_c$ only.
\end{lemma}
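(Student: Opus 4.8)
The plan is to mirror the structure of the one-level result in \cref{Lem:1L}, treating the coarse correction separately from the local pieces. First I would verify the decomposition identity. Writing $\tilde{w} := w - R_0^* w_0 \in S_h(\Omega)$ (note $R_0^* w_0 \in R_0^* V_0 \subset V$, so $\tilde w$ is a fine-grid function), and using that $\{\theta_k\}_{k=1}^N$ is a partition of unity subordinate to $\{\Omega_k'\}$ so that $\sumk \theta_k = 1$ on $\Omega$, one gets $\sumk I_h(\theta_k \tilde{w}) = I_h(\tilde{w}) = \tilde{w}$ because $I_h$ acts as the identity on $S_h(\Omega)$. Combined with $R_k^* w_k = I_h(\theta_k \tilde{w})$, this yields $w = R_0^* w_0 + \sumk \Rw$ as required.

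For the norm bound I would split the left-hand side into the coarse term $\|R_0^* w_0\|_{W^{1,s}(\Omega)}$ and the sum of the local terms. The coarse term is controlled by the $W^{1,s}$-stability of the $L^2$-projection onto $V_0 = S_H(\Omega)$ on the quasi-uniform coarse mesh, giving $\|R_0^* w_0\|_{W^{1,s}(\Omega)} \lesssim \|w\|_{W^{1,s}(\Omega)}$. For each local term, applying the product rule to $I_h(\theta_k \tilde{w})$ together with the approximation and inverse properties of $I_h$ produces a bound of the form $\|\Rw\|_{W^{1,s}(\Omega_k')}^s \lesssim \|\theta_k \nabla \tilde{w}\|_{L^s(\Omega_k')}^s + \|(\nabla\theta_k)\tilde{w}\|_{L^s(\Omega_k')}^s$, where $\|\nabla\theta_k\|_{L^\infty} \lesssim 1/\delta$ and the second term is supported only on the boundary strip of width $\delta$ inside $\Omega_k'$.

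The heart of the matter is this second term, $\tfrac{1}{\delta}\|\tilde{w}\|_{L^s(\text{strip})}$: the crude one-level argument would only yield the factor $1/\delta$, whereas the sharper $(H/\delta)^{(s-1)/s}$ must come from exploiting the coarse $L^2$-projection. The key is that $\tilde{w} = w - R_0^* w_0$ satisfies an approximation estimate $\|\tilde{w}\|_{L^s} \lesssim H\,\|w\|_{W^{1,s}}$ on each coarse patch, and that restricting $\|\tilde w\|_{L^s}$ to the strip of width $\delta$ along $\partial\Omega_k$ and balancing the measure of the strip (scaling like $\delta$) against the full $H$-approximation bound yields the fractional exponent $(s-1)/s$ on $H/\delta$. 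I expect this balancing step to be the main obstacle, since it requires a sharp trace/Poincar\'{e}-type estimate for $\tilde{w}$ on the boundary strip and a careful tracking of the $L^s$ scaling for general $s$, in contrast to the classical Hilbert-space case $s=2$ where the factor is the familiar $(H/\delta)^{1/2}$. This is precisely the local computation of~\cite[Lemma~4.1]{TX:2002}, which I would invoke here. Finally, summing the per-subdomain estimates and using that each point of $\Omega$ lies in at most $N_c$ of the regions $\Omega_k'$ (the finite-overlap/coloring structure) collects the constant $C_{N_c}$ and completes the proof.
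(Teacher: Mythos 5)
Your proposal is correct and takes essentially the same route as the paper: the paper does not prove \cref{Lem:2L} directly but simply cites~\cite[Lemma~4.1]{TX:2002}, which is precisely the argument you reconstruct (partition-of-unity splitting, $W^{1,s}$-stability of the coarse $L^2$-projection, and the strip estimate balanced against the $O(H)$ approximation bound to produce the exponent $(s-1)/s$), and you defer the key local computation to that same lemma. Your sketch, including the way the factor $(H/\delta)^{(s-1)/s}$ arises, is consistent with the cited result, so nothing further is needed.
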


In applications to nonsmooth optimization problems, we may need a decomposition different from \cref{Lem:2L} in order to satisfy~\cref{stable_nonsmooth}~\cite{BTW:2003,Tai:2003}.
Let $I_H^{\ominus}$:~$S_h (\Omega) \rightarrow S_H (\Omega)$ be the nonlinear interpolation operator defined in~\cite[Section~4]{Tai:2003}.
One may refer to~\cite{Badea:2006,Tai:2003} for some useful estimates related to $I_H^{\ominus}$.
Then we have the following estimate on a decomposition using $I_H^{\ominus}$~\cite[Proposition~4.1]{Badea:2006}.

\begin{lemma}
\label{Lem:2L_nonsmooth}
Assume that the space $V$ is decomposed according to~\cref{2L}. 
For $w \in V$, we define $w_0 \in V_0$ by
\begin{subequations} \label{2L_decomp_nonsmooth}
\begin{equation}
R_0^* w_0 = I_H^{\ominus}\left( \max (0, w) \right) -  I_H^{\ominus}\left( \max (0, -w) \right).
\end{equation}
Then we choose $w_k \in V_k$, $1 \leq k \leq N$ such that
\begin{equation}
R_k^* w_k =  I_h (\theta_k (w - R_0^* w_0 ) ).
\end{equation}
\end{subequations}
For $s \geq 1$, we have $w = R_0^* w_0 + \sumk \Rw$ and
\begin{equation*}
\|R_0^* w_0 \|_{W^{1,s} (\Omega)} +  \sumk \| \Rw \|_{W^{1,s} ( \Omega )} \lesssim C_{N_c} C_{d, s}(H,h) \left( 1 + \frac{H}{\delta} \right) \| w \|_{W^{1,s} ( \Omega )},
\end{equation*}
where $C_{N_c}$ is a positive constant depending on the number of colors $N_c$ only and
\begin{equation*}
C_{d,s} (H,h) = \begin{cases} 1 & \textrm{ if }\gap d=s=1 \textrm{ or } 1 \leq d < s, \\ \left( 1 + \log \frac{H}{h} \right)^{\frac{s-1}{s}} & \textrm{ if }\gap 1<d=s, \\ \left( \frac{H}{h} \right)^{\frac{d-s}{s}} & \textrm{ if }\gap 1 \leq s < d. \end{cases}
\end{equation*}
\end{lemma}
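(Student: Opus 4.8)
The plan is to carry out the standard two-level stable-decomposition argument in three separate pieces---the decomposition identity, the coarse stability, and the local stability---with the nonlinear operator $I_H^{\ominus}$ playing the role that the $L^2$-projection plays in \cref{Lem:2L}, so that the truncation structure demanded by~\cref{stable_nonsmooth} survives. The decomposition identity is immediate: writing $g := w - R_0^* w_0 \in S_h(\Omega)$ and using that $\{\theta_k\}$ is a partition of unity subordinate to $\{\Omega_k'\}$, the nodal interpolation $I_h$ is linear on $S_h(\Omega)$ and reproduces fine functions, so $\sumk I_h(\theta_k g) = I_h\big(\sum_k \theta_k g\big) = I_h(g) = g$, which rearranges to $w = R_0^* w_0 + \sumk \Rw$.

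The coarse estimate is the core of the argument, and here I would import the stability and approximation properties of $I_H^{\ominus}$ established in~\cite{Tai:2003,Badea:2006}. Concretely, for $v \in S_h(\Omega)$ these give a global stability bound $\|I_H^{\ominus} v\|_{W^{1,s}(\Omega)} \lesssim C_{d,s}(H,h)\|v\|_{W^{1,s}(\Omega)}$ and a local approximation bound $\|v - I_H^{\ominus} v\|_{L^s(\Omega_k')} \lesssim H\, C_{d,s}(H,h)\|v\|_{W^{1,s}(\Omega)}$. Applying both to $v = \max(0,w)$ and $v = \max(0,-w)$, using that truncation is a contraction on $W^{1,s}$ (hence $\|\max(0,\pm w)\|_{W^{1,s}(\Omega)} \leq \|w\|_{W^{1,s}(\Omega)}$), and recalling $w = \max(0,w) - \max(0,-w)$, I would sum the two contributions in the definition of $R_0^* w_0$. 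This yields the coarse stability $\|R_0^* w_0\|_{W^{1,s}(\Omega)} \lesssim C_{d,s}(H,h)\|w\|_{W^{1,s}(\Omega)}$ and, since $g$ is the interpolation error of these two pieces, the per-subdomain bound $\|g\|_{L^s(\Omega_k')} \lesssim H\, C_{d,s}(H,h)\|w\|_{W^{1,s}(\Omega)}$.

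The local estimate is then exactly the partition-of-unity computation behind \cref{Lem:1L} (that is,~\cite[Lemmas~3.4 and~3.9]{TW:2005}): for each $k$, $\|\Rw\|_{W^{1,s}(\Omega)} = \|I_h(\theta_k g)\|_{W^{1,s}(\Omega_k')} \lesssim \|g\|_{W^{1,s}(\Omega_k')} + \delta^{-1}\|g\|_{L^s(\Omega_k')}$, the factor $\delta^{-1}$ coming from $\|\nabla\theta_k\|_{L^\infty} \lesssim \delta^{-1}$. Summing over $k$ and collapsing the subdomain sums by the finite-overlap property (quantified by $N_c$), the gradient terms give $\lesssim C_{N_c} C_{d,s}(H,h)\|w\|_{W^{1,s}(\Omega)}$ while the $\delta^{-1}$ terms combine with the $L^s$ approximation bound of the previous step to produce the factor $H/\delta$. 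Adding the coarse bound---which is absorbed since $C_{d,s}(H,h) \geq 1$---gives the stated inequality.

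The main obstacle is the coarse step, and specifically the appearance of $C_{d,s}(H,h)$ rather than a pure constant. The truncation $\max(0,w)$ can concentrate the gradient, so controlling $I_H^{\ominus}$ of such a function in $W^{1,s}$ forces an inverse/Sobolev-embedding estimate at the mesh-ratio scale $H/h$: the supercritical range $d<s$ embeds into continuous functions and costs nothing, the borderline case $d=s$ produces the logarithmic factor $(1+\log(H/h))^{(s-1)/s}$, and the subcritical range $s<d$ produces the algebraic factor $(H/h)^{(d-s)/s}$. Establishing these sharp constants is precisely the delicate scaling analysis of~\cite{Tai:2003,Badea:2006}, which I would cite rather than reprove.
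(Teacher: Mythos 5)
Your proposal is correct and rests on essentially the same argument the paper relies on: the paper does not prove this lemma itself but quotes it directly from \cite[Proposition~4.1]{Badea:2006}, with the operator $I_H^{\ominus}$ and its stability/approximation estimates taken from \cite[Section~4]{Tai:2003}, and your three-step reconstruction (partition-of-unity identity, coarse $W^{1,s}$-stability and $L^s$-approximation bounds for $I_H^{\ominus}$ applied to $\max(0,\pm w)$, then the $\delta^{-1}$-weighted local bounds summed over subdomains) is precisely the proof given in that reference. Deferring the $C_{d,s}(H,h)$ estimates for $I_H^{\ominus}$ to \cite{Tai:2003,Badea:2006} rather than reproving them puts your argument on exactly the same footing as the paper's citation.
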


\section{Applications}
\label{Sec:Applications}
In this section, we present various applications of the proposed abstract convergence theory for additive Schwarz methods.
The proposed theory covers many interesting convex optimization problems: nonlinear elliptic problems, nonsmooth problems, and nonsharp problems.
It also gives a unified analysis with some other decomposition methods such as block coordinate descent methods and constraint decomposition methods.
The proposed theory can adopt stable decomposition estimates for Schwarz methods presented in existing works without modification.
This makes the convergence analysis of additive Schwarz methods easy and gives an equivalent or even better estimate for the convergence rate compared to existing works.

\subsection{Nonlinear elliptic problems}
We present applications of the proposed additive Schwarz method to some nonlinear elliptic partial differential equations on $\Omega$.
We consider the minimization problem
\begin{equation}
\label{s_Lap}
\min_{u \in W_0^{1, s}(\Omega )} \left\{ E(u) := \frac{1}{s} \intO |\nabla u|^s \,dx - \left< f, u \right> \right\}
\end{equation}
for some $s > 1$ such that $s \neq 2$ and $f \in W^{-1, s^*} (\Omega)$, where $s^*$ is the H{\"o}lder conjugate of $s$, i.e., $\frac{1}{s} + \frac{1}{s^*} = 1$.
The unique solution of~\cref{s_Lap} is characterized by a solution of the well-known $s$-Laplacian equation
\begin{align*} 
- \div \left( |\nabla u|^{s-2} \nabla u \right) &= f \quad\textrm{ in } \Omega, \\
u &= 0 \quad\textrm{ on } \partial \Omega .
\end{align*}
It is well-known that there exist two positive constants $\alpha_s$ and $\beta_s$ such that for any $u, v \in W_0^{1, s}(\Omega)$, we have
\begin{subequations}
\label{s>2}
\begin{equation}
\left< E'(u) - E'(v) , u - v \right> \geq \alpha_s \| u - v \|_{W^{1,s}(\Omega)}^s,
\end{equation}
\begin{equation}
\| E'(u)  - E'(v) \|_{W^{-1,s^*}(\Omega)} \leq \beta_s \left( \| u \|_{W^{1,s}(\Omega)} + \| v \|_{W^{1,s}(\Omega)} \right)^{s-2} \| u - v \|_{W^{1,s}(\Omega)}
\end{equation}
\end{subequations}
if $s > 2$ and
\begin{subequations}
\label{s<2}
\begin{equation}
\left< E'(u) - E'(v) , u - v \right> \geq \alpha_s \frac{\| u - v \|_{W^{1,s}(\Omega)}^2}{(\| u \|_{W^{1,s}(\Omega)} + \| v \|_{W^{1,s}(\Omega)})^{2-s}},
\end{equation}
\begin{equation}
\| E'(u)  - E'(v) \|_{W^{-1,s^*}(\Omega)} \leq \beta_s \| u - v \|_{W^{1,s}(\Omega)}^{s-1}
\end{equation}
\end{subequations}
if $1 <s <2$, where $\| \cdot \|_{W^{-1,s^*}(\Omega)}$ is the dual norm of $\| \cdot \|_{W^{1,s}(\Omega)}$; see~\cite{Ciarlet:2002}.

A conforming finite element approximation of~\cref{s_Lap} using $S_h (\Omega) \subset W_0^{1,s} (\Omega)$ is given by
\begin{equation}
\label{d_s_Lap}
\min_{u \in S_h(\Omega )} \left\{ E_h(u) := \frac{1}{s} \intO |\nabla u|^s \,dx - \left< f, u \right> \right\}.
\end{equation}
Clearly,~\cref{d_s_Lap} is an instance of~\cref{model_gradient} with
\begin{equation*}
V = S_h(\Omega ), \quad F(u) = \frac{1}{s} \intO |\nabla u|^s \,dx - \left< f, u \right>, \quad G(u) = 0.
\end{equation*}
One can show that $E_h$ is coercive without difficulty~\cite{TX:2002}.

We take any bounded and convex subset $K$ of $V$ and define $M_K > 0$ by
\begin{equation*}
M_K = \sup_{u \in K} \| u \|_{W^{1,s}(\Omega)} < \infty.
\end{equation*}
We choose $u,v \in K$ arbitrarily.
Note that $v + t(u-v) \in K$ for any $t \in [0,1]$.
It follows by the fundamental theorem of calculus that
\begin{equation} \begin{split}
\label{FTC}
D_F (u,v) &= \int_0^1 \left< F'(v + t(u-v)), u-v \right> \,dt - \left< F'(v) , u -v \right> \\
&= \int_0^1 \frac{1}{t} \left< F'(v + t(u-v)) - F'(v) , t(u-v) \right> \,dt.
\end{split} \end{equation}
If $s > 2$, combining~\cref{FTC,s>2}, we have
\begin{subequations} \label{d_s>2}
\begin{equation}
D_F(u,v) \geq \frac{\alpha_s}{s} \| u - v \|_{W^{1,s}(\Omega)}^s,
\end{equation}
\begin{equation}
D_F(u,v) \leq \frac{\beta_s (2M_K)^{s-2}}{2} \| u-v \|_{W^{1,s}(\Omega)}^2 .
\end{equation}
\end{subequations}
Similarly, if $1 <s <2$, then we obtain
\begin{subequations} \label{d_s<2}
\begin{equation}
D_F(u,v) \geq \frac{\alpha_s}{(2M_K)^{2-s}} \| u - v \|_{W^{1,s}(\Omega)}^2,
\end{equation}
\begin{equation}
D_F(u,v) \leq \frac{\beta_s}{s} \| u-v \|_{W^{1,s}(\Omega)}^s 
\end{equation}
\end{subequations}
by using~\cref{FTC,s<2}.

Suppose that we want to solve~\cref{d_s_Lap} by \cref{Alg:ASM}.
We should verify \cref{Ass:stable,Ass:convex,Ass:local} to ensure the convergence, and \cref{Ass:sharp} if possible.
If we use the domain decompositions given by either~\cref{1L} or~\cref{2L}, then \cref{Ass:convex} is straightforward with~\cref{tau0}.
\Cref{Ass:local} holds with $\omega_0 = 1$ in the case of the exact local solvers.
Moreover, invoking \cref{Prop:uniform} to either~\cref{d_s>2} or~\cref{d_s<2} implies \Cref{Ass:sharp}.

Next, we show that \cref{Ass:stable} is valid for the two domain decompositions~\cref{1L,2L}.
Take any bounded and convex subset $K$ of $V$ and let $u, v \in V$ with $w = u-v$.
For the one-level domain decomposition~\cref{1L}, we set $w_k \in V_k$, $1 \leq k \leq N$ as~\cref{1L_decomp}.
For the two-level case domain decomposition~\cref{2L}, we set $w_k \in V_k$, $0 \leq k \leq N$ as~\cref{2L_decomp}.
In the case of $s > 2$ and the one-level domain decomposition, by~\cref{d_s>2,Lem:1L} we have
\begin{equation*} \begin{split}
\sumk d_k (w_k, v) &\lesssim \sumk \| \Rw \|_{W^{1,s}(\Omega)}^2 \\
&\lesssim \left( 1 + \frac{1}{\delta^2} \right) \| w \|_{W^{1,s}(\Omega)}^2.
\end{split} \end{equation*}
Similarly, the following estimate can be obtained using~\cref{d_s>2,Lem:2L} in the two-level domain decomposition:
\begin{equation*}
d_0 (w_0, v) + \sumk d_k (w_k, v) \lesssim \left( 1 + \left(\frac{H}{\delta}\right)^{\frac{2(s-1)}{s}} \right) \| w \|_{W^{1,s}(\Omega)}^2.
\end{equation*}
Therefore, \cref{Ass:stable} is satisfied if $s > 2 $.
Results corresponding to the case $1<s<2$ can be obtained by the same argument.

In summary, by \cref{Thm:ASM_uniform} we have the following convergence results of \cref{Alg:ASM} for~\cref{d_s_Lap}.

\begin{theorem}
\label{Thm:s_Lap}
In \cref{Alg:ASM} for~\cref{d_s_Lap}, suppose that we set $\tau_0$ as~\cref{tau0} and use the exact local solvers.
If $E(u^{(0)}) - E(u^*)$ is small enough, then we have
\begin{align*}
E_h (\un) - E_h (u^*) &\lesssim \frac{1 + 1/\delta^q}{(n+1)^{\frac{p(q-1)}{p-q}}} \hspace{0.8cm} \textrm{for one-level}, \\
E_h (\un) - E_h (u^*) &\lesssim \frac{1 + (H/\delta)^{\frac{q(s-1)}{s}}}{(n+1)^{\frac{p(q-1)}{p-q}}} \quad \textrm{for two-level},
\end{align*}
where
\begin{align*}
p=s, \gap q=2 \quad &\textrm{ if }\gap s > 2, \\
p=2, \gap q=s \quad &\textrm{ if }\gap 1 < s < 2.
\end{align*}
\end{theorem}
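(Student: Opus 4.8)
The plan is to deduce the statement directly from the abstract convergence theorem \cref{Thm:ASM_uniform}, so that the whole task reduces to verifying \cref{Ass:stable,Ass:convex,Ass:local,Ass:sharp} for the discrete $s$-Laplacian energy $E_h$ and then reading off the constants $p$, $q$, $\mu$, and $\kASM$. Since~\cref{d_s_Lap} has $G = 0$, the nonsmooth requirement~\cref{stable_nonsmooth} is vacuous, and for the exact local solvers~\cref{exact_local} \cref{Ass:local} holds trivially with $\omega_0 = 1$; \cref{Ass:convex} holds with $\tau_0$ as in~\cref{tau0} by the coloring argument already carried out. Thus the substance of the proof lies entirely in \cref{Ass:stable} and \cref{Ass:sharp}, which together determine both the exponent in the denominator and the geometric factor in the numerator.

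First I would pin down $p$ and $q$. Applying \cref{Prop:uniform} to the lower Bregman bounds~\cref{d_s>2} (for $s > 2$) and~\cref{d_s<2} (for $1 < s < 2$) gives \cref{Ass:sharp} with $p = s$ and $p = 2$, respectively, and with $\mu$ depending only on $\alpha_s$ and on $M_K$, hence independent of $H$, $h$, $\delta$. For the stable decomposition, recall that with exact solvers $d_k(w_k, v) = D_F(v + \Rw, v)$, so the \emph{upper} Bregman bounds in~\cref{d_s>2,d_s<2} control each $d_k$ by $\|\Rw\|_{W^{1,s}(\Omega)}^q$ with $q = 2$ (for $s > 2$) and $q = s$ (for $1 < s < 2$). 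In both regimes $p > q$, so \cref{Thm:ASM_uniform} applies in its second case, producing the decay $(n+1)^{-p(q-1)/(p-q)}$ claimed in the statement.

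The core step is to verify \cref{Ass:stable} with the sharp dependence of $C_0$ on $\delta$ and $H$. Here I would feed the decompositions of \cref{Lem:1L} (one-level) and \cref{Lem:2L} (two-level) into the upper Bregman bounds. Those lemmas bound the sum of first powers $\sumk \|\Rw\|_{W^{1,s}(\Omega)}$, whereas \cref{Ass:stable} asks for a bound on $\sumk d_k \lesssim \sumk \|\Rw\|_{W^{1,s}(\Omega)}^q$; the bridge is the elementary inequality $\sumk a_k^q \le \bigl( \sumk a_k \bigr)^q$, valid for $q \ge 1$, followed by the simplification $(1 + x)^q \approx 1 + x^q$. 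This yields $C_0^q \lesssim 1 + 1/\delta^q$ for the one-level decomposition and $C_0^q \lesssim 1 + (H/\delta)^{q(s-1)/s}$ for the two-level decomposition, and hence the corresponding bounds on $\kASM = \omega C_0^q / \tau^{q-1}$ via~\cref{kASM}. Since $\mu$, $\tau$, and $\omega$ carry no dependence on $H$, $h$, or $\delta$, the entire geometric dependence of the rate in \cref{Thm:ASM_uniform} is concentrated in $\kASM$, which is precisely the factor recorded in the numerators of the claimed bounds.

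I expect the matching of the geometric exponents in this last step to be the main obstacle: the decomposition lemmas are stated in the $W^{1,s}$-norm as a sum of \emph{linear} norm contributions, while \cref{Ass:stable} requires the $q$-th power sum, so one must pass carefully between these scales and confirm that the exponent $q(s-1)/s$ collapses to the clean value $s-1$ when $q = s$ and to $2(s-1)/s$ when $q = 2$. The remaining bookkeeping — evaluating $p(q-1)/(p-q)$ on each range of $s$ and checking the ``small enough'' threshold that selects the asymptotic branch of \cref{Thm:ASM_uniform} — is routine once the constants above are in hand.
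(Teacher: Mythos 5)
Your proposal is correct and follows essentially the same route as the paper's own proof: verify \cref{Ass:convex} via the coloring constant~\cref{tau0}, \cref{Ass:local} with $\omega_0 = 1$ for exact solvers, \cref{Ass:sharp} by applying \cref{Prop:uniform} to the lower Bregman bounds in~\cref{d_s>2,d_s<2}, and \cref{Ass:stable} by feeding the decompositions of \cref{Lem:1L,Lem:2L} into the upper Bregman bounds (using the same power-sum bridge $\sumk a_k^q \leq ( \sumk a_k )^q$ that the paper uses implicitly), then conclude from the $p>q$ case of \cref{Thm:ASM_uniform}. Your identification of $p$, $q$, and the geometric factors matches the paper exactly, including its convention of recording the $\kASM$-bound itself in the numerator.
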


A remarkable property of~\cref{Alg:ASM} for~\cref{d_s_Lap} is that the convergence of the method is not significantly affected by the initial energy error $E_h(u^{(0)}) - E_h(u^*)$, even though the asymptotic convergence rate is only sublinear.
In~\cref{Thm:ASM_uniform}, we showed that the energy error decays linearly if it is sufficiently large.
Therefore, the number of iterations required to meet a prescribed stop condition does not become too large even if $E_h(u^{(0)}) - E_h(u^*)$ is very big.
We note that a similar discussion was done in~\cite{Beck:2015}.

Finally, we compare the above estimates with existing ones.
In~\cite{TX:2002}, it was proven that \cref{Alg:ASM} applied to~\cref{d_s_Lap} has the $O(1/n^{\frac{q(q-1)}{(p-q)(p+q-1)}})$ convergence rate of the energy error.
More recently, the $O(1/n^{\frac{q-1}{p-q}})$ convergence of the energy error was shown in~\cite{Badea:2006,BK:2012}.
Since
\begin{equation*}
\frac{q(q-1)}{(p-q)(p+q-1)} < \frac{q-1}{p-q} < \frac{p(q-1)}{p-q}
\end{equation*}
for $1<q<p$, we conclude that \cref{Thm:s_Lap} is sharper than the existing results mentioned above.

\subsection{Nonsmooth problems}
We deal with the problems of the form~\cref{model_gradient} with the nonzero nonsmooth parts, i.e., $G \neq 0$.
Suppose that $G$ satisfies the following assumption, which was previously stated in~\cite{Badea:2010,BK:2012}.

\begin{assumption}
\label{Ass:nonsmooth}
Let $\mathcal{N}_h$ be the set of vertices in the triangulation $\T_h$.
Then $G$ can be expressed as
\begin{equation*}
G(u) = \sum_{x \in \mathcal{N}_h} s_x (h) \phi (u(x))
\end{equation*}
for some convex functions $\phi$:~$\mathbb{R} \rightarrow \overline{\mathbb{R}}$ and $s_x(h) \geq 0$.
\end{assumption}

\Cref{Ass:nonsmooth} means that $G$ is the sum of pointwisely defined convex functions.
Various examples satisfying \cref{Ass:nonsmooth} can be found in~\cite{BK:2012}.
Here, we consider the following $L^1$-regularized obstacle problem~\cite{TSFO:2015}:
\begin{equation}
\label{L1_obstacle}
\min_{u \in H_0^1 (\Omega)} \left\{ E(u) := \frac{1}{2} \intO |\nabla u|^2 \,dx - \left< f, u \right> + \lambda \intO  |u| \,dx \right\},
\end{equation}
where $f \in H^{-1}(\Omega)$ and $\lambda > 0$.
Note that~\cref{L1_obstacle} has an equivalent variational inequality of the second kind of the form~\cref{optimality}.
We show that the additive Schwarz method for variational inequalities of the second kind proposed in~\cite{Badea:2010} can be represented in our framework.

A finite element approximation of~\cref{L1_obstacle} using $S_h (\Omega) \subset H_0^1 (\Omega)$ is written as
\begin{equation}
\label{d_L1_obstacle}
\min_{u \in S_h (\Omega)} \left\{ E_h (u) := \frac{1}{2} \intO |\nabla u|^2 \,dx - \left< f, u \right> + G_h (u) \right\},
\end{equation}
where $G_h(u)$ is the numerical integration of $\lambda |u|$ using the piecewise linear approximation, i.e.,
\begin{equation*}
G_h (u) = \lambda \intO I_h |u| \, dx.
\end{equation*}
It is clear that $G_h$ satisfies \cref{Ass:nonsmooth}.
We focus on the fact that~\cref{d_L1_obstacle} is an instance of~\cref{model_gradient} with
\begin{equation*}
V = S_h (\Omega) , \quad F(u) = \frac{1}{2}\intO |\nabla u|^2 \,dx - \left< f, u \right>, \quad G(u) = G_h (u).
\end{equation*}

We analyze the convergence behavior of \cref{Alg:ASM} applied to~\cref{d_L1_obstacle} with the space decompositions~\cref{1L,2L}.
Assume that we use the exact local solvers.
Then \cref{Ass:convex,Ass:local} are trivially satisfied with~\cref{tau0} and $\omega_0 = 1$, respectively.
\Cref{Ass:sharp} with $p = 2$ and $\mu \approx 1$ is verified by
\begin{equation*}
\frac{1}{2} \| u - v \|_{H^1 (\Omega)}^2 \approx \frac{1}{2} | u -v |_{H^1 (\Omega)}^2 = D_F (u, v) , \quad u,v \in V,
\end{equation*}
followed by an application of~\cref{Prop:uniform}, where we used the Poincar\'{e}--Friedrichs inequality for $H_0^1 (\Omega)$ in $\approx$.

Now, we prove \cref{Ass:stable} for two domain decompositions~\cref{1L,2L}.
Take any $u, v \in V$ and let $w = u-v$.
For the one-level domain decomposition~\cref{1L}, we set $w_k \in V_k$, $1\leq k \leq N$ as~\cref{1L_decomp}.
For the two-level case, we set $w_k \in V_k$, $0 \leq k \leq N$ as~\cref{2L_decomp_nonsmooth}.
Then using \cref{Lem:1L} and \cref{Ass:nonsmooth}, it is straightforward to show that \cref{Ass:stable} holds in the one-level case with $q = 2$ and
\begin{equation*}
C_{0,K} \lesssim 1 + \frac{1}{\delta }
\end{equation*}
for all $K$; see~\cite[Proposition~5.1]{Badea:2010}.
In addition, by using \cref{Lem:2L_nonsmooth} and closely following~\cite[Proposition~5.2]{Badea:2010}, \cref{Ass:stable} is satisfied for the two-level domain decomposition with $q = 2$ and
\begin{equation*}
C_{0,K} \lesssim C_{d,2}(H,h) \left(1 + \frac{H}{\delta} \right)
\end{equation*}
for all $K$.

In conclusion, invoking \cref{Thm:ASM_uniform} yields the following convergence theorem for \cref{Alg:ASM}  applied to~\cref{d_L1_obstacle}.

\begin{theorem}
\label{Thm:nonsmooth}
In \cref{Alg:ASM} for~\cref{d_L1_obstacle}, suppose that we set $\tau_0$ as~\cref{tau0} and use the exact local solvers.
Then we have
\begin{equation*}
\resizebox{0.85\textwidth}{!}{$ \displaystyle E_h(\un ) - E_h(u^*) \leq \left( 1- \frac{1}{2} \min \left\{ \tau, \frac{C}{ 1+ 1/\delta^2 } \right\} \right)^n ( E_h(u^{(0)}) - E_h(u^*) )  \quad \textrm{for one-level},$}
\end{equation*}
\begin{equation*}
\resizebox{\textwidth}{!}{$ \displaystyle E_h(\un) - E_h(u^*) \leq  \left( 1- \frac{1}{2} \min \left\{ \tau, \frac{C}{C_{d,2}(H,h)^2(1+(H/\delta)^2)} \right\} \right)^n ( E_h(u^{(0)}) - E_h(u^*) ) \gap\textrm{for two-level},$}
\end{equation*}
where $C > 0$ is a generic constant independent of $H$, $h$, and $\delta$.
\end{theorem}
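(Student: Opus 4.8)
The plan is to read the two estimates off \cref{Thm:ASM_uniform}, so the real work is the verification of its hypotheses for~\cref{d_L1_obstacle}; once these are secured the rates follow by substitution. Writing $F(u)=\frac{1}{2}\intO|\nabla u|^2\,dx-\langle f,u\rangle$ and $G=G_h$, I would first dispose of the three easy assumptions. With exact local solvers \cref{Ass:local} holds with $\omega_0=1$; the coloring argument furnishes \cref{Ass:convex} with $\tau_0$ as in~\cref{tau0}; and \cref{Ass:sharp} holds with $p=2$ and $\mu\approx 1$, since $D_F(u,v)=\frac{1}{2}|u-v|_{H^1(\Omega)}^2\approx\frac{1}{2}\|u-v\|_{H^1(\Omega)}^2$ by the Poincar\'e--Friedrichs inequality, so that \cref{Prop:uniform} applies.

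The crux is \cref{Ass:stable} with $q=2$, and within it the nonsmooth estimate~\cref{stable_nonsmooth}. For the one-level decomposition I would take the partition-of-unity splitting~\cref{1L_decomp} and bound the smooth part by $\sumk d_k(w_k,v)=\frac{1}{2}\sumk|\Rw|_{H^1(\Omega)}^2\le\frac{1}{2}\bigl(\sumk\|\Rw\|_{H^1(\Omega)}\bigr)^2$, to which \cref{Lem:1L} with $s=2$ applies and yields $C_{0,K}\lesssim 1+1/\delta$ uniformly in $K$; the inequality $\sumk G_k(w_k,v)\le G(u)+(N-1)G(v)$ then follows from the pointwise form of $G_h$ (\cref{Ass:nonsmooth}) together with a nodewise convexity estimate, using $\sumk\theta_k=1$ with $0\le\theta_k\le 1$. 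For the two-level case an $L^2$ coarse projection need not keep the coarse component compatible with~\cref{stable_nonsmooth}, so I would instead use the nonlinear coarse interpolation $I_H^{\ominus}$ through the decomposition~\cref{2L_decomp_nonsmooth} and \cref{Lem:2L_nonsmooth}, which is engineered precisely to preserve~\cref{stable_nonsmooth} while giving $C_{0,K}\lesssim C_{d,2}(H,h)(1+H/\delta)$.

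With $p=q=2$ established I would then invoke the $p=q$ branch of \cref{Thm:ASM_uniform}, whose linear factor is $1-\frac{1}{2}\min\{\tau,\mu/(2\kASM)\}$ with $\kASM=\omega C_0^2/\tau$ from~\cref{kASM}. Taking $\omega=1$, $\mu\approx 1$, and squaring the stability constants so that $C_0^2\lesssim 1+1/\delta^2$ in the one-level case and $C_0^2\lesssim C_{d,2}(H,h)^2(1+(H/\delta)^2)$ in the two-level case produces the two displayed bounds, the generic constant $C$ absorbing $\mu$, $\omega$, $\tau$, and the hidden constants. I expect~\cref{stable_nonsmooth} to be the main obstacle: the quadratic part splits automatically, whereas the nonsmooth functional does not split under an arbitrary decomposition, so convergence rests entirely on the special splittings~\cref{1L_decomp} and the $I_H^{\ominus}$-based~\cref{2L_decomp_nonsmooth}, whose coarse and local pieces are tailored to the pointwise structure of $G_h$.
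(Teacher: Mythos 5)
Your proposal is correct and follows essentially the same route as the paper: verify \cref{Ass:convex,Ass:local,Ass:sharp} exactly as you do, establish \cref{Ass:stable} with $q=2$ via the partition-of-unity splitting~\cref{1L_decomp} in the one-level case and the $I_H^{\ominus}$-based splitting~\cref{2L_decomp_nonsmooth} in the two-level case (the paper outsources the nodewise convexity details to~\cite[Propositions~5.1 and~5.2]{Badea:2010}, which you instead sketch directly), and then read the rates off the $p=q$ branch of \cref{Thm:ASM_uniform} with $\kASM = C_0^2/\tau$.
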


\Cref{Thm:nonsmooth} agrees with the existing results in~\cite{Badea:2010} in the sense that the linear convergence rate is dependent on the bounds for $C_{0,K}$.

We remark that constrained problems belong to the class of nonsmooth problems.
Indeed, for a nonempty, convex, and closed subset $K_h$ of $V = S_h (\Omega)$, the constrained minimization problem
\begin{equation*}
\min_{u \in K_h} F(u)
\end{equation*}
can be represented as an unconstrained and nonsmooth minimization problem
\begin{equation}
\label{d_obstacle}
\minu \left\{ F(u) + \chi_{K_h} (u) \right\},
\end{equation}
where the functional $\chi_{K_h}$ was defined in~\cref{chi}.
Therefore, additive Schwarz methods for constrained problems can be analyzed in the same way as above.
If we let $G = \chi_{K_h} $ in~\cref{model_gradient}, then \cref{Ass:nonsmooth} reduces to the following.

\begin{assumption}
\label{Ass:constraint}
Let $\theta \in S_h (\Omega)$ with $0 \leq \theta \leq 1$.
Then for $u ,v \in K_h$, we have $I_h (\theta u + (1-\theta)v) \in K_h$.
\end{assumption}

It is clear that \cref{Ass:constraint} holds when $K_h$ is defined in terms of pointwise constraints.
The same assumptions as \cref{Ass:constraint} were used in existing works~\cite{Badea:2006,BTW:2003} on obstacle problems.

\subsection{Absence of the sharpness}
The examples we had presented above satisfied \cref{Ass:sharp}.
Now, we provide an application of the proposed framework to a problem lacking the sharpness, i.e., not satisfying \cref{Ass:sharp}.
As a model problem, we consider the following:
\begin{equation}
\label{dual_TV}
\min_{\bu \in H_0 (\div ; \Omega)} \left\{ E(\bu) := \tF (\div \bu ) + \chi_K (\bu) \right\},
\end{equation}
where $\tF$:~$L^2 (\Omega) \rightarrow \mathbb{R}$ is a convex, Frech\'{e}t differentiable functional and $K$ is the subset of $H_0 (\div; \Omega)$ defined by
\begin{equation*}
K = \left\{ \bu \in H_0 (\div; \Omega) : |\bu | \leq 1 \gap\textrm{ a.e. in }\Omega \right\}.
\end{equation*}
We further assume that $\tF'$ is H{\"o}lder continuous with parameters $q-1 \in (0, 1]$ and $\tilde{L} > 0$, so that
\begin{equation}
\label{TV_Holder}
D_{\tF} (u, v) \leq \frac{\tilde{L}}{q} \|u-v \|_{L^2 (\Omega)}^q, \quad u,v \in L^2 (\Omega);
\end{equation} 
see~\cite[Lemma~2.1]{TX:2002}.
Problems of the form~\cref{dual_TV} are typical in mathematical imaging.
More precisely,~\cref{dual_TV} appears in Fenchel--Rockafellar dual problems of total variation regularized problems which are standard in mathematical imaging~\cite{HK:2004,LP:2019}.
Schwarz methods for~\cref{dual_TV} have been studied recently in~\cite{CTWY:2015,Park:2019}.

A discrete counterpart of~\cref{dual_TV} can be obtained by replacing $H_0 (\div; \Omega)$ by the lowest order Raviart--Thomas finite element space $\bS_h (\Omega)$~\cite{HHSVW:2019,LP:2019}:
\begin{equation}
\label{d_dual_TV}
\min_{\bu \in \bS_h (\Omega)} \left\{ E_h (\bu) := \tF(\div \bu ) + \chi_{K_h}(\bu) \right\}.
\end{equation}
In~\cref{d_dual_TV}, $K_h$ is the convex subset of $\bS_h (\Omega)$ given by
\begin{equation*}
K_h = \left\{ \bu \in \bS_h (\Omega) : \frac{1}{e} \int_{e} |\bu \cdot \mathbf{n}_e| \,dS \leq 1, \gap e\textrm{: interior faces of }\T_h \right\},
\end{equation*}
where $\mathbf{n}_e$ is the unit outer normal to $e$.
See, e.g.,~\cite{Oh:2013} for further properties of the space $\bS_h (\Omega)$.
We denote a solution of~\cref{d_dual_TV} by $\bu^*$.
We observe that~\cref{d_dual_TV} is of the form~\cref{model_gradient} with
\begin{equation*}
V = \bS_h, \quad F(\bu) = \tF (\div \bu), \quad G(\bu) = \chi_{K_h} (\bu).
\end{equation*}
The energy functional $E$ is coercive due to the $\chi_{K_h}$-term.
Because of the large null space of $\div$ operator,~\cref{dual_TV} does not satisfy \cref{Ass:sharp}.

Based on the overlapping domain decomposition $\{ \Omega_k \}$ introduced in \cref{Sec:DD}, we define
\begin{equation*}
V_k = \bS_h (\Omega_k'), \quad 1 \leq k \leq N,
\end{equation*}
where $\bS_h (\Omega_k')$ is the Raviart--Thomas finite element space on $\Omega_k'$ with the homogeneous essential boundary condition.
Then it satisfies that
\begin{equation}
\label{1L_TV}
V = \sumk R_k^* V_k,
\end{equation}
where $R_k^*$:~$V_k \rightarrow V$ is the natural embedding.

We investigate the convergence property of \cref{Alg:ASM} applied to~\cref{d_dual_TV} based on the space decomposition~\cref{1L_TV}.
If we use the exact local solvers, then \cref{Ass:convex,Ass:local} are trivial with $\tau_0 = 1/N_c$ and $\omega_0 = 1$.
In order to verify \cref{Ass:stable}, we choose any $\bu , \bv \in K_h$.
We define $\bw_k \in V_k$, $1 \leq k \leq N$ such that
\begin{equation*}
R_k^* \bw_k = \Pi_h (\theta_k (\bu - \bv)),
\end{equation*}
where $\Pi_h$ is the nodal interpolation operator onto $\bS_h (\Omega)$.
Then we clearly have $\bv + R_k^* \bw_k \in K_h$ and~\cref{stable_nonsmooth} holds.
Moreover, we get
\begin{equation*} \begin{split}
\sumk d_k ( \bw_k , \bv) &= \sumk D_{\tF \circ \div} (\bv + R_k^* \bw_k, \bv) \\
&\stackrel{\cref{TV_Holder}}{\leq} \frac{\tilde{L}}{q} \| \div R_k^* \bw_k \|_{L^2(\Omega)}^q \\
&\stackrel{\textrm{(a)}}{\lesssim} C_{N_c} \tilde{L} \left( 1 + \frac{1}{\delta^q} \right) \| \bu - \bv \|_{H(\div; \Omega)}^q,
\end{split} \end{equation*}
where $C_{N_c}$ is a positive constant depending on $N_c$, and (a) is due to~\cite[Proposition~4.1]{Park:2019}.
Hence, \cref{Ass:stable} also holds.

By \cref{Thm:ASM}, we have the following convergence theorem for \cref{Alg:ASM} applied to~\cref{d_dual_TV}.

\begin{theorem}
\label{Thm:dual_TV}
In \cref{Alg:ASM} for~\cref{d_dual_TV} with the space decomposition~\cref{1L_TV}, suppose that we set $\tau_0 = 1/N_c$ and use the exact local solvers.
We also assume that \cref{TV_Holder} holds.
If $E(u^{(0)}) - E(u^*)$ is small enough, then we have
\begin{equation*}
E_h(\un ) - E_h(u^*)
\lesssim \frac{1 + 1/\delta^q}{(n+1)^{q-1}}.
\end{equation*}
\end{theorem}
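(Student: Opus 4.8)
The plan is to derive this as an immediate corollary of the abstract convergence result \cref{Thm:ASM}, since the three structural hypotheses \cref{Ass:stable,Ass:convex,Ass:local} have already been verified for~\cref{d_dual_TV} in the discussion preceding the statement: \cref{Ass:convex} holds with $\tau_0 = 1/N_c$, \cref{Ass:local} holds with $\omega_0 = 1$ under the exact local solvers, and \cref{Ass:stable} holds with exponent $q$ (the H{\"o}lder exponent of $\tF'$) and stable-decomposition constant satisfying $C_0^q \lesssim 1 + 1/\delta^q$, as read off from the displayed chain ending in step (a). All that remains is to assemble these constants into the additive Schwarz condition number and to select the correct branch of \cref{Thm:ASM}.

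First I would compute $\kASM$. Recalling its definition~\cref{kASM}, and taking $\omega = \omega_0 = 1$ together with a fixed $\tau \in (0, \tau_0]$, the stable-decomposition bound gives
\begin{equation*}
\kASM = \frac{\omega C_0^q}{\tau^{q-1}} \lesssim \frac{1 + 1/\delta^q}{\tau^{q-1}} \lesssim 1 + \frac{1}{\delta^q},
\end{equation*}
where in the last step $\tau$ and $q$ are fixed and hence absorbed into the hidden constant. I would emphasize here that, because the $\div$ operator has a large kernel, $E_h$ fails the sharpness condition \cref{Ass:sharp}, so that only \cref{Thm:ASM} is available and not the sharper \cref{Thm:ASM_uniform}; this is exactly why the resulting rate is sublinear rather than linear.

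Next I would invoke \cref{Thm:ASM}. The hypothesis that $E(u^{(0)}) - E(u^*)$ is small enough places us precisely in its ``Otherwise'' branch, so that
\begin{equation*}
E_h(\un) - E_h(u^*) \leq \frac{C_{q,\tau} R_0^q \kASM}{(n+1)^{q-1}} \lesssim \frac{1 + 1/\delta^q}{(n+1)^{q-1}},
\end{equation*}
the last inequality following from the $\kASM$ estimate after absorbing the factor $C_{q,\tau}$ (which depends only on $q$ and $\tau$) and $R_0^q$ into the generic constant. The one step requiring genuine care is the treatment of $R_0$: I would argue that the iterates $\{\un\}$ lie in the sublevel set $K_0 \subseteq K_h$, that $K_h$ is bounded by its defining flux constraint, and \emph{crucially} that the radius $R_0$ in~\cref{R0} is therefore independent of the overlap width $\delta$, so that the entire $\delta$-dependence of the rate is carried by $\kASM$. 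Confirming that $R_0$ can indeed be folded into a constant independent of $H$, $h$, and $\delta$ is the only nontrivial bookkeeping; everything else is a direct substitution into \cref{Thm:ASM}.
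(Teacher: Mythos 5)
Your proposal is correct and follows essentially the same route as the paper: the paper's own proof is simply to invoke \cref{Thm:ASM} after the preceding verification of \cref{Ass:stable,Ass:convex,Ass:local} (with $\tau_0 = 1/N_c$, $\omega_0 = 1$, and $C_0^q \lesssim 1 + 1/\delta^q$), exactly as you do. Your additional bookkeeping on $\kASM$, the choice of the ``Otherwise'' branch, and the $\delta$-independence of $R_0$ (which indeed holds, most directly because $K_0$ and $u^*$ are defined independently of the space decomposition) only makes explicit what the paper leaves implicit.
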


Similarly to the case of~\cref{d_s_Lap}, one can observe from \cref{Thm:ASM_uniform} that the initial energy error $E_h (u^{(0)}) - E_h (u^*)$ does not affect the number of required iterations much.

Compared to the analysis in~\cite{Park:2019}, the result presented in \cref{Thm:dual_TV} only requires the H{\"o}lder continuity of $\tF'$, while~\cite{Park:2019} requires much stronger conditions: the Lipschitz continuity of $\tF'$ and the strong convexity of $\tF$.

\subsection{Inexact local solvers}
We present two notable instances of the proposed method with inexact local solvers: block coordinate descent methods and constraint decomposition methods.

Block coordinate descent methods are popular in convex optimization and there is a vast literature about them; for example, see~\cite{BT:2013,FR:2015,Tseng:2001}.
Here, we show that parallel block coordinate descent methods are instances of \cref{Alg:ASM} with inexact local solvers.
In \cref{Alg:ASM}, we set
\begin{equation*}
V = \prod_{k=1}^N V_k \gap \textrm{ with } \gap V_k = \mathbb{R}^{m_k}, \gap 1\leq k \leq N.
\end{equation*}
Let $\tilde{R}_k$:~$V \rightarrow V_k$ be the natural restriction operator, i.e.,
\begin{equation*}
u = [ \tilde{R}_k u ]_{k=1}^N := (\tilde{R}_1 u, \dots, \tilde{R}_N u), \quad u \in V.
\end{equation*}
We set $R_k^*$:~$V_k \rightarrow V$ to be the extension-by-zero operator.
Then we clearly have
\begin{equation*}
V = \sumk R_k^* V_k
\end{equation*}
and
\begin{equation*}
[u_k]_{k=1}^N = \sumk R_k^* u_k, \quad u_k \in V_k, \gap 1\leq k \leq N.
\end{equation*}
In addition, it is satisfied that
\begin{equation}
\label{RR}
\sumk R_k^* \tilde{R}_k = I.
\end{equation}
The following assumptions are imposed on $F$ and $G$.

\begin{assumption}
\label{Ass:Lip_block}
The functional $F$:~$V \rightarrow \mathbb{R}$ has the Lipschitz continuous derivative.
That is, there exists a constant $L > 0$ such that
\begin{equation*}
\| F'(u) - F'(v) \| \leq L \| u - v \|, \quad u,v \in V.
\end{equation*}
\end{assumption}

\begin{assumption}
\label{Ass:sep_block}
The functional $G$:~$V \rightarrow \mathbb{R}$ is block-separable.
That is, there exist functionals $G^k$:~$V_k \rightarrow \mathbb{R}$, $1 \leq k \leq N$, such that
\begin{equation*}
G \left( \left[ u_k \right]_{k=1}^N \right) = \sumk G^k ( u_k ).
\end{equation*}
\end{assumption}

In this setting, a simple parallel block coordinate descent method to solve~\cref{model_gradient} is presented in \cref{Alg:block}.

\begin{algorithm}[]
\caption{Parallel block coordinate descent method for~\cref{model_gradient}}
\begin{algorithmic}[]
\label{Alg:block}
\STATE Choose $u_k^{(0)} \in \dom G^k$, $1 \leq k \leq N$, and $\tau \in (0, 1/N ]$.
\FOR{$n=0,1,2,\dots$}
\item \vspace{-0.5cm} \begin{equation*}
\resizebox{0.9\textwidth}{!}{$ \displaystyle \begin{split}
v_k^{(n+1)} &= \argmin_{u_k \in V_k} \left\{ F(\un) + \langle F'(\un) , R_k^* (u_k - u_k^{(n)}) \rangle
+ \frac{L}{2} \| u_k - u_k^{(n)} \|^2 + G^k (u_k ) \right\}, \gap 1 \leq k \leq N \\
u_k^{(n+1)} &= (1-\tau )u_k^{(n)} + \tau v_k^{(n+1)}, \quad 1 \leq k \leq N
\end{split} $}
\end{equation*} \vspace{-0.4cm}
\ENDFOR
\end{algorithmic}
\end{algorithm}

In \cref{Alg:block}, let $\un = [u_k^{(n)}]_{k=1}^N$.
Then it is straightforward to observe that the sequence $\{ \un \}$ generated by \cref{Alg:block} is the same as the one generated by \cref{Alg:ASM} with $\tau_0  = 1/N$, $\omega = L$, and
\begin{align*}
d_k (w_k , v) &= \frac{1}{2} \| R_k^* w_k \|^2, \\
G_k(w_k, v) &= G^k ( \tilde{R}_k v + w_k ) + \sum_{j \neq k} G^j (\tilde{R}_j v)
\end{align*}
for $w_k \in V_k$, $v \in V$.

By using the Cauchy--Schwarz inequality and \cref{Ass:sep_block}, it is easy to check that \cref{Ass:stable} holds with $q= 2$ and $C_{0,K} = \sqrt{N}$ for all $K$.
Indeed, for any $u , v \in V$ with $u-v = [w_k]_{k=1}^N$ for $w_k \in V_k$, $1 \leq k \leq N$, it follows that
\begin{equation*}
\sumk d_k (w_k, v) = \frac{1}{2} \sumk \| R_k^* w_k \|^2 
\leq \frac{N}{2} \left\| \sumk R_k^* w_k \right\|^2 
= \frac{N}{2} \| u - v \|^2 
\end{equation*}
and
\begin{equation*} \begin{split}
\sumk G_k (w_k , v) &=  \sumk \left( G^k (\tilde{R}_k v + w_k ) + \sum_{j \neq k} G^j (\tilde{R}_j v) \right) \\
&= G \left( R_k^* (\tilde{R}_k v + w_k) \right) + (N-1) G \left( \sumk R_k^* \tilde{R}_k v_k \right) \\
&= G (u) + (N-1) G(v).
\end{split} \end{equation*}
\Cref{Ass:convex} is obvious.
\Cref{Ass:local} with $\omega_0 = L$ is a direct consequence of \cref{Ass:Lip_block}.
In conclusion, \cref{Ass:stable,Ass:convex,Ass:local} are verified and the $O(1/n)$ convergence of \cref{Alg:block} is obtained by \cref{Thm:ASM}.

Now, we turn our attention to constraint decomposition methods.
In~\cite{CTWY:2015,Tai:2003}, constraint decomposition methods were proposed as domain decomposition methods for nonlinear variational inequalities.
We show that those methods can be regarded as instances of \cref{Alg:ASM} with inexact local solvers.
In particular, we consider the one-level constraint decomposition method proposed in~\cite{Tai:2003} for~\cref{d_obstacle} with
\begin{equation*}
F(u) = \frac{1}{2} \intO |\nabla u|^2 \,dx - \left< f, u \right>;
\end{equation*}
the two-level method can be treated in a similar way.

We assume that the constraint  $K_h$ in~\cref{d_obstacle} is one-obstacle, i.e.,
\begin{equation*}
K_h = \left\{ u \in S_h (\Omega) : u \geq \underline{g} \right\}
\end{equation*}
for some $\underline{g} \in S_h (\Omega)$.
We also assume that the space $V = S_h (\Omega)$ is decomposed as~\cref{1L}.
We define operators $\tilde{R}_k$:~$V \rightarrow V_k$, $1 \leq k \leq N$ as
\begin{equation*}
\tilde{R}_k u = \left( I_h (\theta_k u) \right)|_{\Omega_k'}, \quad u \in V,
\end{equation*}
so  that~\cref{RR} holds.
If we set
\begin{equation*}
K_h^{k} = \left\{ u_k \in V_k : u_k \geq \tilde{R}_k \underline{g} \right\}, \quad 1 \leq k \leq N,
\end{equation*}
then it is clear that
\begin{equation*}
K_h = \sumk R_k^* K_h^k.
\end{equation*}
The constraint decomposition method proposed in~\cite{Tai:2003} in the above setting is summarized in \cref{Alg:constraint}.

\begin{algorithm}[]
\caption{Constraint decomposition method for~\cref{d_obstacle}}
\begin{algorithmic}[]
\label{Alg:constraint}
\STATE Choose $u^{(0)} \in K_h$ and $\tau \in (0, 1/N ]$.
\FOR{$n=0,1,2,\dots$}
\item \vspace{-0.5cm} \begin{eqnarray*}
v_k^{(n+1)} &\in& \argmin_{u_k \in V_k} \left\{ F \left( R_k^* u_k + \sum_{j \neq k} R_k^* \tilde{R}_k \un  \right) + \chi_{K_h^k} (u_k) \right\}, \quad 1 \leq k \leq N, \\
\unn &=& (1- \tau ) \un + \tau \sumk R_k^* v_k^{(n+1)}
\end{eqnarray*} \vspace{-0.4cm}
\ENDFOR
\end{algorithmic}
\end{algorithm}

One can check without major difficulty that \cref{Alg:constraint} is an instance of \cref{Alg:ASM} with $\tau_0 = 1/N$, $\omega = 1$, and
\begin{align*}
d_k (w_k, v) &= D_F (v + \Rw, v), \\
G_k (w_k, v) &= \chi_{K_h^k} (\tilde{R}_k v + w_k)
\end{align*}
for $w_k \in V_k$, $v \in V$.
In this sense, in order to prove the convergence of  \cref{Alg:constraint}, it suffices to verify \cref{Ass:stable,Ass:local}.

To verify \cref{Ass:stable}, for any $u, v \in \dom G$, we set $w_k = \tilde{R}_k u - \tilde{R}_k v$, $1 \leq k \leq N$.
Then we have
\begin{equation*}
u-v = \sumk \Rw.
\end{equation*}
Also, we get
\begin{equation*}
\sumk d_k (w_k , v ) \lesssim \left( 1 + \frac{1}{\delta^2} \right) \| u - v \|^2
\end{equation*}
by \cref{Lem:1L}, and 
\begin{equation*}
G_k (w_k , v) = \chi_{K_h^k} (\tilde{R}_k u) = 0, \quad 1 \leq k \leq N.
\end{equation*}
That is, \cref{Ass:stable} holds with $q = 2$ and
\begin{equation*}
C_{0,K} \lesssim 1 + \frac{1}{\delta}
\end{equation*}
for all K.

In \cref{Ass:local}, clearly we have $\omega_0 = 1$.
Moreover, we can prove
\begin{equation*}
\chi_{K_h} (v + \Rw) \leq \chi_{K_h^k} (\tilde{R}_k v  +w_k), \quad v \in \dom \chi_{K_h}, \gap w_k \in V_k,
\end{equation*}
as follows: for any interior node $x$ of $S_h (\Omega_k')$, we have
\begin{equation*} \begin{split}
(\tilde{R}_k v + w_k ) (x) \geq (\tilde{R}_k \underline{g})(x) \gap &\Leftrightarrow \gap \theta_k (x) v(x) + w_k (x) \geq \theta_k (x) \underline{g}(x) \\
&\Rightarrow \gap v(x) + w_k (x) \geq \underline{g}(x) \quad (\because v \in K_h ) \\
&\Leftrightarrow \gap (v + R_k^* w_k ) (x) \geq \underline{g}(x).
\end{split} \end{equation*}
Therefore, \cref{Ass:local} is proven.

Since the energy functional of~\cref{d_obstacle} satisfies \cref{Ass:sharp}, we conclude that \cref{Alg:constraint} converges linearly.
This result agrees with~\cite{Tai:2003}.

\section{Conclusion}
\label{Sec:Conclusion}
Motivated by the fact that additive Schwarz methods for linear elliptic problems can be represented as preconditioned Richardson methods, we showed that additive Schwarz methods for general convex optimization belong to a class of gradient methods.
From this observation, we presented a novel abstract convergence theory for additive Schwarz methods for convex optimization.
We also noted that the proposed theory directly generalizes the one presented in~\cite{TW:2005}, a standard framework for analyzing Schwarz methods for linear elliptic problems.
The proposed theory covers a fairly large range of convex optimization problems including constrained ones, nonsmooth ones, and nonsharp ones.
Moreover, the proposed theory is compatible with many existing works in the sense that it can adopt stable decomposition estimates from existing works with little modification.

There are several interesting topics for future research.
Due to the nonsymmetry of multiplicative Schwarz methods, they have no minimization structure like \cref{Lem:ASM}.
Since the proposed theory relies on the minimization structure of additive Schwarz methods, it is not applicable to multiplicative Schwarz methods.
Indeed,  in the field of mathematical optimization, analyzing multiplicative or alternating methods are considered to be much harder work than analyzing additive or parallel ones. 
Recently, the minimization structure of the symmetric block Gauss--Seidel method for quadratic programming was revealed in~\cite{LST:2019}.
We expect that a convergence theory for symmetric multiplicative Schwarz methods for general convex optimization can be designed by adopting the idea of~\cite{LST:2019}.

In the perspective of gradient methods, it is worth considering acceleration of additive Schwarz methods.
After a pioneering work of Nesterov~\cite{Nesterov:1983}, acceleration of gradient methods becomes a central topic in convex optimization.
In particular, an accelerated gradient method for the problem~\cref{model_gradient} was presented in~\cite{BT:2009}.
Recently, an accelerated block Jacobi method for a constrained quadratic optimization problem was proposed in~\cite{LP:2019b}.
Obtaining accelerated additive Schwarz methods for~\cref{model_gradient} by generalizing~\cite{LP:2019b} should be considered as a future work.

\appendix
\section{Technical proofs}
In this appendix, we provide missing proofs of lemmas and theorems that appeared in \cref{Sec:Pre,Sec:Gradient}.

\subsection{Proof of~\cref{Lem:infimal}}
\label{App:proof_infimal}
We note that the following proof only requires the vector space structure of spaces $W$ and $W_k$, $1 \leq k \leq N$; even the convexity of functionals $F_k$ is not assumed.

\begin{proof}[Proof of~\cref{Lem:infimal}]
In this proof, an index $k$ runs from $1$ to $N$.
Take any $w \in W$.
For $w_k \in W_k$ satisfying $w = \sumk A_k w_k$, by the definitions of infimal postcomposition and infimal convolution, we have
\begin{equation*}
\sumk  F_k (w_k) \geq \sumk (A_k \triangleright F_k) (A_k w_k)
\geq \left( \bigsquare_{k=1}^N (A_k \triangleright F_k) \right) (w).
\end{equation*}
Taking the infimum in the left-hand side of the above equation yields
\begin{equation*}
\left( \bigsquare_{k=1}^N (A_k \triangleright F_k) \right) (w) \leq \inf \left\{ \sumk F_k (w_k) : w = \sumk A_k w_k, \gap w_k \in W_k \right\} .
\end{equation*}

Now, we show the reverse direction.
For convenience, we write
\begin{equation*}
\Delta = \left( \bigsquare_{k=1}^N (A_k \triangleright F_k) \right) (w).
\end{equation*}
If $\Delta = \infty$, there is nothing to show.
For the case $\Delta \in \mathbb{R}$, choose any $\epsilon > 0$.
Then there exists $w^k \in V$ with $w = \sumk w^k$ such that
\begin{equation}
\label{infimal1}
\Delta \leq \sumk (A_k \triangleright F_k) (w^k) \leq \Delta + \frac{\epsilon}{2}.
\end{equation}
Thus $(A_k \triangleright F_k) (w^k)$ is finite for every $k$, and there exists $\bar{w}_k \in V_k$ with $w^k = A_k \bar{w}_k$ such that
\begin{equation}
\label{infimal2}
F_k (\bar{w}_k ) \leq (A_k \triangleright F_k ) (w^k ) + \frac{\epsilon}{2N}.
\end{equation}
Summation of~\cref{infimal1,infimal2} over all $k$ yields
\begin{equation*}
\sumk F_k (\bar{w}_k) \leq \sumk (A_k \triangleright F_k ) (w^k) + \frac{\epsilon}{2} \leq \Delta + \epsilon.
\end{equation*}
Since $w = \sumk w^k = \sumk A_k \bar{w}_k$ and $\epsilon$ was chosen arbitrary, we get
\begin{equation*}
\inf \left\{ \sumk F_k (w_k) : w = \sumk A_k w_k, \gap w_k \in W_k \right\} \leq \Delta.
\end{equation*}
Finally, we consider the case $\Delta = -\infty$.
Take any $M > 0$.
One can choose $w^k \in V$ with $w = \sumk w^k$ such that
\begin{equation}
\label{infimal3}
\sumk (A_k \triangleright F_k ) (w^k) \leq - 2M.
\end{equation}
We define the following two index sets $\mathcal{I}_1$ and $\mathcal{I}_2$ as follows:
\begin{equation*} \begin{split}
\mathcal{I}_1 &= \left\{ k : (A_k \triangleright F_k ) (w^k) \in \mathbb{R} \right\}, \\
\mathcal{I}_2 &= \left\{ k : (A_k \triangleright F_k ) (w^k) = -\infty \right\}. \\
\end{split} \end{equation*}
Clearly, $\mathcal{I}_1 \cup \mathcal{I}_2 = \left\{1, \dots, N \right\}$.
If $\mathcal{I}_2 = \emptyset$, there exist $\bar{w}_k \in V_k$ for all $k$ satisfying $w^k = A_k \bar{w}_k$ such that
\begin{equation}
\label{infimal4}
F_k (\bar{w}_k ) \leq (A_k \triangleright F_k ) (w^k) + \frac{M}{N}.
\end{equation}
Combining with~\cref{infimal3} followed by summing~\cref{infimal4} over all $k$ yields
\begin{equation}
\label{infimal5}
\sumk F_k (\bar{w}_k) \leq \sumk (A_k \triangleright F_k) (w^k) + M \leq -M.
\end{equation}
If $\mathcal{I}_2 \neq \emptyset$, one may choose $\bar{w}_k$ with $w^k = A_k \bar{w}_k$ such that
\begin{align*}
F_j (\bar{w}_j ) &\leq (A_j \triangleright F_j ) (w^j) + \frac{M}{N}, \hspace{2.5cm} j \in \mathcal{I}_1, \\
F_j (\bar{w}_j ) &\leq -\frac{1}{|\mathcal{I}_2|} \left( \sum_{i \in \mathcal{I}_1} (A_i \triangleright F_i ) (w^i) + 2M \right), \quad j \in \mathcal{I}_2.
\end{align*}
Summing $F_k (\bar{w}_k)$ over all $k$ yields
\begin{equation} 
\label{infimal6} \begin{split}
\sumk F_k (\bar{w}_k ) &= \sum_{j \in \mathcal{I}_1} F_j (\bar{w}_j) + \sum_{j \in \mathcal{I}_2} F_j (\bar{w}_j) \\
&\leq \left( \sum_{j \in \mathcal{I}_1} (A_j \triangleright F_j ) (w^j) + \frac{|\mathcal{I}_1| M}{N} \right) - \left( \sum_{j \in \mathcal{I}_1} (A_j \triangleright F_j ) (w^j) + 2M \right) \\
&= \left(-2 + \frac{|\mathcal{I}_1|}{N }\right) M \\
&< -M.
\end{split}\end{equation}
In both cases~\cref{infimal5,infimal6}, we conclude that
\begin{equation*}
\inf \left\{ \sumk F_k (w_k) : w = \sumk A_k w_k, \gap w_k \in W_k \right\} = -\infty ,
\end{equation*}
as $M$ can be arbitrarily large.
\end{proof}

\subsection{Proof of~\cref{Lem:decreasing}}
\label{App:proof_decreasing}
Take any $n \geq 0$.
It is obvious that there exists a bounded and convex subset $K$ of $V$ such that $\un, \unn \in K$.
By \cref{Ass:gradient} and the minimization property of $\unn$, we have
\begin{equation*} \begin{split}
E(\unn) &= F(\un) + \langle F'(\un), \unn - \un \rangle + D_F (\unn, \un ) + G(\unn) \\
&\leq Q (\unn, \un) \\
&\leq Q (\un, \un) \\
&= F(\un) + B(\un , \un ) \\
&\leq F(\un) + G(\un).
\end{split} \end{equation*}
Therefore, we conclude that $E(\unn) \leq E(\un)$.

\subsection{Proof of \cref{Thm:gradient}}
\label{App:proof_gradient}
In order to estimate the convergence rate of \cref{Alg:gradient}, we need the following useful lemmas.

\begin{lemma}
\label{Lem:recur}
Let $\left\{ a_n \right\}$ be a sequence of positive real numbers which satisfies
\begin{equation*}
a_n - a_{n+1} \geq C a_n^{\gamma}, \quad n \geq 0,
\end{equation*}
for some $C > 0$ and $\gamma > 1$.
Then with $\beta = \frac{1}{\gamma - 1}$, we have
\begin{equation*}
a_n \leq \frac{1}{(n+1)^{\beta}} \max \left\{ a_0, \left( \frac{2^{\beta} - 1}{C} \right)^{\beta} \right\}, \quad n \geq 0.
\end{equation*}
\end{lemma}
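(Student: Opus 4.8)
The plan is to linearise the non-linear recursion by a reciprocal-power change of variable, telescope the resulting estimate, and then convert it into the stated form by elementary bookkeeping. A preliminary observation that I will use throughout is that the hypothesis forces the sequence to be decreasing: since each $a_n>0$ and $a_n-a_{n+1}\ge C a_n^{\gamma}>0$, we have $0<a_{n+1}\le a_n$ for all $n$. This monotonicity is what legitimises the comparison estimates below.

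The engine is the substitution $b_n=a_n^{-(\gamma-1)}=a_n^{-1/\beta}$. The function $f(t)=t^{-(\gamma-1)}$ is decreasing and convex on $(0,\infty)$, with $f'(t)=-(\gamma-1)t^{-\gamma}$, so for $0<a_{n+1}\le a_n$ the mean value theorem gives some $\xi\in[a_{n+1},a_n]$ with
\[
b_{n+1}-b_n = f(a_{n+1})-f(a_n) = (\gamma-1)\,\xi^{-\gamma}\,(a_n-a_{n+1}) \ge (\gamma-1)\,a_n^{-\gamma}\cdot C a_n^{\gamma} = C(\gamma-1),
\]
where I used $\xi\le a_n\Rightarrow \xi^{-\gamma}\ge a_n^{-\gamma}$ together with $a_n-a_{n+1}\ge C a_n^{\gamma}$. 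Telescoping this linear lower bound and recalling $\gamma-1=1/\beta$ yields $b_n\ge b_0+(C/\beta)n$, that is,
\[
a_n \le \bigl(a_0^{-1/\beta} + (C/\beta)\,n\bigr)^{-\beta}, \qquad n\ge 0 .
\]

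It then remains to dominate this right-hand side by $\frac{1}{(n+1)^{\beta}}\max\{a_0,(\tfrac{2^{\beta}-1}{C})^{\beta}\}$. Writing $A$ for this maximum and raising both sides to the power $-1/\beta$ (which reverses the inequality), the claim becomes the affine-in-$n$ estimate $A^{1/\beta}\bigl(a_0^{-1/\beta}+(C/\beta)n\bigr)\ge n+1$: the constant term holds because $A\ge a_0$, and the linear term requires the slope condition $A^{1/\beta}C/\beta\ge 1$. I expect this last scalar comparison to be the main obstacle, since with the choice $A^{1/\beta}=(2^{\beta}-1)/C$ it reduces to an elementary but slightly delicate inequality relating $2^{\beta}-1$ and $\beta$, which must be checked to close the argument. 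An equivalent and arguably cleaner route to the same conclusion is a direct induction on $n$: the base case is $a_0\le A$, and in the inductive step one splits according to the size of $a_n$ — if $a_n\le A/(n+2)^{\beta}$ the conclusion is immediate from monotonicity, while if $a_n>A/(n+2)^{\beta}$ one inserts $a_{n+1}\le a_n-C a_n^{\gamma}$ and reduces, at the worst (tight) value of $a_n$, to the scalar inequality $CA^{1/\beta}\ge (n+1)\bigl(1-(\tfrac{n+1}{n+2})^{\beta}\bigr)$. Either way the decisive point is the same scalar estimate, with the two-term maximum defining $A$ absorbing the initial datum $a_0$ and the tail behaviour separately.
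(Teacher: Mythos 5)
Your telescoping argument is the right engine, and it is worth saying up front that the paper offers no internal proof to compare against: its ``proof'' of \cref{Lem:recur} is the single line ``See \cite[Lemma~1]{HLL:2007}.'' The substitution $b_n=a_n^{-1/\beta}$, the mean value estimate $b_{n+1}-b_n\geq C(\gamma-1)=C/\beta$, and the resulting sharp bound $a_n\leq\bigl(a_0^{-1/\beta}+(C/\beta)n\bigr)^{-\beta}$ are all correct. But the step you flagged as ``elementary but slightly delicate'' and left unchecked is a genuine gap, and it is fatal rather than delicate: your slope condition with $A^{1/\beta}=(2^\beta-1)/C$ is exactly $2^\beta-1\geq\beta$, and since $g(\beta)=2^\beta-1-\beta$ is convex with $g(0)=g(1)=0$, this inequality holds if and only if $\beta\geq 1$. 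So your argument closes only in the regime $1<\gamma\leq 2$; for $\gamma>2$ (i.e.\ $\beta\in(0,1)$) it fails, and your inductive variant hits the same wall, since its scalar inequality tends to $\beta$ as $n\to\infty$ and hence again demands $CA^{1/\beta}\geq\beta$.

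Moreover, no patch of the last step is possible, because for $\beta\in(0,1)$ the lemma \emph{as stated} is false. Take $\gamma=3$, $C=1$, $a_0=1/2$ and the extremal sequence $a_{n+1}=a_n-a_n^3$: with $b_n=a_n^{-2}$ one gets $b_{n+1}-b_n=(2-1/b_n)/(1-1/b_n)^2\downarrow 2$, so $b_n=2n+O(\log n)$ and $a_n(n+1)^{1/2}\to 1/\sqrt{2}\approx 0.707$, which exceeds $(\sqrt{2}-1)^{1/2}\approx 0.644$; numerically the stated bound is already violated at $n=13$ ($a_{13}\approx 0.17248$ versus $0.64359/\sqrt{14}\approx 0.17201$). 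In general the worst-case sequence satisfies $a_n(n+1)^{\beta}\to(\beta/C)^{\beta}$, so any valid constant must be at least $(\beta/C)^{\beta}$, which is strictly larger than $\bigl((2^\beta-1)/C\bigr)^{\beta}$ when $\beta<1$. The correct statement is the one your telescoped bound actually proves, namely $a_n\leq (n+1)^{-\beta}\max\bigl\{a_0,(\beta/C)^{\beta}\bigr\}$: the constant term is handled by $A\geq a_0$ and the slope condition $A^{1/\beta}C/\beta\geq 1$ holds by construction; for $\beta\geq 1$ this implies the stated lemma because then $(\beta/C)^{\beta}\leq\bigl((2^\beta-1)/C\bigr)^{\beta}$. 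This distinction matters for the paper itself: \cref{Lem:recur} is invoked with $\beta=q-1$ and the applications allow $q\in(1,2)$ (the $s$-Laplacian with $1<s<2$ and the dual total variation problem), so the constant $q(2^{q-1}-1)/(q-1)$ in \cref{Cq} should be replaced by $q$, i.e.\ by the constant coming from $(\beta/C)^{\beta}$; the $O(1/n^{q-1})$ rates in \cref{Thm:gradient,Thm:ASM} are unaffected.
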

\begin{proof}
See~\cite[Lemma~1]{HLL:2007}.
\end{proof}

\begin{lemma}
\label{Lem:q_min}
Let $a,b > 0$, $q > 1$, and $\theta \in (0, 1]$.
The minimum of the function $g(t) = \frac{a}{q}t^q - bt$, $t \in [0, \theta]$, is given as follows:
\begin{equation*}
\mint g(t) = \begin{cases} \frac{a}{q}\theta^q - b\theta \leq -b\theta \left( 1 - \frac{1}{q} \right) & \textrm{ if }\gap a \theta^{q-1} - b \leq 0, \\
-b \left( \frac{b}{a} \right)^{\frac{1}{q-1}} \left( 1 - \frac{1}{q} \right) & \textrm{ if }\gap  a \theta^{q-1} - b > 0. \end{cases}
\end{equation*}
\end{lemma}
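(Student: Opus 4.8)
The plan is to minimize the smooth one-variable function $g(t) = \frac{a}{q}t^q - bt$ over the compact interval $[0,\theta]$ by the standard first-derivative analysis, then split into cases according to whether the unconstrained critical point lies inside or outside the interval. Since $g$ is differentiable on $[0,\theta]$ with $g'(t) = a t^{q-1} - b$ and $g''(t) = a(q-1)t^{q-2} > 0$ for $t > 0$, the function $g$ is strictly convex, so $g'$ is strictly increasing and the minimizer is determined by the sign of $g'$ at the endpoints. The natural case distinction is exactly the one appearing in the statement: whether $g'(\theta) = a\theta^{q-1} - b$ is nonpositive or positive.

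\emph{First I would} treat the case $a\theta^{q-1} - b \leq 0$. Here $g'(\theta) \leq 0$, and since $g'$ is increasing we have $g'(t) \leq 0$ on all of $[0,\theta]$, so $g$ is nonincreasing and attains its minimum at the right endpoint $t = \theta$. This gives $\min_{t \in [0,\theta]} g(t) = \frac{a}{q}\theta^q - b\theta$. To obtain the stated upper bound, I would use the case hypothesis $a\theta^{q-1} \leq b$, which gives $\frac{a}{q}\theta^q = \frac{1}{q}(a\theta^{q-1})\theta \leq \frac{1}{q} b\theta$; substituting yields $\frac{a}{q}\theta^q - b\theta \leq \frac{1}{q}b\theta - b\theta = -b\theta(1 - \frac{1}{q})$, which is precisely the claimed inequality.

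\emph{Second I would} treat the case $a\theta^{q-1} - b > 0$. Now $g'(\theta) > 0$ while $g'(0^+) = -b < 0$, so by the intermediate value property the unique stationary point $t^* = (b/a)^{1/(q-1)}$ solving $a t^{q-1} = b$ lies in the open interval $(0, \theta)$, and by strict convexity it is the global minimizer on $[0,\theta]$. Evaluating, $g(t^*) = \frac{a}{q}(t^*)^q - b t^*$; using $a(t^*)^{q-1} = b$, the first term becomes $\frac{1}{q}(a(t^*)^{q-1})t^* = \frac{1}{q}b t^*$, so $g(t^*) = \frac{1}{q}bt^* - bt^* = -bt^*(1 - \frac{1}{q}) = -b(b/a)^{1/(q-1)}(1 - \frac{1}{q})$, matching the second line of the statement.

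\emph{The main obstacle}, such as it is, is merely bookkeeping: confirming that $t^* \in (0,\theta)$ strictly in the second case (so that the interior critical point genuinely governs the minimum) and correctly simplifying $(t^*)^q = t^* \cdot (t^*)^{q-1} = t^*(b/a)$ via the defining relation. These are routine algebraic manipulations using the identity $a(t^*)^{q-1} = b$, and no genuine difficulty arises since the objective is strictly convex and one-dimensional.
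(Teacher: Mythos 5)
Your proof is correct and is precisely the elementary argument the paper has in mind: the paper's own proof of this lemma is just the remark ``It is elementary,'' and your case split on the sign of $g'(\theta) = a\theta^{q-1} - b$, with the endpoint minimum in the first case and the interior critical point $t^* = (b/a)^{1/(q-1)}$ in the second, is the standard way to fill it in. All steps check out, including the bound $\frac{a}{q}\theta^q - b\theta \leq -b\theta\left(1 - \frac{1}{q}\right)$ obtained from $a\theta^{q-1} \leq b$.
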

\begin{proof}
It is elementary.
\end{proof}

We notice that the proof of \cref{Thm:gradient} is motivated by~\cite[Theorem~4]{Nesterov:2013}, where a special case, forward-backward splitting with $q=2$ and $\theta = 1$, was analyzed.

\begin{proof}[Proof of~\cref{Thm:gradient}]
Take any $n \geq 0$.
For $u \in K_0$, we write
\begin{equation}
\label{utheta}
u_{\theta} = \frac{1}{\theta} u - \left( \frac{1}{\theta} - 1 \right) \un ,
\end{equation}
so that $u - \un = \theta ( u_{\theta} - \un )$.
Note that if we set $u = tu^* + (1 - t ) \un$ for $t \in [0, \theta]$, then $u \in K_0$ and
\begin{equation}
\label{utheta2}
u_{\theta} = \frac{t}{\theta} u^* + \left( 1 - \frac{t}{\theta} \right) \un \in K_0.
\end{equation} 
We denote $E(\un) - E(u^*)$ by $\zeta_n$.
It follows that
\begin{equation}
\label{gradient1} 
\resizebox{\textwidth}{!}{$ \displaystyle \begin{split}
E(&\unn) = F(\un) + \langle F'(\un) , \unn - \un \rangle + D_F (\unn, \un) + G(\unn) \\
&\stackrel{\textrm{(a)}}{\leq}  Q_n (\unn) \\
&= \min_{u \in K_0} \left\{ F(\un) + \langle F'(\un) , u - \un \rangle + B (u, \un) \right\} \\
&\stackrel{\textrm{(b)}, \cref{utheta}}{\leq} \min_{u \in K_0} \left\{ F(\un) + \theta \langle F'(\un), u_{\theta} - \un \rangle + \frac{L\theta^q}{q} \| u_{\theta} - \un \|^q  + \theta G \left( u_{\theta} \right) + (1- \theta) G(\un) \right\} \\
&\stackrel{\textrm{(c)}}{\leq} \min_{u \in K_0} \left\{ (1-\theta)E(\un) + \theta E(u_{\theta}) + \frac{L \theta^q}{q} \| u_{\theta} - \un \|^q \right\} \\
&\stackrel{\cref{utheta2}}\leq \mint \left\{ (1-\theta) E(\un ) + \theta E \left( \frac{t}{\theta} u^* + \left(1-\frac{t}{\theta} \right)\un \right) + \frac{Lt^q}{q} \| u^* - \un \|^q  \right\} \\
&\stackrel{\textrm{(d)}}{\leq} \mint \left\{ E(\un) - t\zeta_n + \frac{Lt^q}{q} R_0^q \right\},
\end{split} $} \end{equation}
where (a), (b) are because of \cref{Ass:gradient}, (c) is due to the convexity of $F$, and (d) is due to the convexity of $E$.
Invoking \cref{Lem:q_min}, we have
\begin{equation*}
E(\unn) \leq E(\un) - \theta \left( 1 - \frac{1}{q} \right) \zeta_n
\end{equation*}
if $\zeta_n \geq \theta^{q-1}LR_0^q$, which is equivalent to
\begin{equation*}
\zeta_{n+1} \leq \left( 1 - \theta \left( 1 - \frac{1}{q} \right)\right) \zeta_n.
\end{equation*}
Otherwise, if $\zeta_n < \theta^{q-1} LR_0^q$, we get
\begin{equation*}
E(\unn) \leq E(\un) - \frac{1}{(LR_0^q)^{\frac{1}{q-1}}}\left( 1- \frac{1}{q}  \right) \zeta_n^{\frac{q}{q-1}}.
\end{equation*}
Hence, we have
\begin{equation*}
\zeta_n - \zeta_{n+1} \geq \frac{1}{(LR_0^q)^{\frac{1}{q-1}}}\left( 1- \frac{1}{q}  \right) \zeta_n^{\frac{q}{q-1}}.
\end{equation*}
Invoking \cref{Lem:recur} yields
\begin{equation*}
\zeta_{n} \leq \frac{1}{(n+1)^{q-1}} \max \left\{ \zeta_0 , \left( \frac{q(2^{q-1}-1)}{q-1} \right)^{q-1} L R_0^q \right\}, \quad n \geq 0.
\end{equation*}
Since $\zeta_0 < \theta^{q-1} LR_0^q$, setting
\begin{equation}
\label{Cq}
C_{q,\theta} = \left( \max \left\{ \theta, \left( \frac{q(2^{q-1}-1)}{q-1} \right) \right\} \right)^{q-1}
\end{equation}
completes the proof.
\end{proof}

\subsection{Proof of \cref{Thm:gradient_uniform}}
\label{App:proof_gradient_uniform}
The proof of \cref{Thm:gradient_uniform} is done with a similar argument to~\cite[Theorem~5]{Nesterov:2013}, where the convergence analysis for $p=q=2$ was given.

\begin{proof}[Proof of~\cref{Thm:gradient_uniform}]
Again, we denote $E(u^{(n)}) - E(u^*)$ by $\zeta_n$.
By the same way as~\cref{gradient1}, one can obtain
\begin{equation}
\label{gradient_uniform1}
E(u^{(n+1)}) \leq \mint \left\{E(\un) - t \zeta_n + \frac{Lt^q}{q} \| u^* - \un \|^q \right\}.
\end{equation}
By~\cref{Ass:sharp}, we have
\begin{equation}
\label{gradient_uniform2} 
\| u^* - \un \|^q \leq \left( \frac{p}{\mu} \zeta_n \right)^{\frac{q}{p}}.
 \end{equation}
Combining~\cref{gradient_uniform1,gradient_uniform2} yields
\begin{equation}
\label{gradient_uniform3}
E(\unn) \leq \mint \left\{ E(\un) - t \zeta_n + \frac{p^{\frac{q}{p}}Lt^q}{q\mu^{\frac{q}{p}}} \zeta_n^{\frac{q}{p}} \right\}.
\end{equation}

We consider the two cases $p = q$ and $p > q$ separately.
First, we assume that $p = q$.
Then~\cref{gradient_uniform3} is simplified to
\begin{equation*}
\zeta_{n+1} \leq \mint \left(1-t + \frac{Lt^q}{\mu} \right) \zeta_n.
\end{equation*}
By \cref{Lem:q_min}, if $\mu \geq \theta^{q-1}qL$, then we get
\begin{equation}
\label{gradient_uniform4}
\zeta_{n+1} \leq \left( 1- \theta \left( 1 - \frac{1}{q} \right) \right) \zeta_n.
\end{equation}
Otherwise, if $\mu < \theta^{q-1}qL$, we have
\begin{equation}
\label{gradient_uniform5}
\zeta_{n+1} \leq \left( 1 - \left( 1 - \frac{1}{q} \right) \left( \frac{\mu}{qL} \right)^{\frac{1}{q-1}} \right) \zeta_n.
\end{equation}
Recursive applications of~\cref{gradient_uniform4,gradient_uniform5} yield the desired results for the case $p = q$.

Next, we consider the case $p > q$.
Let $r = \frac{q}{p} < 1$.
By \cref{Lem:q_min}, if
\begin{equation*}
\zeta_n^{1-r} \geq \frac{\theta^{q-1} p^r L}{\mu^r},
\end{equation*}
one can obtain from~\cref{gradient_uniform3} that
\begin{equation*}
\zeta_{n+1} \leq \left( 1- \theta \left( 1- \frac{1}{q} \right)\right) \zeta_n.
\end{equation*}
Otherwise, it follows that
\begin{equation*}
\zeta_{n+1} \leq \zeta_n - \left( 1- \frac{1}{q} \right) \left( \frac{\mu^{r}}{p^r L}\right)^{\frac{1}{q-1}} \zeta_n^{\frac{q(p-1)}{p(q-1)}}.
\end{equation*}
Application of \cref{Lem:recur} yields
\begin{equation*}
\zeta_n \leq \frac{1}{(n+1)^{\beta}} \max \left\{ \zeta_0 , \left( \frac{q (2^{\beta} - 1)}{q-1} \right)^{\beta} \left( \frac{p^r L}{\mu^r} \right)^{\frac{1}{1-r}} \right\},
\end{equation*}
where $\beta = \frac{p(q-1)}{p-q}$.
Since $\zeta_0 \leq \theta^{\frac{q-1}{1-r}}\left( \frac{p^r L}{\mu^r}\right)^{\frac{1}{1-r}}$, we conclude that
\begin{equation*}
\zeta_n \leq \frac{C_{p,q,\theta}(L/\mu^r )^{\frac{1}{1-r}}}{(n+1)^{\beta}}
\end{equation*}
with
\begin{equation}
\label{Cpq}
C_{p,q,\theta} = p^{\frac{q}{p-q}}\left( \max \left\{ \theta, \left( \frac{q (2^{\frac{p(q-1)}{p-q}} - 1)}{q-1} \right) \right\} \right)^{\frac{p(q-1)}{p-q}}.
\end{equation}
This completes the proof.
\end{proof}

\bibliographystyle{siamplain}
\bibliography{refs_ASM_GD}
\end{document}